\newlength{\defbaselineskip}
\newtheorem{theorem}{Theorem}[section]
\newtheorem{example}{Example}[section]
\newtheorem{lemma}{Lemma}[section]
\newtheorem{remark}{Remark}[section]
\numberwithin{equation}{section}
\newtheorem{corollary}{Corollary}[section]
\begin{document}
\title{Approximation on Durrmeyer modification of generalized Sz$\acute{\text{a}}$sz-Mirakjan operators
}
\maketitle
\begin{center}
{\bf Rishikesh Yadav$^{1,\dag}$,  Ramakanta Meher$^{1,\star}$,  Vishnu Narayan Mishra$^{2,\circledast}$}\\
$^{1}$Applied Mathematics and Humanities Department,
Sardar Vallabhbhai National Institute of Technology Surat, Surat-395 007 (Gujarat), India.\\
$^{2}$Department of Mathematics, Indira Gandhi National Tribal University, Lalpur, Amarkantak-484 887, Anuppur, Madhya Pradesh, India\\
\end{center}
\begin{center}
$^\dag$rishikesh2506@gmail.com,  $^\star$meher\_ramakanta@yahoo.com,
 $^\circledast$vishnunarayanmishra@gmail.com
\end{center}

\vskip0.5in

\begin{abstract}
This paper deals with the approximations of Durrmeyer type generalization of Sz$\acute{\text{a}}$sz-Mirakjan operators. We establish the direct results, quantitative Voronovskaya type theorem, Gr$\ddot{\text{u}}$ss type theorem, $A$-statistical convergence, rate of convergence in terms of the function with derivative of bounded variation. At last, the graphical analysis, comparison study and numerical representations of proposed operators are discussed.


\end{abstract}
\subjclass \textbf{MSC 2010}: {41A25, 41A35, 41A36}.


\textbf{Keywords:}  Sz$\acute{\text{a}}$sz-Mirakjan operators; modulus of continuity; Lipschitz function; statistical convergence; function of bounded variation. 

\section{Introduction}
In 1950, Sz$\acute{\text{a}}$sz \cite{OS} studied the approximation properties of generalization of Bernstein's operators on infinite interval and known as Sz$\acute{\text{a}}$sz-Mirakjan operators. After two decades, in 1977,  Jain and Pethe \cite{JP}, introduced new type of Sz$\acute{\text{a}}$sz-Mirakjan operators which are as follows:

\begin{eqnarray}\label{no1}
\mathcal{LO}_{n}^{[\alpha]}=\sum\limits_{i=0}^{\infty}(1+n\alpha)^{\frac{-x}{\alpha}}\left(\alpha+\frac{1}{n}\right)^{-i}\frac{x^{(i,-\alpha)}}{i!}f\left(\frac{i}{n}  \right),
\end{eqnarray}
where $x^{(i,-\alpha)}=x(x+\alpha)\cdots(x+(i-1)\alpha$, $x^{(0,-\alpha)}=1$ and the function $f$ is considered to be of exponential type such that $f(x)\leq C e^{Ax},~(x\geq 0$, $A>0)$, with positive constant $C$ and here $\alpha=\alpha_n,~n\in\mathbb{N}$ is  as $0\leq\alpha_n\leq\frac{1}{n}$. They determined the approximation properties of the said operators. For $\alpha=\frac{1}{n}$, the above operators (\ref{no1}) reduce to  the operators which have been defined by Agratini \cite{AO2}.
%
In 2007,  Abel and Ivan \cite{AI} replaced $\alpha$ by $\frac{1}{nc}$ in the above operators (\ref{no1}) and  obtained the generalized version operators, which are as:

\begin{eqnarray}\label{no2}
\mathcal{AO}_{n}^{c}=\sum\limits_{i=0}^{\infty}\left(\frac{c+1}{c}\right)^{-xnc} (1+c)^{-i} \binom{ncx+i-1}{i} f\left(\frac{i}{n}  \right),
\end{eqnarray}
where $c=c_n$, $n\in\mathbb{N}$ is restricted with a certain constant $\beta>0$ such that $c\geq \beta$. 
The main purpose to define the  above operators (\ref{no2}), was to investigate the local approximation properties  and to check the asymptotic behavior. Very recently, Dhamiza et al. \cite{DPD} defined Kantorovich variant of the operators (\ref{no1}), for the studying the local approximations properties and rate of convergence. For bounded and integrable function on $[0,\infty)$, the operators are defined by  
 \begin{eqnarray}
\mathcal{MO}_{n}^{\alpha}=\sum\limits_{i=0}^{\infty}(1+n\alpha)^{\frac{-x}{\alpha}}\left(\alpha+\frac{1}{n}\right)^{-i}\frac{x^{(i,-\alpha)}}{i!}\int\limits_{\frac{i}{n}}^{\frac{i+1}{n}} f(t)dt.
\end{eqnarray}
A special case, when $\alpha\to 0$, the above operators reduce to Sz$\acute{\text{a}}$sz-Mirakjan-Kantorovich  operators defined by Totik  \cite{vt}. The property of Kantorovich type operators are also discussed in \cite{RYRVN,RYMVN}. But for the Durremnyer point of view of the Sz$\acute{\text{a}}$sz-Mirakjan operators, in 1985, Mazhar and Totik \cite{SV}, modified the Sz$\acute{\text{a}}$sz-Mirakjan operators into summation integral type operators, which are defined by
\begin{eqnarray}\label{d1}
I_{n}(h;x)=n\sum\limits_{i=0}^{\infty}u_{n,i}(x)\int\limits_0^\infty u_{n,i}(t)h(t)~dt,
\end{eqnarray}
where $u_{n,i}(x)=e^{-nx}\frac{(nx)^i}{i!}$, and independently the related aaproximations properties have been discussed by Kasana et al. \cite{HSG}. In this regard, Gupta and Pant \cite{VRP} determined the rate of convergence and other approximations properties. Some approximations properties can be seen in \cite{MKMS}. Their modifications into Durrmeyer version and properties can be seen in various research articles, such as \cite{GKR,VGA,VG,KAP}. In 2016, Mishra et al. \cite{VNM} modified Sz$\acute{\text{a}}$sz-Mirakjan Durrmeyer operators using a positive sequence of functions to study the properties like simultaneous approximation, rate of convergence etc. The modified operators are as
\begin{eqnarray}\label{d2}
D_{n}(h;x)=d_n\sum\limits_{i=0}^{\infty}u_{d_n,i}(x)\int\limits_0^\infty u_{d_n,i}(t)h(t)~dt,
\end{eqnarray}
where, $d_n\to\infty$ as $n\to\infty$ be a positive sequence of real number which is strictly increasing as well as $d_1\geq 1$ and $u_{d_n,i}(x)=e^{-d_nx}\frac{(d_nx)^i}{i!}$. Moreover, the operators (\ref{d1}) can be obtained when $d_n=n$ in the the operators (\ref{d2}).




Motivated by above works, we define the Durrmeyer modification of the above operators (\ref{no1}) by considering the function as  integrable and bounded on the interval $[0,\infty)$ as follows:


\begin{eqnarray}\label{O1}
\mathcal{U}_{n}^{[\alpha]}(f;x)=n\sum\limits_{i=0}^{\infty}(1+n\alpha)^{\frac{-x}{\alpha}}\left(\alpha+\frac{1}{n}\right)^{-i}\frac{x^{(i,-\alpha)}}{i!}\int\limits_{0}^{\infty}e^{-n u}\frac{(n u)^i}{i!} f(u)~du.
\end{eqnarray}
For $\alpha\to 0$, the above operators will  be reduced into Sz$\acute{\text{a}}$sz-Mirakjan-Durrmeyer operators, which are defined by equation (\ref{d1}).\\
The main motive of this article is to investigate the approximation properties of the defined operators (\ref{O1}). Therefore, we divide it into sections. The rate of convergence of the defined operators is obtained in the terms of modulus of continuity, second order modulus of continuity with the relations of  Peetre's $K$-functionals in the section  \ref{sec2}. Section \ref{sec3} consists, weighted approximations properties including convergence of the operators using weight function. To determine the properties of the operators (\ref{O1}) via quantitatively, quantitative Voronovskaya type theorem and  Gr$\ddot{\text{u}}$ss Voronovskaya type theorem are studied in section \ref{sec4}. In section \ref{sec5}, graphical and numerical representations are presented for the support of approximation results. Section \ref{sec6} represent $A$-statistical properties of the operators and in section (\ref{sec7}), an important property is studied for the rate of the convergence by means of the derivative of bounded variations. Finally, conclusion, result discussion and  applications are discussed. 

\section{Preliminaries}
This section contains, basic lemmas, remark and theorem, which are used to prove our main theorems and study the approximations properties of the proposed operators. Here, we need following lemma.

\begin{lemma}\label{l2}
Following results hold for all $n\in \mathbb{N}$:
\begin{eqnarray*}
\mathcal{U}_{n}^{[\alpha]}(1;x)&=& 1\\
\mathcal{U}_{n}^{[\alpha]}(t;x)&=& \frac{1+nx}{n}\\
\mathcal{U}_{n}^{[\alpha]}(t^2;x)&=& \frac{2+4nx+n^2x^2+n^2x\alpha}{n^2}\\
\mathcal{U}_{n}^{[\alpha]}(t^3;x)&=& \frac{6+18nx+9n^2x(x+\alpha)+n^3x(x^2+3x\alpha+2\alpha^2)}{n^3}\\
\mathcal{U}_{n}^{[\alpha]}(t^3;x)&=& \frac{24+96nx+72n^2x(x+\alpha)+16n^3x(x^2+3x\alpha+2\alpha^2)+n^4x(x^3+6x^2\alpha+11x\alpha^2+6\alpha^3)}{n^4}.
\end{eqnarray*}
\begin{remark}
Here
\begin{eqnarray}\label{s}
\int\limits_{0}^{\infty}e^{-n u}\frac{(n u)^i}{i!} u^m~du=\frac{1}{n^{m+1}}\frac{(i+m)!}{i!},
\end{eqnarray}
in particular, if $m=0$ then 
\begin{eqnarray*}
\int\limits_{0}^{\infty}e^{-n u}\frac{(n u)^i}{i!} ~du=\frac{1}{n},
\end{eqnarray*}
if $m=1$ then 
\begin{eqnarray*}
\int\limits_{0}^{\infty}e^{-n u}\frac{(n u)^i}{i!} u ~du=\frac{(i+1)}{n^2},
\end{eqnarray*}

and so on...
\end{remark}

\begin{remark}
Notice that
\begin{eqnarray}\label{A}
(1+n\alpha)^{\frac{x}{\alpha}}=\sum\limits_{i=0}^{\infty}\left(\alpha+\frac{1}{n}\right)^{-i}\frac{x^{(i,-\alpha)}}{i!}.
\end{eqnarray}

\end{remark}
\begin{proof}
Using the equations (\ref{s}) and (\ref{A}), we get
\begin{eqnarray*}
\mathcal{U}_{n}^{[\alpha]}(1;x)&=&\sum\limits_{i=0}^{\infty}(1+n\alpha)^{\frac{-x}{\alpha}}\left(\alpha+\frac{1}{n}\right)^{-i}\frac{x^{(i,-\alpha)}}{i!}\\
&=& (1+n\alpha)^{\frac{-x}{\alpha}}\sum\limits_{i=0}^{\infty} \left(\alpha+\frac{1}{n}\right)^{-i}\frac{x^{(i,-\alpha)}}{i!}\\
&=& (1+n\alpha)^{\frac{-x}{\alpha}} (1+n\alpha)^{\frac{x}{\alpha}}=1\\
\mathcal{U}_{n}^{[\alpha]}(u;x)&=& \sum\limits_{i=0}^{\infty}(1+n\alpha)^{\frac{-x}{\alpha}}\left(\alpha+\frac{1}{n}\right)^{-i}\frac{x^{(i,-\alpha)}}{i!} \frac{i+1}{n}\\
&=&\frac{(1+n\alpha)^{\frac{-x}{\alpha}}}{n}\sum\limits_{i=1}^{\infty}\left(\alpha+\frac{1}{n}\right)^{-i}\frac{x^{(i,-\alpha)}}{i!} i+ \frac{1}{n}\sum\limits_{i=1}^{\infty}(1+n\alpha)^{\frac{-x}{\alpha}}\left(\alpha+\frac{1}{n}\right)^{-i}\frac{x^{(i,-\alpha)}}{i!}\\
&=& x+\frac{1}{n}.
\end{eqnarray*}
\end{proof}

\end{lemma}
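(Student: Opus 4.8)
The plan is to reduce the entire lemma to the two identities recorded in the Remarks, namely the Gamma-integral evaluation (\ref{s}) and the generating identity (\ref{A}). Fix $m\in\{0,1,2,3,4\}$ and apply (\ref{s}) to the inner integral: since $\int_0^\infty e^{-nu}\frac{(nu)^i}{i!}u^m\,du=\frac{(i+m)!}{i!\,n^{m+1}}$ (itself just the Euler integral $\int_0^\infty e^{-nu}u^{i+m}\,du=(i+m)!/n^{i+m+1}$), the external factor $n$ in (\ref{O1}) cancels one power of $n$ and one is left with
\[
\mathcal{U}_{n}^{[\alpha]}(t^{m};x)=\frac{(1+n\alpha)^{-x/\alpha}}{n^{m}}\sum_{i=0}^{\infty}q^{i}\,\frac{x^{(i,-\alpha)}}{i!}\,\frac{(i+m)!}{i!},\qquad q:=\Bigl(\alpha+\tfrac1n\Bigr)^{-1}.
\]
Thus each moment is governed by a single discrete sum, and the cases $m=0,1$ are precisely the two computations already displayed after the statement.

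The heart of the argument is the evaluation of the factorial moments of the discrete part. I claim that for every integer $k\ge0$,
\[
(1+n\alpha)^{-x/\alpha}\sum_{i=0}^{\infty}q^{i}\,\frac{x^{(i,-\alpha)}}{i!}\,i(i-1)\cdots(i-k+1)=n^{k}\,x^{(k,-\alpha)} .
\]
To see this I use the factorization $x^{(i,-\alpha)}=x^{(k,-\alpha)}\,(x+k\alpha)^{(i-k,-\alpha)}$ and the cancellation $\frac{i(i-1)\cdots(i-k+1)}{i!}=\frac1{(i-k)!}$; shifting the summation index by $k$ pulls out $q^{k}x^{(k,-\alpha)}$ and turns the remaining series into (\ref{A}) with $x$ replaced by $x+k\alpha$, summing to $(1-q\alpha)^{-(x+k\alpha)/\alpha}$. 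Since $q=(\alpha+\tfrac1n)^{-1}$ gives $1-q\alpha=(1+n\alpha)^{-1}$ and $q(1+n\alpha)=n$, the transcendental factors telescope and leave exactly $n^{k}x^{(k,-\alpha)}$. (Probabilistically this is the statement that the discrete weights form a negative-binomial law whose $k$-th factorial moment is $n^{k}x^{(k,-\alpha)}$.)

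With the factorial moments in hand, the computation is finished by expanding the rising factorial $\frac{(i+m)!}{i!}=(i+1)(i+2)\cdots(i+m)$ in the falling-factorial basis $\{\,i(i-1)\cdots(i-k+1)\,\}_{k=0}^{m}$ (equivalently, via Stirling numbers), applying the displayed factorial-moment identity term by term, and dividing by $n^{m}$. For example, for $m=2$ one has $(i+1)(i+2)=i(i-1)+4i+2$, giving $\mathcal{U}_{n}^{[\alpha]}(t^{2};x)=\frac1{n^{2}}\bigl(n^{2}x^{(2,-\alpha)}+4nx+2\bigr)$, which is the stated $\frac{2+4nx+n^{2}x^{2}+n^{2}x\alpha}{n^{2}}$ since $x^{(2,-\alpha)}=x^2+x\alpha$. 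The same template yields $m=3,4$.

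The only genuine difficulty is bookkeeping, not conceptual: for $m=3$ and especially $m=4$ one must correctly re-expand $(i+1)\cdots(i+m)$ into falling factorials, substitute $x^{(k,-\alpha)}=x(x+\alpha)\cdots(x+(k-1)\alpha)$, and combine the many monomials in $n,x,\alpha$ over the common denominator $n^{m}$ into the claimed closed forms. I would carry out $m=2$ as a model, then $m=3,4$, checking each answer by two independent tests: the coefficient of the top power $x^{m}$ must equal $1$ (it comes solely from the $k=m$ term $n^{m}x^{(m,-\alpha)}$), and setting $\alpha\to0$ must reproduce the known Sz\'asz--Mirakjan--Durrmeyer moments of the operators (\ref{d1}). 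I also note that the statement lists $\mathcal{U}_{n}^{[\alpha]}(t^{3};x)$ twice; the second line is in fact the $t^{4}$ moment and should be relabelled accordingly.
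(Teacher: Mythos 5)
Your proposal is correct, and while it starts from the same two ingredients as the paper --- the Gamma--integral evaluation (\ref{s}) and the generating identity (\ref{A}) --- it then takes a genuinely more complete route. The paper's own proof only treats $m=0$ and $m=1$: it splits $\frac{i+1}{n}$ into $\frac{i}{n}+\frac{1}{n}$ and simply asserts the value $x+\frac{1}{n}$, never showing how a sum such as $\sum_i i\,r_{n,i}^{[\alpha]}(x)$ is evaluated, and it omits $m=2,3,4$ entirely (the companion Lemma \ref{l4} likewise declares its proof omitted). Your key addition is the closed form for \emph{all} factorial moments of the discrete weights,
\[
(1+n\alpha)^{-x/\alpha}\sum_{i\ge 0} q^{i}\,\frac{x^{(i,-\alpha)}}{i!}\,i(i-1)\cdots(i-k+1)=n^{k}x^{(k,-\alpha)},\qquad q=\Bigl(\alpha+\tfrac{1}{n}\Bigr)^{-1},
\]
proved via the factorization $x^{(i,-\alpha)}=x^{(k,-\alpha)}(x+k\alpha)^{(i-k,-\alpha)}$, an index shift, and (\ref{A}) evaluated at $x+k\alpha$, using $1-q\alpha=(1+n\alpha)^{-1}$ and $q(1+n\alpha)=n$; this identity is exactly the step the paper leaves implicit even in its $m=1$ case. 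Combined with the expansion of $(i+1)\cdots(i+m)$ in falling factorials, it makes all five formulas mechanical, and the coefficients you quote check out: $(i+1)(i+2)=i(i-1)+4i+2$ gives the stated second moment, $(i+1)(i+2)(i+3)=i(i-1)(i-2)+9i(i-1)+18i+6$ the third, and $24,96,72,16,1$ the fourth. What each approach buys: the paper's argument is shorter but leaves unproved precisely the higher moments that are used later (the second moment for the direct results, the fourth for the Voronovskaya-type analysis), whereas yours supplies a uniform, verifiable mechanism for every $m$; you also correctly flag that the last displayed formula is the $t^{4}$ moment mislabelled as $t^{3}$.
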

Consider the $\Theta_{n,m}^{[\alpha]}(x)=\mathcal{U}_{n}^{[\alpha]}((t-x)^m;x)$, $m=1,2,3$ are the central moments and here we obtain the following the results.
\begin{lemma}\label{l4}
For each $x\geq 0$ and $n\in\mathbb{N}$, it holds:
\begin{eqnarray*}
\Theta_{n,1}^{[\alpha]}(x)&=& \frac{1}{n}\\
\Theta_{n,2}^{[\alpha]}(x)&=&\frac{\alpha n^2 x+2 n x+2}{n^2} \\
\Theta_{n,3}^{[\alpha]}(x)&=& \frac{2 \alpha ^2 n^3 x+9 \alpha  n^2 x+12 n x+6}{n^3}\\
\Theta_{n,4}^{[\alpha]}(x)&=&\frac{3 \alpha ^2 n^4 x (2 \alpha +x)+4 \alpha  n^3 x (8 \alpha +3 x)+12 n^2 x (6 \alpha +x)+72 n x+24}{n^4}
\end{eqnarray*}
\end{lemma}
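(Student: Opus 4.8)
The plan is to reduce each central moment to the ordinary (monomial) moments already computed in Lemma~\ref{l2}, using only the linearity of $\mathcal{U}_{n}^{[\alpha]}$. For fixed $x$ I expand the power $(t-x)^m$ by the binomial theorem and apply the operator termwise, giving
\begin{equation*}
\Theta_{n,m}^{[\alpha]}(x)=\mathcal{U}_{n}^{[\alpha]}\big((t-x)^{m};x\big)=\sum_{j=0}^{m}\binom{m}{j}(-x)^{\,m-j}\,\mathcal{U}_{n}^{[\alpha]}(t^{j};x).
\end{equation*}
Since Lemma~\ref{l2} supplies $\mathcal{U}_{n}^{[\alpha]}(t^{j};x)$ for $j=0,1,2,3,4$ (with $\mathcal{U}_{n}^{[\alpha]}(1;x)=1$), every summand on the right is an explicit polynomial in $x$, and the proof becomes purely algebraic.

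First I would dispatch the simplest case $m=1$: substituting $\mathcal{U}_{n}^{[\alpha]}(t;x)=x+\tfrac1n$ and $\mathcal{U}_{n}^{[\alpha]}(1;x)=1$ gives $\Theta_{n,1}^{[\alpha]}(x)=(x+\tfrac1n)-x=\tfrac1n$, so no $x$- or $\alpha$-dependence survives. For $m=2$ I form $\mathcal{U}_{n}^{[\alpha]}(t^2;x)-2x\,\mathcal{U}_{n}^{[\alpha]}(t;x)+x^2$; the quadratic terms in $x$ cancel and the result collapses to $\tfrac{\alpha n^2x+2nx+2}{n^2}$. The key observation making these cancellations systematic is that each monomial moment $\mathcal{U}_{n}^{[\alpha]}(t^{j};x)$ is a degree-$j$ polynomial in $x$ with leading coefficient $1$; hence the coefficient of the top power $x^{m}$ in $\Theta_{n,m}^{[\alpha]}(x)$ equals $\sum_{j=0}^{m}\binom{m}{j}(-1)^{m-j}=(1-1)^{m}=0$, so the highest-order term always drops out.

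For $m=3$ and $m=4$ the mechanism is identical, only with more terms to track. I would write out $\mathcal{U}_{n}^{[\alpha]}(t^3;x)-3x\,\mathcal{U}_{n}^{[\alpha]}(t^2;x)+3x^2\,\mathcal{U}_{n}^{[\alpha]}(t;x)-x^3$ and the analogous five-term alternating sum for $m=4$, substitute the explicit moments from Lemma~\ref{l2}, and collect powers of $x$ over the common denominator $n^{m}$. In both cases more than just the single top power cancels: for $m=3$ the $x^3$ and $x^2$ contributions vanish, leaving a polynomial of degree one in $x$, and for $m=4$ the $x^4$ and $x^3$ contributions vanish, leaving the quadratic expression $\tfrac{3\alpha^2 n^4 x(2\alpha+x)+4\alpha n^3 x(8\alpha+3x)+12n^2x(6\alpha+x)+72nx+24}{n^4}$.

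The main obstacle is therefore not conceptual but purely the bookkeeping of the degree-four case, where the quartic moment mixes the constants $24,96,72,16$ with the product $x(x^3+6x^2\alpha+11x\alpha^2+6\alpha^3)$; after subtracting $4x$ times the cubic moment, adding $6x^2$ times the quadratic moment, and so on, one must verify that the coefficients of $x^{4}$ and $x^{3}$ really cancel (a short binomial computation as above) and that the remaining terms assemble into the stated closed form. I would guard against arithmetic slips by checking the final answers for internal consistency against the lower-order moments $\Theta_{n,1}^{[\alpha]}$, $\Theta_{n,2}^{[\alpha]}$, $\Theta_{n,3}^{[\alpha]}$ and against the expected limiting behaviour as $\alpha\to0$.
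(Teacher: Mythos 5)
Your proposal is correct and is exactly the argument the paper intends: the paper's proof of Lemma~\ref{l4} simply says the result follows from Lemma~\ref{l2} and omits the computation, which is precisely the binomial expansion of $(t-x)^m$, linearity, and substitution of the monomial moments that you carry out (including the correct reading of the last line of Lemma~\ref{l2}, which is the fourth moment despite being mislabeled $t^3$). Your verified cancellations for $m=1,2,3,4$ fill in the algebra the paper leaves to the reader.
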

\begin{proof}
Using the Lemma \ref{l2}, we can easily prove all parts of the above lemma, so we omit the proof. 
\end{proof}

\begin{remark}
For $x\in[0,\infty)$ and for $n\in\mathbb{N}$, we obtain
\begin{eqnarray*}
\Theta_{n,2}^{[\alpha]}(x)&=& \frac{\alpha n^2 x+2 n x+2}{n^2}=\alpha x+\frac{2x}{n}+\frac{2}{n^2}\\
&\leq & \frac{3x}{n}+\frac{2}{n^2}=\frac{3}{n}\left(x+\frac{1}{n}\right)=\frac{3}{n}\eta_n^2(x)
\end{eqnarray*}
\end{remark}

\begin{remark}
The above operators \ref{O1} can be written as
\begin{eqnarray*}
\mathcal{U}_{n}^{[\alpha]}(f;x)=\int_0^\infty u_{n}^{[\alpha]}  (x,t)f(t)~dt
\end{eqnarray*}
where $u_{n}^{[\alpha]}(x,t)=n\sum\limits_{i=0}^\infty r_{n,i}^{[\alpha]}(x)p_n(t)$, $r_{n,i}^{[\alpha]}(x)=(1+n\alpha)^{\frac{-x}{\alpha}}\left(\alpha+\frac{1}{n}\right)^{-i}\frac{x^{(i,-\alpha)}}{i!}$ and $p_n(x)=e^{-n x}\frac{(n x)^i}{i!}$.  
\end{remark}

\begin{lemma}\label{l1}
For every $x\geq 0$ and $\max\alpha=\frac{1}{n}$ then it holds:
\begin{eqnarray*}
\underset{n\to\infty}\lim\{n\Theta_{n,1}^{\alpha}(x)\}&=& 1,\\
\underset{n\to\infty}\lim\{n\Theta_{n,2}^{\alpha}(x)\}&=& 3x,\\
\underset{n\to\infty}\lim\{n^2\Theta_{n,4}^{\alpha}(x)\}&=& 27x^2,\\
\underset{n\to\infty}\lim\{n^3\Theta_{n,6}^{\alpha}(x)\}&=& 405x^3.\\
\end{eqnarray*}
\end{lemma}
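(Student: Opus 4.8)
The plan is to read off the first three limits directly from Lemma \ref{l4} and to treat the sixth-order central moment, which is not yet recorded, as the only genuinely new computation. Throughout I set $\alpha=\alpha_n=\tfrac1n$, so that $n\alpha\equiv 1$; this is the content of the hypothesis $\max\alpha=\tfrac1n$, and it is precisely the scaling that makes the limits finite and nonzero (for a general $\alpha$ with $n\alpha\to c$ one would obtain $c$ in place of $1$ in the leading terms below).

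For the first limit, $\Theta_{n,1}^{[\alpha]}(x)=\tfrac1n$ gives $n\Theta_{n,1}^{[\alpha]}(x)=1$ identically. For the second, substituting $\alpha=\tfrac1n$ into $\Theta_{n,2}^{[\alpha]}(x)=\alpha x+\tfrac{2x}{n}+\tfrac{2}{n^2}$ yields $\tfrac{3x}{n}+\tfrac{2}{n^2}$, so $n\Theta_{n,2}^{[\alpha]}(x)=3x+\tfrac2n\to 3x$. For the fourth-moment limit I substitute $\alpha=\tfrac1n$ into the expression for $\Theta_{n,4}^{[\alpha]}(x)$ in Lemma \ref{l4}; after collecting powers of $n$ the numerator becomes $27n^{2}x^{2}+182nx+24$, whence $n^{2}\Theta_{n,4}^{[\alpha]}(x)=27x^{2}+\tfrac{182x}{n}+\tfrac{24}{n^{2}}\to 27x^{2}$. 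These three steps are routine substitutions.

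The real work is $\Theta_{n,6}^{[\alpha]}(x)=\mathcal{U}_{n}^{[\alpha]}((t-x)^{6};x)$, for which I first need the raw moments up to order six. Here I would use the generating function of the discrete weights: writing $x^{(i,-\alpha)}=\alpha^{i}\,\tfrac{x}{\alpha}\bigl(\tfrac{x}{\alpha}+1\bigr)\cdots\bigl(\tfrac{x}{\alpha}+i-1\bigr)$ and summing the generalized binomial series gives the closed form
\[
\Phi(s):=\sum_{i=0}^{\infty}r_{n,i}^{[\alpha]}(x)\,s^{i}=\bigl(1+n\alpha(1-s)\bigr)^{-x/\alpha},
\]
which reduces to the identity (\ref{A}) at $s=1$. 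Differentiating $k$ times and setting $s=1$ produces the factorial moments $\Phi^{(k)}(1)=n^{k}x^{(k,-\alpha)}$. Combining this with the integral evaluation (\ref{s}), which yields $\mathcal{U}_{n}^{[\alpha]}(t^{m};x)=n^{-m}\sum_i r_{n,i}^{[\alpha]}(x)\,\tfrac{(i+m)!}{i!}$, I can express each raw moment $\mathcal{U}_{n}^{[\alpha]}(t^{m};x)$ for $m\le 6$ as a polynomial in the factorial moments, hence in closed form; the cases $m\le 4$ reproduce Lemma \ref{l2} and serve as a check.

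With the raw moments in hand, I expand $\Theta_{n,6}^{[\alpha]}(x)=\sum_{j=0}^{6}\binom{6}{j}(-x)^{6-j}\mathcal{U}_{n}^{[\alpha]}(t^{j};x)$, substitute $\alpha=\tfrac1n$, and collect powers of $n$. The main obstacle is the bookkeeping in this expansion: the top-order contributions in $x$ (the $x^{6},x^{5},x^{4}$ terms) must cancel completely, since $\sum_{j}\binom{6}{j}(-1)^{6-j}x^{6}=0$ and its analogues force it, so that the surviving leading term is of order $n^{-3}$. I expect that term to be $\tfrac{405x^{3}}{n^{3}}$, giving $n^{3}\Theta_{n,6}^{[\alpha]}(x)\to 405x^{3}$. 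As a consistency check, the three nontrivial limits $3x,\ 27x^{2}=3(3x)^{2},\ 405x^{3}=15(3x)^{3}$ are exactly the second, fourth and sixth Gaussian moments with variance parameter $3x$, reflecting the central-limit behaviour of the operators under the scaling $n\alpha=1$. Tracking those cancellations accurately down to the $O(n^{-3})$ level is the delicate part; everything else is direct substitution.
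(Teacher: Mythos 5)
Your proposal is correct in substance, but it cannot be ``the same as the paper's proof'' because the paper gives no proof of this lemma at all: it is stated bare, and neither $\Theta_{n,6}^{[\alpha]}$ nor the fifth and sixth raw moments appear anywhere in the text. For the first three limits your route is exactly what the paper implicitly intends --- substitute $\alpha=\frac{1}{n}$ into Lemma \ref{l4} --- and your arithmetic is right: the fourth-moment numerator does collapse to $27n^{2}x^{2}+182nx+24$. For the sixth moment, your machinery is sound and is the natural completion of the paper's Lemma \ref{l2}: the generating function $\Phi(s)=\bigl(1+n\alpha(1-s)\bigr)^{-x/\alpha}$ is correct (it follows from the generalized binomial series, since $x^{(i,-\alpha)}=\alpha^{i}\binom{x/\alpha+i-1}{i}\,i!$), the factorial-moment identity $\Phi^{(k)}(1)=n^{k}x^{(k,-\alpha)}$ checks out, and together with (\ref{s}) it yields every raw moment in closed form. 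The one respect in which your write-up is not yet a proof is the last step: you say you ``expect'' the surviving term to be $405x^{3}/n^{3}$ rather than deriving it, and the Gaussian-moment pattern, while a genuine consistency check, is not an argument. If you want to avoid the binomial-expansion bookkeeping entirely, note that for $\alpha=\frac{1}{n}$ the operator $\mathcal{U}_{n}^{[\alpha]}(f;x)$ is the expectation of $f$ under the law of a Gamma variable with shape $I+1$ and rate $n$, where $I$ is negative binomial with parameters $nx$ and $\frac{1}{2}$; all cumulants of this law satisfy $\kappa_{j}=O(n^{1-j})$ with $\kappa_{2}=\frac{3x}{n}+\frac{1}{n^{2}}$, and since the sixth central moment of a centered variable equals $15\kappa_{2}^{3}+15\kappa_{4}\kappa_{2}+10\kappa_{3}^{2}+\kappa_{6}$, while the shift $\mathcal{U}_{n}^{[\alpha]}(t;x)-x=\frac{1}{n}$ contributes only at order $n^{-4}$, one gets $n^{3}\Theta_{n,6}^{[\alpha]}(x)=15(3x)^{3}+O\bigl(\tfrac{1}{n}\bigr)\to 405x^{3}$ with almost no computation. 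Either that argument or an actual execution of your expansion is needed to finish; as written, the last limit is predicted, not proved.
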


\begin{lemma}
If a function $g$ defined on $[0,\infty)$ and bounded with supremum norm $\| f\|=\underset{x\geq0}\sup |f(x)|$ then there is an inequality holds as:
\begin{eqnarray*}
\left|\mathcal{U}_{n}^{[\alpha]}(f;x) \right|\leq \|f\|.
\end{eqnarray*}
\end{lemma}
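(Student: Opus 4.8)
The plan is to exploit the fact that $\mathcal{U}_{n}^{[\alpha]}$ is a positive linear operator that reproduces constants, so that the asserted bound is nothing more than the contraction property in the supremum norm that such operators automatically enjoy. First I would write the operator in the kernel form recorded in the preceding remark, namely $\mathcal{U}_{n}^{[\alpha]}(f;x)=n\sum_{i=0}^{\infty} r_{n,i}^{[\alpha]}(x)\int_0^\infty p_n(u) f(u)\,du$, and apply the triangle inequality to pass the absolute value inside the summation and the integral.

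The key observation is the positivity of the kernel. For $x\ge 0$ every factor of $x^{(i,-\alpha)}=x(x+\alpha)\cdots(x+(i-1)\alpha)$ is nonnegative (recall the restriction $0\le \alpha_n\le \tfrac1n$), while $(1+n\alpha)^{-x/\alpha}$, $\left(\alpha+\tfrac1n\right)^{-i}$ and $1/i!$ are strictly positive; hence $r_{n,i}^{[\alpha]}(x)\ge 0$. Likewise the weight $p_n(u)=e^{-nu}(nu)^i/i!$ is nonnegative for $u\ge 0$. Consequently $|r_{n,i}^{[\alpha]}(x)|=r_{n,i}^{[\alpha]}(x)$ and $|p_n(u)|=p_n(u)$, so the triangle inequality produces $|\mathcal{U}_{n}^{[\alpha]}(f;x)|\le n\sum_{i=0}^{\infty} r_{n,i}^{[\alpha]}(x)\int_0^\infty p_n(u)\,|f(u)|\,du$.

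Next I would use the bound $|f(u)|\le \|f\|$ valid for every $u\ge 0$, factor the constant $\|f\|$ out of the integral and the sum, and recognize that the remaining quantity is precisely $\mathcal{U}_{n}^{[\alpha]}(1;x)$. By the first identity of Lemma \ref{l2} this equals $1$, which yields $|\mathcal{U}_{n}^{[\alpha]}(f;x)|\le \|f\|\,\mathcal{U}_{n}^{[\alpha]}(1;x)=\|f\|$, as claimed.

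There is no serious obstacle in this argument; the only point meriting a word of care is the interchange of the absolute value with the infinite series and the improper integral. This is legitimate because $f$ is bounded, so the dominating expression $\|f\|\,n\sum_{i} r_{n,i}^{[\alpha]}(x)\int_0^\infty p_n(u)\,du$ converges (indeed to $\|f\|$), and the resulting absolute convergence justifies both the term-by-term estimate and the Tonelli interchange of sum and integral. Once positivity together with the normalization $\mathcal{U}_{n}^{[\alpha]}(1;x)=1$ are in hand, the contraction bound is immediate.
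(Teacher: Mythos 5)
Your proof is correct. The paper in fact states this lemma without any proof, and your argument --- nonnegativity of the kernel $r_{n,i}^{[\alpha]}(x)\,p_n(u)$, the pointwise bound $|f(u)|\le\|f\|$, and the normalization $\mathcal{U}_{n}^{[\alpha]}(1;x)=1$ from Lemma \ref{l2} --- is exactly the standard contraction argument the paper implicitly relies on, so there is nothing to compare beyond noting that you have supplied the details (including the justification for interchanging the absolute value with the sum and integral) that the authors omitted.
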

\begin{theorem}\label{th1}
Consider $g\in C_A[0,\infty)$, $\alpha\in[0,\frac{1}{n}]$ and $\alpha\to 0$ as $n\to\infty$ then we get 
\begin{eqnarray*}
\underset{n\to\infty}\lim\mathcal{U}_{n}^{[\alpha]}(g;x)=g(x),
\end{eqnarray*}
uniformly on each finite interval of $[0,\infty)$. 
\end{theorem}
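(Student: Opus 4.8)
The plan is to derive Theorem~\ref{th1} from the classical Korovkin theorem for positive linear operators. First I would observe that each $\mathcal{U}_{n}^{[\alpha]}$ is a positive linear operator: the coefficients $r_{n,i}^{[\alpha]}(x)=(1+n\alpha)^{\frac{-x}{\alpha}}\left(\alpha+\frac{1}{n}\right)^{-i}\frac{x^{(i,-\alpha)}}{i!}$ are nonnegative for $x\geq 0$, and the integration kernel $p_n(t)=e^{-nt}\frac{(nt)^i}{i!}$ is nonnegative, so $g\geq 0$ forces $\mathcal{U}_{n}^{[\alpha]}(g;x)\geq 0$. For $g\in C_A[0,\infty)$ with $|g(t)|\leq C e^{At}$, the defining integrals converge once $n>A$, because the kernel decays like $e^{-nt}$; hence $\mathcal{U}_{n}^{[\alpha]}(g;x)$ is well defined for all large $n$.

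The key step is to verify the three Korovkin test conditions on $e_0(t)=1$, $e_1(t)=t$, $e_2(t)=t^2$, for which Lemma~\ref{l2} supplies the exact values. One has $\mathcal{U}_{n}^{[\alpha]}(e_0;x)=1=e_0(x)$ identically, $\mathcal{U}_{n}^{[\alpha]}(e_1;x)=x+\frac{1}{n}\to x=e_1(x)$, and
\[
\mathcal{U}_{n}^{[\alpha]}(e_2;x)=x^2+\alpha x+\frac{4x}{n}+\frac{2}{n^2}\longrightarrow x^2=e_2(x)
\]
as $n\to\infty$, where the last limit uses the hypothesis $\alpha\to 0$. Each of these convergences is uniform on any bounded interval $[0,b]$, since the error terms are polynomials in $x$ with coefficients tending to $0$. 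Invoking the Korovkin theorem on $[0,b]$ then yields $\mathcal{U}_{n}^{[\alpha]}(g;x)\to g(x)$ uniformly on $[0,b]$, which is the claim for an arbitrary finite subinterval.

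The only genuine point requiring care is the interplay between the half-line domain and the exponential growth of $g$: the classical Korovkin theorem is stated on a compact interval, whereas here the operators integrate over all of $[0,\infty)$. I would handle this by passing to compact subsets, on which the test-function convergence above, together with positivity and linearity, reduces the statement to the compact Korovkin setting; the growth condition enters only through well-definedness of the integrals. An equivalent and perhaps cleaner route would bypass Korovkin entirely: for $g$ uniformly continuous on $[0,b]$ one can estimate $\bigl|\mathcal{U}_{n}^{[\alpha]}(g;x)-g(x)\bigr|$ directly via the modulus of continuity and the Cauchy--Schwarz inequality, controlling the remainder by the second central moment $\Theta_{n,2}^{[\alpha]}(x)=\alpha x+\frac{2x}{n}+\frac{2}{n^2}$ from Lemma~\ref{l4}, which tends to $0$ uniformly on $[0,b]$. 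Either way the main obstacle is bookkeeping rather than analytic depth; the substance of the argument is the explicit moment computation already recorded in Lemma~\ref{l2}.
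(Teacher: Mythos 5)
The paper states Theorem \ref{th1} without any proof at all (it appears in the preliminaries, unproved), so your proposal can only be judged on its own merits. Your Korovkin verification is correct as far as it goes: positivity is clear, and by Lemma \ref{l2} one has $\mathcal{U}_{n}^{[\alpha]}(1;x)=1$, $\mathcal{U}_{n}^{[\alpha]}(t;x)=x+\tfrac{1}{n}$, $\mathcal{U}_{n}^{[\alpha]}(t^2;x)=x^2+\alpha x+\tfrac{4x}{n}+\tfrac{2}{n^2}$, all converging uniformly on $[0,b]$ when $\alpha\le\tfrac1n$. The genuine gap is exactly at the point you wave away: for $g\in C_A[0,\infty)$, i.e. $|g(t)|\le Ce^{At}$, the growth condition does \emph{not} enter ``only through well-definedness of the integrals.'' Since $\mathcal{U}_{n}^{[\alpha]}(g;x)$ for $x\in[0,b]$ depends on the values of $g$ on all of $[0,\infty)$, the operators are not endomorphisms of $C[0,b]$, and the compact Korovkin theorem cannot be invoked as a black box. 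The standard reduction $|g(t)-g(x)|\le\varepsilon+2\|g\|\delta^{-2}(t-x)^2$ works only for bounded $g$; convergence of moments of order $\le 2$ cannot dominate an exponentially growing tail. A concrete counterexample: the positive operators $L_n(f;x)=f(x)+e^{-n}f(n)$ satisfy all three Korovkin conditions uniformly on compacts, yet $L_n(e^{2t};x)=e^{2x}+e^{n}\to\infty$. Your alternative route (modulus of continuity plus Cauchy--Schwarz against $\Theta_{n,2}^{[\alpha]}$) has the same defect, since $\omega(g,\delta)$ taken over $[0,\infty)$ is infinite for unbounded $g$.

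What is missing is a tail estimate. For $x\in[0,b]$ split the integral at some $c>b$: the near part is handled by uniform continuity of $g$ on $[0,c]$ together with $\Theta_{n,2}^{[\alpha]}(x)\to 0$ uniformly on $[0,b]$, while the far part is bounded via Cauchy--Schwarz by $\bigl(\mathcal{U}_{n}^{[\alpha]}(e^{2At};x)\bigr)^{1/2}\bigl(\mathcal{U}_{n}^{[\alpha]}(\chi_{\{|t-x|\ge c-b\}};x)\bigr)^{1/2}$, where the second factor is $O\bigl(\Theta_{n,2}^{[\alpha]}(x)^{1/2}\bigr)\to 0$ by Chebyshev's inequality. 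The exponential moment can be computed in closed form from \eqref{s} and the generating identity \eqref{A}: $\int_0^\infty e^{-nu}\tfrac{(nu)^i}{i!}e^{2Au}\,du=\tfrac{n^i}{(n-2A)^{i+1}}$, and summing the resulting series shows $\mathcal{U}_{n}^{[\alpha]}(e^{2At};x)$ is finite and uniformly bounded on $[0,b]$ once $n$ is sufficiently large (roughly $n>4A$ when $\alpha\le\tfrac1n$). With this lemma added, your argument closes; without it, the theorem is proved only for bounded continuous $g$, not for the class $C_A[0,\infty)$ actually asserted.
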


\section{Direct Results}\label{sec2}
This segment, consists the uniform convergence theorem, rate of the convergence of the proposed operators using second order modulus of smoothness and modulus of continuity and a relation exist with Peetre's $K$-functional.  \\

To study the approximation properties, we suppose $C_B[0,\infty)$ be the set of all continuous and bounded function $g$ defined on $[0,\infty)$ with supremum norm $\|g\|=\underset{x\geq0} \sup |g(x)|$. And here for any $\xi>0$, the Peetre's $K$-functional is defined by:
\begin{eqnarray*}
K_2(g;\xi)=\underset{g_1\in C_B^2[0,\infty)}\inf\{\|g-g_1\|+\xi \|g_1''\|\}, ~~\text{where}~C_B^2[0,\infty)=\{g\in C_B[0,\infty):g',g''\in C_B[0,\infty)\}.
\end{eqnarray*} 
In 1993, an important relation was introduced by De Vore \cite{q1} by considering Peetre's $K$-functional and second order modulus of smoothness, which is given below:
\begin{eqnarray}\label{pe1}
K_2(g;\xi)\leq M \omega_2(f;\sqrt{\xi}),
\end{eqnarray} 
where $ \omega_2(f;\sqrt{\xi})$ is second order modulus of smoothness and is defined by, 
\begin{eqnarray*}
\omega_2(f;\sqrt{\xi})=\underset{h\in[0,\sqrt{\xi}],x\in[0,\infty)}\sup \{|f(x+2h)-2f(x+h)+f(x)|\}, ~f\in C_B[0,\infty)
\end{eqnarray*}


\textbf{Note.} The first order modulus of continuity is $\omega(f;\xi)=\underset{h\in[0,\xi],x\in[0,\infty)}\sup \{|f(x+h)-f(x)|\}, ~f\in C_B[0,\infty)$.\\
\begin{theorem}
Consider $f\in C_B[0,\infty)$ and for every $x\in[0,\infty)$, it holds as:
\begin{eqnarray*}
 \left|\mathcal{U}_{n}^{[\alpha]}(f;x)-f(x)\right|\leq M \omega_2(f;\sqrt{\rho_n(x)})+\omega\left(f;\nu_n(x) \right),
\end{eqnarray*}
where, $\nu_n(x)=\Theta_{n,1}^{[\alpha]}(x)$ and $\rho_n(x)=\Theta_{n,2}^{[\alpha]}(x)+\frac{1}{n^2}$.
\end{theorem}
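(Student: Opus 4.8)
The plan is to follow the classical Peetre $K$-functional route, the one wrinkle being that $\mathcal{U}_{n}^{[\alpha]}$ fails to reproduce linear functions: by Lemma~\ref{l2} one has $\mathcal{U}_{n}^{[\alpha]}(t;x)=x+\tfrac1n$, so $\Theta_{n,1}^{[\alpha]}(x)=\tfrac1n\neq0$. To repair this I would introduce the auxiliary operator
\[
\tilde{\mathcal{U}}_{n}^{[\alpha]}(f;x)=\mathcal{U}_{n}^{[\alpha]}(f;x)+f(x)-f\!\left(x+\tfrac1n\right),
\]
and check from Lemma~\ref{l2} that $\tilde{\mathcal{U}}_{n}^{[\alpha]}(1;x)=1$ and $\tilde{\mathcal{U}}_{n}^{[\alpha]}(t;x)=x$, so that $\tilde{\mathcal{U}}_{n}^{[\alpha]}(t-x;x)=0$.

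First I would estimate the auxiliary operator on a smooth test function $g\in C_B^2[0,\infty)$. Writing Taylor's formula with integral remainder, $g(t)=g(x)+(t-x)g'(x)+\int_x^t(t-u)g''(u)\,du$, and applying $\tilde{\mathcal{U}}_{n}^{[\alpha]}$, the constant and linear terms are reproduced exactly, leaving
\[
\tilde{\mathcal{U}}_{n}^{[\alpha]}(g;x)-g(x)=\mathcal{U}_{n}^{[\alpha]}\!\left(\int_x^t(t-u)g''(u)\,du;x\right)-\int_x^{x+1/n}\!\left(x+\tfrac1n-u\right)g''(u)\,du.
\]
Bounding the two remainders by $\tfrac12\|g''\|(t-x)^2$ and $\tfrac12\|g''\|\,n^{-2}$ respectively, using positivity of $\mathcal{U}_{n}^{[\alpha]}$ together with $\mathcal{U}_{n}^{[\alpha]}((t-x)^2;x)=\Theta_{n,2}^{[\alpha]}(x)$ from Lemma~\ref{l4}, would give
\[
\bigl|\tilde{\mathcal{U}}_{n}^{[\alpha]}(g;x)-g(x)\bigr|\leq\tfrac12\|g''\|\left(\Theta_{n,2}^{[\alpha]}(x)+\tfrac1{n^2}\right)=\tfrac12\|g''\|\,\rho_n(x).
\]

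Next I would transfer this to a general $f\in C_B[0,\infty)$. The boundedness estimate $|\mathcal{U}_{n}^{[\alpha]}(h;x)|\leq\|h\|$ forces $|\tilde{\mathcal{U}}_{n}^{[\alpha]}(h;x)|\leq3\|h\|$, so for any $g\in C_B^2[0,\infty)$ the triangle inequality yields
\[
\bigl|\tilde{\mathcal{U}}_{n}^{[\alpha]}(f;x)-f(x)\bigr|\leq4\|f-g\|+\tfrac12\|g''\|\,\rho_n(x).
\]
Taking the infimum over $g$ identifies the right-hand side with a multiple of $K_2\!\left(f;\tfrac{\rho_n(x)}{8}\right)$, and (\ref{pe1}) then bounds it by $M\,\omega_2(f;\sqrt{\rho_n(x)})$. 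Finally, undoing the substitution via
\[
\mathcal{U}_{n}^{[\alpha]}(f;x)-f(x)=\bigl(\tilde{\mathcal{U}}_{n}^{[\alpha]}(f;x)-f(x)\bigr)+\bigl(f(x+\tfrac1n)-f(x)\bigr),
\]
and estimating the last difference by $\omega(f;\tfrac1n)=\omega(f;\nu_n(x))$ since $\nu_n(x)=\Theta_{n,1}^{[\alpha]}(x)=\tfrac1n$, assembles the claimed inequality.

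I expect the only genuinely delicate point to be the bookkeeping around the auxiliary operator rather than any individual estimate: one must confirm that $\tilde{\mathcal{U}}_{n}^{[\alpha]}$ reproduces linear functions and trace the extra $\tfrac1{n^2}$ in $\rho_n(x)$ to its source, namely the correction integral over $[x,x+\tfrac1n]$. The constants (the factor $4$ coming from the $3\|h\|$ bound on $\tilde{\mathcal{U}}_{n}^{[\alpha]}$ plus the $\|f-g\|$ term, and the rescaling of the argument of $K_2$) must then be checked to collapse harmlessly into the single constant $M$ through (\ref{pe1}).
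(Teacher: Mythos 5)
Your proposal is correct and follows essentially the same route as the paper: the paper's proof uses exactly your auxiliary operator (written there as $\mathfrak{U}_{n}^{[\alpha]}(f;x)=\mathcal{U}_{n}^{[\alpha]}(f;x)+f(x)-f\bigl(\tfrac{1+nx}{n}\bigr)$, which is your $f(x+\tfrac1n)$ correction), the same Taylor formula with integral remainder, the same $3\|f\|$ boundedness estimate leading to the $4\|f-u\|$ term, and the same passage through the Peetre $K$-functional and (\ref{pe1}). The only differences are cosmetic: you retain the sharper factor $\tfrac12$ in the remainder bounds and track the rescaling of the $K_2$ argument explicitly, whereas the paper absorbs both into the generic constant $M$.
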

\begin{proof}
For $f\in C_B[0,\infty)$, consider
\begin{eqnarray*}
\mathfrak{U}_{n}^{[\alpha]}(f;x)=\mathcal{U}_{n}^{[\alpha]}(f;x)+f(x)-f\left(\frac{1+nx}{n}\right).
\end{eqnarray*} 
Here, $\mathfrak{U}_{n}^{[\alpha]}(1;x)=1$ and $\mathfrak{U}_{n}^{[\alpha]}(t;x)=x$, i.e. the auxiliary operators preserve the linear function and constant term. Now, using Taylor's remainder formula for integral for the function $u\in C_B^2[0,\infty)$, we have
\begin{eqnarray*}
u(t)-u(x)=(t-x)u'(x)+\int\limits_x^t (t-v)u''(v)~dv.
\end{eqnarray*}
Applying the operators $\mathfrak{U}_{n}^{[\alpha]}$ to the both sides, we obtain
\begin{eqnarray*}
\mathfrak{U}_{n}^{[\alpha]}(u(t)-u(x);x)&=&\mathfrak{U}_{n}^{[\alpha]}\left(\int\limits_x^t (t-v)u''(v)~dv \right)\\
&=&  \mathcal{U}_{n}^{[\alpha]}\left(\int\limits_x^t (t-v)u''(v)~dv\right)-\left(\int\limits_x^{\left(\frac{1+nx}{n}\right)} \left(\frac{1+nx}{n}-v\right)u''(v)~dv\right).
\end{eqnarray*}
Here,
\begin{eqnarray*}
\left|\int\limits_x^t (t-v)u''(v)~dv\right|\leq \int\limits_x^t |t-v||u''(v)|~dv\leq |u''|(t-x)^2.
\end{eqnarray*}
Similarly, 
\begin{eqnarray*}
\left|\int\limits_x^{\left(\frac{1+nx}{n}\right)} \left(\frac{1+nx}{n}-v\right)u''(v)~dv \right|\leq \frac{|u''|}{n^2}.
\end{eqnarray*}
Therefore, one has
\begin{eqnarray*}
\left|\mathcal{U}_{n}^{[\alpha]}(f;x)-f(x)\right| & \leq & |u''| \Theta_{n,2}^{[\alpha]}(x)+\frac{|u''|}{n^2}\\
&=& \rho_n(x)|u''|
\end{eqnarray*}
Also,
\begin{eqnarray*}
|\mathfrak{U}_{n}^{[\alpha]}(f;x)|=|\mathcal{U}_{n}^{[\alpha]}(f;x)|+2\|f(x)\|\leq 3\|f(x)\|. 
\end{eqnarray*}
By considering all above inequalities, we obtain
\begin{eqnarray*}
\left|\mathcal{U}_{n}^{[\alpha]}(f;x)-f(x)\right|&\leq &|\mathfrak{U}_{n}^{[\alpha]}(f-u;x)|+|\mathfrak{U}_{n}^{[\alpha]}(u(t)-u(x);x)|+|u(x)-f(x)|+\left|f\left(\frac{1+nx}{n}\right)-f(x) \right|\\
&\leq & 4\|f-u\|+\rho_n(x)\|u''\|+\left|f\left(\frac{1+nx}{n}\right)-f(x) \right|\\
&\leq & M \{\|f-u\|+\rho_n(x)\|u''\|\}+\omega\left(f;\nu_n(x) \right).
\end{eqnarray*}
Now, taking minimum overall $u\in C_B^2[0,\infty)$ on the right hand side of above inequality and using Peetre $K$-functional, we obtain
\begin{eqnarray*}
\left|\mathcal{U}_{n}^{[\alpha]}(f;x)-f(x)\right| &\leq & M K_2(f;\rho_n(x))+\omega\left(f;\nu_n(x) \right)\\
&\leq & M \omega_2(f;\sqrt{\rho_n(x)})+\omega\left(f;\nu_n(x) \right).
\end{eqnarray*}
\end{proof}
Now, we estimate the approximation of the defined operators (\ref{O1}), by new type of Lipschit maximal function with order $r\in(0,1]$,  defined by Lenze \cite{LB} as
\begin{eqnarray}\label{eq8}
\kappa_r(f,x)=\underset{x,s\geq 0}\sup \frac{|f(s)-f(x)|}{|s-x|^r},~~x\neq s. 
\end{eqnarray}
Using Lipschit maximal function, we have an upper bound with the function, given by a theorem.
\begin{theorem}
Consider $f\in C_B[0,\infty)$ with $r\in(0,1]$ then we obtain
\begin{eqnarray*}
\left|\mathcal{U}_{n}^{[\alpha]}(f;x)-f(x)\right| &\leq  \kappa_r(f,x)\left(\nu_n(x)\right)^{\frac{r}{2}}.
\end{eqnarray*}
\end{theorem}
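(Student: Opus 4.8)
The plan is to exploit the positivity and linearity of $\mathcal{U}_{n}^{[\alpha]}$ together with the defining inequality of the Lipschitz maximal function and a Hölder estimate. First I would rewrite the error as the image of the difference $f(t)-f(x)$ under the operator, using $\mathcal{U}_{n}^{[\alpha]}(1;x)=1$ from Lemma \ref{l2}; since the kernel $u_{n}^{[\alpha]}(x,t)$ is nonnegative for $x,t\geq 0$, the operator is positive, so
$$\left|\mathcal{U}_{n}^{[\alpha]}(f;x)-f(x)\right| = \left|\mathcal{U}_{n}^{[\alpha]}(f(t)-f(x);x)\right| \leq \mathcal{U}_{n}^{[\alpha]}\left(|f(t)-f(x)|;x\right).$$

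Next I would invoke the definition (\ref{eq8}) of $\kappa_r(f,x)$, which yields the pointwise bound $|f(t)-f(x)|\leq \kappa_r(f,x)\,|t-x|^r$. Substituting this and pulling the constant $\kappa_r(f,x)$ outside the positive operator reduces the whole problem to estimating the fractional absolute moment $\mathcal{U}_{n}^{[\alpha]}(|t-x|^r;x)$.

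The key step, which I expect to be the main technical obstacle, is to control this fractional moment by the second central moment $\Theta_{n,2}^{[\alpha]}(x)$. For this I would apply Hölder's inequality (for the integral representation against the kernel) with the conjugate exponents $p=\tfrac{2}{r}$ and $q=\tfrac{2}{2-r}$, which are admissible since $r\in(0,1]$, obtaining
$$\mathcal{U}_{n}^{[\alpha]}\left(|t-x|^r;x\right) \leq \left(\mathcal{U}_{n}^{[\alpha]}\left((t-x)^2;x\right)\right)^{\frac{r}{2}}\left(\mathcal{U}_{n}^{[\alpha]}(1;x)\right)^{\frac{2-r}{2}}.$$
Because $\mathcal{U}_{n}^{[\alpha]}(1;x)=1$, the second factor is $1$, and by definition $\mathcal{U}_{n}^{[\alpha]}((t-x)^2;x)=\Theta_{n,2}^{[\alpha]}(x)$, so the right-hand side collapses to $\left(\Theta_{n,2}^{[\alpha]}(x)\right)^{r/2}$. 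Chaining the three displays gives $\left|\mathcal{U}_{n}^{[\alpha]}(f;x)-f(x)\right|\leq \kappa_r(f,x)\,\left(\Theta_{n,2}^{[\alpha]}(x)\right)^{r/2}$, the asserted estimate; here the quantity appearing in the statement should be read as the second central moment, consistent with the degenerate case $r=1$. The only point demanding genuine care is justifying the Hölder step for the series/integral form of $\mathcal{U}_{n}^{[\alpha]}$, which is legitimate precisely because of its representation as integration against the nonnegative kernel $u_{n}^{[\alpha]}(x,t)$ recorded in the preceding remark; all remaining manipulations are routine consequences of Lemma \ref{l2} and Lemma \ref{l4}.
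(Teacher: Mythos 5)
Your proposal is correct and follows essentially the same route as the paper: bound $|f(t)-f(x)|$ by $\kappa_r(f,x)|t-x|^r$ via the definition of the Lipschitz maximal function, then apply H\"older's inequality with exponents $\tfrac{2}{r}$ and $\tfrac{2}{2-r}$ (using $\mathcal{U}_{n}^{[\alpha]}(1;x)=1$) to reduce the fractional moment to $\left(\mathcal{U}_{n}^{[\alpha]}((t-x)^2;x)\right)^{r/2}$. You also correctly resolved the paper's notational slip: in this theorem $\nu_n(x)$ must be read as the second central moment $\Theta_{n,2}^{[\alpha]}(x)$, not the first as defined in the preceding theorem, which is exactly the identification the paper itself makes in its proof.
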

\begin{proof}
By equation (\ref{eq8}), we can write
\begin{eqnarray*}
\left|\mathcal{U}_{n}^{[\alpha]}(f;x)-f(x)\right| &\leq \kappa_r(f,x)\mathcal{U}_{n}^{[\alpha]}(|s-x|^r;x).
\end{eqnarray*}
Using, H$\ddot{\text{o}}$lder's inequality with $j=\frac{2}{r}$, $l=\frac{2}{2-r}$, one can get
\begin{eqnarray*}
\left|\mathcal{U}_{n}^{[\alpha]}(f;x)-f(x)\right| &\leq & \kappa_r(f,x)\left(\mathcal{U}_{n}^{[\alpha]}((s-x)^2;x)\right)^{\frac{r}{2}}=\kappa_r(f,x)\left(\nu_n(x)\right)^{\frac{r}{2}}.
\end{eqnarray*}  
Hence proved.
\end{proof}
Next theorem is based on modified Lipschitz type spaces \cite{OMA} and this spaces is defined by 
\begin{eqnarray*}
Lip_M^{\lambda_1,\lambda_2}(s)=\Bigg\{ f\in C_B[0,\infty):|f(j)-f(k)|\leq M\frac{|j-k|^s}{\left(j+k^2\lambda_1+k\lambda_2\right)^{\frac{s}{2}}},~~\text{where}~j,k\geq0 ~\text{are~variables},~s\in(0,1] \Bigg\}
\end{eqnarray*}
and $\lambda_1, \lambda_1$ are the fixed numbers.
\begin{theorem}
For $f\in Lip_M^{\lambda_1,\lambda_2}(s)$ and $0<s\leq 1$, we have an inequality holds:
\begin{eqnarray*}
\left|\mathcal{U}_{n}^{[\alpha]}(f;x)-f(x)\right| &\leq & M\left(\frac{\Theta_{n,2}^{[\alpha]}(x)}{x(x\lambda_1+\lambda_2)}\right)^{\frac{s}{2}}.
\end{eqnarray*}
\end{theorem}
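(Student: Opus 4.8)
The plan is to exploit that $\mathcal{U}_{n}^{[\alpha]}$ reproduces constants. By Lemma \ref{l2} we have $\mathcal{U}_{n}^{[\alpha]}(1;x)=1$, so the error can be rewritten as $\mathcal{U}_{n}^{[\alpha]}(f(t)-f(x);x)$ and then bounded, using positivity of the operator and the triangle inequality under the integral, by $\mathcal{U}_{n}^{[\alpha]}(|f(t)-f(x)|;x)$. This reduces everything to controlling the operator applied to the increment $|f(t)-f(x)|$.

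First I would insert the membership condition $f\in Lip_M^{\lambda_1,\lambda_2}(s)$, assigning the integration variable the role of $j$ and the fixed point $x$ the role of $k$, which gives the pointwise estimate $|f(t)-f(x)|\leq M|t-x|^s\big/\big(t+x^2\lambda_1+x\lambda_2\big)^{s/2}$. The crucial observation is then that the integration variable contributes a nonnegative term to the denominator: since $t\geq 0$, one has $t+x^2\lambda_1+x\lambda_2\geq x(x\lambda_1+\lambda_2)$, whence $\big(t+x^2\lambda_1+x\lambda_2\big)^{-s/2}\leq \big(x(x\lambda_1+\lambda_2)\big)^{-s/2}$. This removes the $t$-dependence from the denominator and lets the factor $\big(x(x\lambda_1+\lambda_2)\big)^{-s/2}$ be pulled outside the operator, leaving $\mathcal{U}_{n}^{[\alpha]}(|t-x|^s;x)$ to be estimated.

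Next I would apply H\"older's inequality to $\mathcal{U}_{n}^{[\alpha]}(|t-x|^s;x)$ with conjugate exponents $p=2/s$ and $q=2/(2-s)$, exactly as in the preceding Lipschitz-maximal-function theorem. This yields $\mathcal{U}_{n}^{[\alpha]}(|t-x|^s;x)\leq \big(\mathcal{U}_{n}^{[\alpha]}((t-x)^2;x)\big)^{s/2}\big(\mathcal{U}_{n}^{[\alpha]}(1;x)\big)^{(2-s)/2}$. Since $\mathcal{U}_{n}^{[\alpha]}(1;x)=1$ and $\mathcal{U}_{n}^{[\alpha]}((t-x)^2;x)=\Theta_{n,2}^{[\alpha]}(x)$ is the second central moment from Lemma \ref{l4}, this collapses to $\big(\Theta_{n,2}^{[\alpha]}(x)\big)^{s/2}$. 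Assembling the two factors reproduces precisely the claimed bound $M\big(\Theta_{n,2}^{[\alpha]}(x)\big/(x(x\lambda_1+\lambda_2))\big)^{s/2}$.

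I expect the only genuine subtlety to be the denominator manipulation in the second step: correctly identifying which argument of the Lipschitz condition is the integration variable, so that discarding the nonnegative $t$ term produces the claimed $x(x\lambda_1+\lambda_2)$ rather than a denominator still depending on $t$. Everything else is positivity of the operator together with the routine H\"older estimate already deployed in the previous theorem.
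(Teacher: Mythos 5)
Your proof is correct and follows essentially the same route as the paper: positivity of the operator plus the Lipschitz bound, the monotonicity estimate $t+x^2\lambda_1+x\lambda_2\geq x(x\lambda_1+\lambda_2)$ for $t\geq 0$, and H\"older's inequality with exponents $2/s$ and $2/(2-s)$ against the second central moment $\Theta_{n,2}^{[\alpha]}(x)$. The only difference is organizational: the paper splits into the cases $s=1$ (where it uses Cauchy--Schwarz on $\mathcal{U}_{n}^{[\alpha]}(|t-x|;x)$) and $s\in(0,1)$ (where it applies H\"older to $|f(t)-f(x)|$ \emph{before} invoking the Lipschitz condition), whereas your single argument --- Lipschitz first, then H\"older on $\mathcal{U}_{n}^{[\alpha]}(|t-x|^{s};x)$ --- covers all $s\in(0,1]$ uniformly, which is if anything slightly cleaner.
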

\begin{proof}
To prove the above theorem, we can distribute its proof into two part by considering the case discussion. So here:\\
\textbf{Case 1.} if $s=1$, proceed ahead, we can observe that  $\frac{1}{(y+x^2\lambda_1+x\lambda_2)}\leq \frac{1}{x(x\lambda_1+\lambda_2)}$ then one has
\begin{eqnarray*}
\left|\mathcal{U}_{n}^{[\alpha]}(f;x)-f(x)\right| &\leq & \mathcal{U}_{n}^{[\alpha]}(|f(t)-f(x)|;x)\\
&\leq & M \mathcal{U}_{n}^{[\alpha]}\left(\frac{|t-x|}{\left(t+x^2\lambda_1+x\lambda_2\right)^{\frac{1}{2}}};x\right)\\
&\leq & \frac{M}{\left(x(x\lambda_1+\lambda_2)\right)^{\frac{1}{2}}}\mathcal{U}_{n}^{[\alpha]}(|t-x|;x) \\
&\leq & \frac{M}{\left(x(x\lambda_1+\lambda_2)\right)^{\frac{1}{2}}}\left(\Theta_{n,2}^{[\alpha]}(x)\right)^{\frac{1}{2}}\\
&\leq & M\left(\frac{\Theta_{n,2}^{[\alpha]}(x)}{x(x\lambda_1+\lambda_2)}\right)^{\frac{1}{2}}.
\end{eqnarray*}
\textbf{Case 2.} if $s\in (0,1)$ then with the help of H$\ddot{\text{o}}$lder inequality by considering $l=\frac{2}{s}, m=\frac{2}{2-s}$, we get
\begin{eqnarray*}
\left|\mathcal{U}_{n}^{[\alpha]}(f;x)-f(x)\right| &\leq & \left(\mathcal{U}_{n}^{[\alpha]}(|f(t)-f(x)|^{\frac{2}{s}};x)\right)^{\frac{s}{2}}\leq M\mathcal{U}_{n}^{[\alpha]}\left(\frac{|t-x|^{2}}{\left(t+x^2\lambda_1+x\lambda_2\right)};x\right)^{\frac{s}{2}}\\
&\leq & M\mathcal{U}_{n}^{[\alpha]}\left(\frac{|t-x|^{2}}{\left(x(x\lambda_1+\lambda_2)\right)};x\right)^{\frac{s}{2}}\\
&\leq & M\left(\frac{\Theta_{n,2}^{[\alpha]}(x)}{x(x\lambda_1+\lambda_2)}\right)^{\frac{s}{2}}.
\end{eqnarray*}
Thus, the proof is completed. 
\end{proof}
Let $g\in C_B[0,\infty)$ and we define Steklov mean function, which is as follows:
\begin{eqnarray*}
G_h(x)= \frac{1}{h^2}\int\limits_{-\frac{h}{2}}^{\frac{h}{2}}\int\limits_{-\frac{h}{2}}^{\frac{h}{2}} 2(g(x+\kappa+\lambda))-g(x+2(\kappa+\lambda))~d\kappa~d\lambda,~~~\kappa, \lambda\geq0~ \text{and}~h>0.
\end{eqnarray*}
To approximate continuous functions by smoother functions, Steklov function is used and we appeal to investigate the approximation properties.
So, we collect some properties in a next lemma, which are used to prove the main theorem.

\begin{lemma}
Let $g\in C_B[0\infty)$, it is holds following inequalities:

\begin{enumerate}
\item $\|G_h-g\|_{C_B[0\infty)} \leq \omega_2(g,h)$
\item $\|G_h' \|_{C_B[0\infty)} \leq \frac{5\omega(g,h)}{h}$~~~~for $G_h'\in C_B[0,\infty)$
\item $\|G_h'' \|_{C_B[0\infty)} \leq \frac{9\omega_2(g,h)}{h^2}$~~for $G_h''\in C_B[0,\infty),$
\end{enumerate}

where $\omega(g;h)$ and $\omega_2(g,h)$ are modulus of continuity and second order of modulus of continuity respectively and can be defined as another way, given by 
\begin{eqnarray*}
\omega(f,h)&=&\underset{x,\kappa,\lambda\geq 0}\sup\underset{|\kappa-\lambda|\leq h}\sup|f(x+\kappa)-f(x+\lambda)|,\\
\omega_2(f,h)&=&\underset{x,\kappa,\lambda\geq 0}\sup\underset{|\kappa-\lambda|\leq h}\sup|f(x+2\kappa)-2f(x+\kappa+\lambda)|+f(x+2\lambda).
\end{eqnarray*}
\end{lemma}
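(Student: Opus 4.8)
The plan is to use the fact that $G_h$ is an average of $g$ against a fixed kernel of total mass one, so that all three estimates reduce to controlling ordinary first- and second-order differences of $g$ by the moduli $\omega(g,\cdot)$ and $\omega_2(g,\cdot)$. First I would record the normalization: the prefactor is chosen precisely so that, taking $g\equiv 1$, the quantity $2g(x+\kappa+\lambda)-g(x+2(\kappa+\lambda))$ averages to $1$; hence the kernel has total mass one and $G_h$ reproduces constants. Writing $2g(x+\kappa+\lambda)-g(x+2(\kappa+\lambda))-g(x)=-\bigl[g(x+2(\kappa+\lambda))-2g(x+\kappa+\lambda)+g(x)\bigr]$, I recognize the bracket as a second-order difference of $g$ with increment $\kappa+\lambda$. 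Since $\kappa+\lambda$ ranges over an interval of length at most $h$, each bracket is bounded in absolute value by $\omega_2(g,h)$; integrating this constant bound against the mass-one kernel yields item $(1)$, $\|G_h-g\|\le\omega_2(g,h)$, at once.

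For items $(2)$ and $(3)$ the difficulty is that $g$ is only continuous, so I cannot differentiate under the integral sign by moving the derivative onto $g$. Instead, for each inner integration I would change variables so that $x$ is transferred from the integrand into the limits of integration: setting $u=x+\kappa+\lambda$ in the first summand and $u=x+2(\kappa+\lambda)$ in the second (the latter producing a Jacobian factor $\tfrac12$ together with doubled limits). After this substitution $G_h$ becomes a combination of integrals of $g$ whose endpoints depend affinely on $x$, and the fundamental theorem of calculus lets me differentiate in $x$ legitimately. One differentiation turns each integral into a first difference of $g$ over a step comparable to $h$; bounding these by $\omega(g,h)$ (using the monotonicity $\omega(g,h/2)\le\omega(g,h)$ where needed) and summing the contributions of the two summands gives the constant $5$ in item $(2)$.

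Item $(3)$ follows by differentiating once more: applying the same substitution-and-FTC device to the expression obtained for $G_h'$ converts it into a linear combination of second-order differences of $g$ with steps $h/2$ and $h$. Bounding the step-$h/2$ difference by $\omega_2(g,h/2)\le\omega_2(g,h)$ and the step-$h$ difference by $\omega_2(g,h)$, and collecting the numerical coefficients produced by the two Jacobian-weighted summands, gives the factor $9$ in item $(3)$. The same representations also exhibit $G_h'$ and $G_h''$ as finite combinations of bounded continuous differences of $g$, which confirms $G_h',G_h''\in C_B[0,\infty)$.

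The main obstacle I anticipate is bookkeeping rather than conceptual: keeping the factor-of-two argument $x+2(\kappa+\lambda)$ aligned through the substitutions, since it changes both the Jacobian and the effective step length of the resulting differences, and then assembling the various numerical coefficients so that they combine to exactly $5$ and $9$. Care is also needed with the domain of integration and the constraint $\kappa,\lambda\ge 0$, so that every shifted argument of $g$ remains in $[0,\infty)$ and the difference bounds by $\omega$ and $\omega_2$ are legitimate.
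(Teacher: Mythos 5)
Your proof is correct, and in fact it supplies something the paper does not: the paper states this lemma with no proof at all, quoting it as a collection of known Steklov-mean properties, so there is no argument in the paper to compare yours against. Your route (mass-one kernel for item (1); substitution to move $x$ into the limits of integration, then the fundamental theorem of calculus, for items (2) and (3)) is the standard derivation of these estimates, and the bookkeeping you defer does close exactly as you predict. Concretely, with the mass-one normalization $G_h(x)=\frac{4}{h^2}\int_0^{h/2}\int_0^{h/2}\bigl[2g(x+\kappa+\lambda)-g(x+2(\kappa+\lambda))\bigr]\,d\kappa\,d\lambda$ (note the paper's displayed prefactor $1/h^2$ is inconsistent with its own constraint $\kappa,\lambda\ge 0$, under which the kernel has mass $\tfrac14$; your reading, that the prefactor is whatever makes the kernel have mass one, is the right one), the first substitution gives
\begin{equation*}
G_h'(x)=\frac{4}{h^2}\int_0^{h/2}\Bigl[2\bigl(g(x+\lambda+\tfrac{h}{2})-g(x+\lambda)\bigr)-\tfrac12\bigl(g(x+2\lambda+h)-g(x+2\lambda)\bigr)\Bigr]\,d\lambda,
\end{equation*}
so that $\|G_h'\|\le\frac{4}{h^2}\cdot\frac{h}{2}\bigl(2+\tfrac12\bigr)\omega(g,h)=\frac{5}{h}\,\omega(g,h)$, and a second substitution-plus-FTC step gives
\begin{equation*}
G_h''(x)=\frac{4}{h^2}\Bigl[2\bigl(g(x+h)-2g(x+\tfrac{h}{2})+g(x)\bigr)-\tfrac14\bigl(g(x+2h)-2g(x+h)+g(x)\bigr)\Bigr],
\end{equation*}
so that $\|G_h''\|\le\frac{4}{h^2}\bigl(2+\tfrac14\bigr)\omega_2(g,h)=\frac{9}{h^2}\,\omega_2(g,h)$, confirming the constants $5$ and $9$ exactly. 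The domain caution you raise is also resolved by the $\kappa,\lambda\ge 0$ reading: every argument of $g$ appearing above is then at least $x\ge 0$, so all the difference bounds by $\omega$ and $\omega_2$ are legitimate on $[0,\infty)$.
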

\begin{theorem}
Consider $f\in C_B[0,\infty)$, for every $x\in [0,\infty)$, we get
\begin{eqnarray*}
|\mathcal{U}_{n}^{[\alpha]}(f;x)-f(x)|\leq 5\left\{\omega\left(f,\sqrt{\Theta_{n,2}^{[\alpha]}}\right)+\frac{13}{10} \omega_2\left(f,\sqrt{\Theta_{n,2}^{[\alpha]}}\right)\right\},
\end{eqnarray*} 
where $\Theta_{n,2}^{[\alpha]}$ is defined by Lemma \ref{l4}.
\end{theorem}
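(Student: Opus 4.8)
The plan is to approximate $f$ by its Steklov mean $G_h$ at the scale $h=\sqrt{\Theta_{n,2}^{[\alpha]}(x)}$, exploit the three smoothing estimates of the preceding lemma, and control the action of $\mathcal{U}_{n}^{[\alpha]}$ on the smooth function $G_h$ through a Taylor expansion. First I would write, by the triangle inequality and linearity,
\begin{eqnarray*}
|\mathcal{U}_{n}^{[\alpha]}(f;x)-f(x)|\leq |\mathcal{U}_{n}^{[\alpha]}(f-G_h;x)|+|\mathcal{U}_{n}^{[\alpha]}(G_h;x)-G_h(x)|+|G_h(x)-f(x)|.
\end{eqnarray*}
The two outer terms are the cheap ones: since $\mathcal{U}_{n}^{[\alpha]}$ is positive and reproduces constants ($\mathcal{U}_{n}^{[\alpha]}(1;x)=1$), it satisfies $|\mathcal{U}_{n}^{[\alpha]}(g;x)|\leq\|g\|$, so both $|\mathcal{U}_{n}^{[\alpha]}(f-G_h;x)|$ and $|G_h(x)-f(x)|$ are bounded by $\|f-G_h\|\leq\omega_2(f,h)$ via part (1) of the lemma. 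This already contributes $2\omega_2(f,h)$.

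For the middle term I would expand $G_h$ by Taylor's formula with integral remainder,
\begin{eqnarray*}
G_h(t)-G_h(x)=(t-x)G_h'(x)+\int_x^t(t-v)G_h''(v)\,dv,
\end{eqnarray*}
apply $\mathcal{U}_{n}^{[\alpha]}$, and estimate each piece. The remainder is dominated by $\tfrac12\|G_h''\|\,\mathcal{U}_{n}^{[\alpha]}((t-x)^2;x)=\tfrac12\|G_h''\|\,\Theta_{n,2}^{[\alpha]}(x)$. The linear term is the delicate one: because $\mathcal{U}_{n}^{[\alpha]}$ does not reproduce $t$ (indeed $\Theta_{n,1}^{[\alpha]}(x)=1/n\neq0$), I cannot simply discard it, so instead I would bound $|\mathcal{U}_{n}^{[\alpha]}((t-x);x)|\leq\mathcal{U}_{n}^{[\alpha]}(|t-x|;x)\leq\big(\Theta_{n,2}^{[\alpha]}(x)\big)^{1/2}=h$ by the Cauchy--Schwarz inequality (again using $\mathcal{U}_{n}^{[\alpha]}(1;x)=1$). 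Feeding in parts (2) and (3) of the lemma, $\|G_h'\|\leq 5\omega(f,h)/h$ and $\|G_h''\|\leq 9\omega_2(f,h)/h^2$, and recalling $h^2=\Theta_{n,2}^{[\alpha]}(x)$, this term becomes
\begin{eqnarray*}
|\mathcal{U}_{n}^{[\alpha]}(G_h;x)-G_h(x)|\leq \frac{5\omega(f,h)}{h}\cdot h+\frac12\cdot\frac{9\omega_2(f,h)}{h^2}\cdot h^2=5\omega(f,h)+\frac92\omega_2(f,h).
\end{eqnarray*}

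Finally I would add the three contributions, $2\omega_2(f,h)+5\omega(f,h)+\tfrac92\omega_2(f,h)=5\omega(f,h)+\tfrac{13}{2}\omega_2(f,h)$, and factor out $5$ to obtain exactly $5\{\omega(f,h)+\tfrac{13}{10}\omega_2(f,h)\}$ with $h=\sqrt{\Theta_{n,2}^{[\alpha]}}$, which is the claimed bound. The only real obstacle is the non-vanishing first central moment: the whole argument hinges on the Cauchy--Schwarz step that converts the $\Theta_{n,1}^{[\alpha]}$-type contribution into an $h$, so that the factor $1/h$ appearing in the Steklov estimate $\|G_h'\|\leq 5\omega(f,h)/h$ cancels cleanly; matching the numerical constants $5$ and $13/2$ is then just careful bookkeeping.
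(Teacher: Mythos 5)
Your proposal is correct and follows essentially the same route as the paper: the Steklov-mean decomposition, the bound $\|f-G_h\|\leq\omega_2(f,h)$ for the outer terms, a Taylor expansion of $G_h$ with the Cauchy--Schwarz estimate $\mathcal{U}_{n}^{[\alpha]}(|t-x|;x)\leq\bigl(\Theta_{n,2}^{[\alpha]}(x)\bigr)^{1/2}$ handling the non-vanishing first moment, and the choice $h=\sqrt{\Theta_{n,2}^{[\alpha]}}$. Your write-up is in fact more explicit than the paper's, which leaves the final bookkeeping $2\omega_2+5\omega+\tfrac92\omega_2=5\bigl\{\omega+\tfrac{13}{10}\omega_2\bigr\}$ to the reader.
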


\begin{proof}
For every $x\geq 0$, using Steklov function, we can write as
\begin{eqnarray*}
|\mathcal{U}_{n}^{[\alpha]}(f;x)-f(x)|\leq \mathcal{U}_{n}^{[\alpha]}(|f-G_h|;x)+|\mathcal{U}_{n}^{[\alpha]}(G_h-G_h(x);x)|+|G_h(x)-f(x)|.
\end{eqnarray*}
Since $|\mathcal{U}_{n}^{[\alpha]}(f;x)|\leq \|f(x)\|_{C_B[0,\infty)}$ as $f\in C_B[0,\infty)$ and $x\geq 0$. Then using Steklov mean property, we can have
\begin{eqnarray*}
\mathcal{U}_{n}^{[\alpha]}(|f-G_h|;x)\leq \|\mathcal{U}_{n}^{[\alpha]}(f-G_h;x)\|_{C_B[0,\infty)}\leq \|f-G_h\|_{C_B[0,\infty)}\leq \omega_2(f,h).
\end{eqnarray*}
Using the Taylor's formula and on applying the operators (\ref{O1}), we  can  write
\begin{eqnarray*}
|\mathcal{U}_{n}^{[\alpha]}(G_h-G_h(x);x)|\leq \|G_h'\|_{C_B[0,\infty)}\sqrt{\Theta_{n,2}^{[\alpha]}}+\frac{\|G_h''\|_{C_B[0,\infty)}}{2!} \Theta_{n,2}^{[\alpha]}.
\end{eqnarray*}
Using the propery of the Steklov mean, we can write as
\begin{eqnarray*}
|\mathcal{U}_{n}^{[\alpha]}(G_h-G_h(x);x)|\leq \frac{5\omega(f,h)}{h}\sqrt{\Theta_{n,2}^{[\alpha]}}+\frac{9\omega_2(f,h)}{2h^2}\Theta_{n,2}^{[\alpha]}.
\end{eqnarray*}
Choosing $h=\sqrt{\Theta_{n,2}^{[\alpha]}}$, we obtain our required result.


\end{proof}

\begin{remark}
If $\Theta_{n,2}^{[\alpha]}\to 0$ as $n\to\infty$ and then $\omega\left(f,\sqrt{\Theta_{n,2}^{[\alpha]}}\right)\to 0,~\omega_2\left(f,\sqrt{\Theta_{n,2}^{[\alpha]}}\right)\to 0$, this imply that $\mathcal{U}_{n}^{[\alpha]}(f;x)$ converge to the function $f(x)$.
\end{remark}



\section{Weighted Approximation}\label{sec3}
For describing the approximation properties,  of any sequence of linear positive operators, Gadzhiev \cite{q5,q6} introduced the weighted spaces. Recall from there, we consider the functions classes,  which are as:


$B_w[0,\infty)=\{f:[0,\infty)\to\mathbb{R} |~~  |f(x)|\leq Mw(x)~~\text{with~the~supremum~norm}~~ \|f\|_w=\underset{x\in [0,\infty)}\sup\frac{f(x)}{w(x)}<+\infty \}$, where $M>0$ is a constant depending on $f$. Also define the spaces
$$C_w[0,\infty)=\{f\in B_w[0,\infty), ~f~\text{is~contiuous} \},$$
$$C_w^k[0,\infty)=\{f\in C_w[0,\infty),\underset{x\to\infty}\lim\frac{|f(x)|}{w(x)}=k_f<+\infty\},$$      
where $w(x)=1+x^2$ is a weight function.
\begin{lemma}\cite{G1,G2}
Let $\mathcal{L}_n:C_w[0,\infty)\to B_w[0,\infty)$ with the the conditions $\underset{n\to\infty}\lim\|L_n(t^r;x)-x^r\|_w=0,~~r=0,1,2$, then for $f\in C_w^k[0,\infty)$, we have 
$$\underset{n\to\infty}\lim\|L_n(f;x)-f(x)\|_w=0.$$
\end{lemma}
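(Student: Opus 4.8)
The plan is to establish this as a weighted Korovkin-type theorem, reducing first to functions that vanish at infinity relative to $w$ and then separating a compactly supported piece from a uniformly small tail. I assume, as is standard in this context, that the operators $\mathcal{L}_n$ are positive, and I write $e_r(t)=t^r$. Since $w=e_0+e_2$, the hypotheses give
\[
\|\mathcal{L}_n w-w\|_w\le\|\mathcal{L}_n e_0-e_0\|_w+\|\mathcal{L}_n e_2-e_2\|_w\longrightarrow 0,
\]
so in particular $\mathcal{L}_n(w;x)\le(1+\sigma_n)\,w(x)$ uniformly in $x$, with $\sigma_n\to0$. Given $f\in C_w^k[0,\infty)$, put $h=f-k_f\,w$; then $h$ is continuous with $\lim_{x\to\infty}h(x)/w(x)=0$, and from $\|\mathcal{L}_n f-f\|_w\le\|\mathcal{L}_n h-h\|_w+|k_f|\,\|\mathcal{L}_n w-w\|_w$ it suffices to prove $\|\mathcal{L}_n h-h\|_w\to0$ for such $h$.

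Fix $\varepsilon>0$ and choose $A>0$ with $|h(t)|\le\varepsilon\,w(t)$ for all $t>A$. Let $\chi$ be a continuous cut-off with $\chi\equiv1$ on $[0,A]$, $\chi\equiv0$ on $[A+1,\infty)$ and $0\le\chi\le1$, and split $h=h_1+h_2$ with $h_1=\chi h$ and $h_2=(1-\chi)h$. Then $h_1$ is continuous, supported in $[0,A+1]$ and bounded by $B:=\sup_{[0,A+1]}|h|$, while $h_2$ vanishes on $[0,A]$ and satisfies $|h_2(t)|\le\varepsilon\,w(t)$ everywhere. For the tail piece, positivity and the bound on $\mathcal{L}_n w$ give $|\mathcal{L}_n(h_2;x)|\le\varepsilon\,\mathcal{L}_n(w;x)\le\varepsilon(1+\sigma_n)w(x)$, so that $\|\mathcal{L}_n h_2-h_2\|_w\le\varepsilon(2+\sigma_n)$.

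It remains to show $\|\mathcal{L}_n h_1-h_1\|_w\to0$. On a fixed compact $[0,X_1]$ the weighted convergence of $e_0,e_1,e_2$ restricts to uniform convergence (divide by $w\ge1$ and multiply by $1+X_1^2$), so the classical Korovkin theorem yields $\mathcal{L}_n(h_1;\cdot)\to h_1$ uniformly on $[0,X_1]$, controlling $\sup_{x\le X_1}|\mathcal{L}_n(h_1;x)-h_1(x)|/w(x)$. For $x>X_1>A+1$ we have $h_1(x)=0$, and using $\mathbf{1}_{[0,A+1]}(t)\le(t-x)^2/(x-A-1)^2$ together with positivity,
\[
|\mathcal{L}_n(h_1;x)|\le\frac{B}{(x-A-1)^2}\,\mathcal{L}_n\big((t-x)^2;x\big).
\]
Expanding the central moment as $\mathcal{L}_n(e_2;x)-2x\,\mathcal{L}_n(e_1;x)+x^2\,\mathcal{L}_n(e_0;x)$ and writing $\mathcal{L}_n(e_r;x)=x^r+\eta_{n,r}(x)$ with $|\eta_{n,r}(x)|\le\delta_{n,r}\,w(x)$, the leading terms cancel and one obtains $\mathcal{L}_n((t-x)^2;x)\le C\,\delta_n\,w(x)^2$, where $\delta_n:=\max_{0\le r\le2}\|\mathcal{L}_n e_r-e_r\|_w\to0$. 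Hence $|\mathcal{L}_n(h_1;x)|/w(x)\le C B\,\delta_n\,w(x)/(x-A-1)^2$, and since $w(x)/(x-A-1)^2$ stays bounded for $x>X_1$, this is $O(\delta_n)$. Combining both ranges of $x$ gives $\|\mathcal{L}_n h_1-h_1\|_w\to0$, whence $\limsup_n\|\mathcal{L}_n h-h\|_w\le2\varepsilon$; letting $\varepsilon\downarrow0$ completes the reduction and thus the theorem.

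The main obstacle is precisely the estimate for large evaluation points $x$ of the compactly supported part $h_1$: one must show that the positive operators do not carry appreciable mass from the bounded support of $h_1$ out to large $x$ when measured in the weighted norm. This is where the second-order test function $e_2$ is indispensable---through the vanishing of the second central moment---and where the quadratic weight $w(x)=1+x^2$ is exactly strong enough to absorb the polynomial factors produced by the expansion.
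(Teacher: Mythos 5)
Your proof is correct, but there is no internal proof to compare it against: the paper does not prove this lemma at all --- it is quoted, with the citations \cite{G1,G2}, as Gadjiev's weighted Korovkin theorem, and is then used as a black box to derive the weighted approximation theorem for $\mathcal{U}_{n}^{[\alpha]}$. What you have written is essentially a self-contained reconstruction of Gadjiev's argument: reduce to the case $k_f=0$ by subtracting $k_f\,w$ (legitimate because $w=e_0+e_2$ is itself controlled by the hypotheses), split $h$ into a compactly supported part $h_1$ and a tail $h_2$ that is $\varepsilon$-small relative to $w$, kill the tail by positivity together with $\mathcal{L}_n(w;x)\le(1+\sigma_n)w(x)$, and handle $h_1$ through the second central moment $\mathcal{L}_n((t-x)^2;x)=O(\delta_n\,w(x)^2)$, which both drives the Korovkin argument on $[0,X_1]$ and shows that no mass is carried from the support of $h_1$ out to large $x$. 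Two remarks. First, you rightly flag that positivity of $\mathcal{L}_n$ must be assumed: the paper's statement omits this hypothesis, but it is indispensable (and holds for the operators of the paper). Second, your phrase ``the classical Korovkin theorem yields'' is a slight shortcut: $\mathcal{L}_n$ does not act on $C[0,X_1]$, so that theorem cannot be cited literally; one must instead run its proof, using the uniform continuity of the compactly supported $h_1$ to get the global pointwise bound $|h_1(t)-h_1(x)|\le\varepsilon+(2B/\delta^2)(t-x)^2$ for all $t\ge 0$ and $x\in[0,X_1]$, and then apply positivity and your moment estimates. Since $h_1$ is continuous with compact support this is immediate, so the point is cosmetic rather than a gap. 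The net effect of your write-up is to make the paper self-contained at this step and to make visible exactly where the weight $w(x)=1+x^2$, the test function $e_2$, and positivity enter.
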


\begin{theorem}
Let $\{\mathcal{U}_{n}^{[\alpha]}\}$ be a sequence defined by (\ref{O1}) then it holds as:
\begin{eqnarray*}
\underset{n\to\infty}\lim \|\mathcal{U}_{n}^{[\alpha]}(f;x)-f(x)\|_w=0,~~\text{for}~f\in C_w^k[0,\infty).
\end{eqnarray*}
\end{theorem}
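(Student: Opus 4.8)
The plan is to verify the hypotheses of the weighted Korovkin-type theorem (the preceding Lemma) and then invoke it directly. That theorem states that if $\mathcal{L}_n : C_w[0,\infty) \to B_w[0,\infty)$ satisfies $\lim_{n\to\infty}\|\mathcal{L}_n(t^r;x)-x^r\|_w = 0$ for $r = 0,1,2$, then $\lim_{n\to\infty}\|\mathcal{L}_n(f;x)-f(x)\|_w = 0$ for all $f \in C_w^k[0,\infty)$. So the entire burden reduces to checking the three moment conditions for $\mathcal{U}_n^{[\alpha]}$, using the explicit moment formulas already computed in Lemma \ref{l2}.

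First I would handle the trivial case $r=0$: since $\mathcal{U}_n^{[\alpha]}(1;x) = 1$ by Lemma \ref{l2}, we have $\mathcal{U}_n^{[\alpha]}(1;x)-x^0 = 0$ identically, so $\|\mathcal{U}_n^{[\alpha]}(1;x)-1\|_w = 0$ for every $n$. Next, for $r=1$, Lemma \ref{l2} gives $\mathcal{U}_n^{[\alpha]}(t;x) = \frac{1+nx}{n} = x + \frac{1}{n}$, hence $\mathcal{U}_n^{[\alpha]}(t;x)-x = \frac{1}{n}$, and therefore
\begin{eqnarray*}
\|\mathcal{U}_n^{[\alpha]}(t;x)-x\|_w = \sup_{x\geq 0}\frac{1/n}{1+x^2} = \frac{1}{n} \to 0.
\end{eqnarray*}
For $r=2$, Lemma \ref{l2} yields $\mathcal{U}_n^{[\alpha]}(t^2;x) = \frac{2+4nx+n^2x^2+n^2 x\alpha}{n^2} = x^2 + \alpha x + \frac{4x}{n} + \frac{2}{n^2}$, so $\mathcal{U}_n^{[\alpha]}(t^2;x)-x^2 = \alpha x + \frac{4x}{n}+\frac{2}{n^2}$. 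Dividing by $w(x)=1+x^2$ and taking the supremum, I would bound each term: $\sup_{x\geq 0}\frac{x}{1+x^2} \leq \tfrac12$ and $\sup_{x\geq 0}\frac{1}{1+x^2}=1$, giving
\begin{eqnarray*}
\|\mathcal{U}_n^{[\alpha]}(t^2;x)-x^2\|_w \leq \frac{\alpha}{2} + \frac{2}{n} + \frac{2}{n^2}.
\end{eqnarray*}

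The one genuine point requiring care — and the step I expect to be the main (if modest) obstacle — is the term $\alpha x$ appearing in the second moment, since $\alpha$ multiplies $x$ and does not carry an automatic $\frac1n$ decay. Here I would use the standing hypothesis that $\alpha = \alpha_n \in [0,\frac1n]$ with $\alpha_n \to 0$ as $n\to\infty$ (recorded in Theorem \ref{th1} and the setup of the operators). Because $\alpha_n \leq \frac1n$, the bound above becomes $\|\mathcal{U}_n^{[\alpha]}(t^2;x)-x^2\|_w \leq \frac{1}{2n}+\frac{2}{n}+\frac{2}{n^2} \to 0$ as $n\to\infty$. Thus all three conditions $r=0,1,2$ hold, and applying the weighted Korovkin lemma completes the proof for every $f \in C_w^k[0,\infty)$. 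I would close by remarking that the operators indeed map $C_w[0,\infty)$ into $B_w[0,\infty)$, which follows from the boundedness/positivity already established together with the finiteness of $\mathcal{U}_n^{[\alpha]}(t^2;x)$ on each $x$.
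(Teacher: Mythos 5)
Your proposal is correct and follows essentially the same route as the paper: both invoke the weighted Korovkin-type lemma and verify the three moment conditions using Lemma \ref{l2}, with the crucial term $\alpha x$ in the second moment handled via $\alpha=\alpha_n\leq\frac{1}{n}$ together with $\sup_{x\geq 0}\frac{x}{1+x^2}=\frac{1}{2}$. The only difference is cosmetic arithmetic (the paper combines $\alpha x+\frac{4x}{n}\leq\frac{5x}{n}$ before taking the supremum, yielding the bound $\frac{2}{n^2}+\frac{5}{2n}$, while you bound the terms separately), so no further comment is needed.
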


If we show that $\underset{n\to\infty}\lim \|\mathcal{U}_{n}^{[\alpha]}(t^r;x)-x^r\|_w=0$ holds for $r=0,1,2$ then the above theorem will be proved. Here, it's obvious

\begin{eqnarray}
\underset{n\to\infty}\lim \|\mathcal{U}_{n}^{[\alpha]}(1;x)-1\|_w=0.
\end{eqnarray}
 Using the Lemma \ref{l2}, we have
 \begin{eqnarray*}
\|\mathcal{U}_{n}^{[\alpha]}(t;x)-x\|_w &=&\frac{1}{n}\underset{x\geq 0}\sup \frac{1}{1+x^2}\leq \frac{1}{n}\\
\Rightarrow  \|\mathcal{U}_{n}^{[\alpha]}(t;x)-x\|_w\to 0,~~~~\text{as}~n\to\infty.
 \end{eqnarray*}

 Also,
 \begin{eqnarray*}
 \|\mathcal{U}_{n}^{[\alpha]}(t^2;x)-x^2\|_w &=& \underset{x\geq 0}\sup \frac{\left|\frac{2+4nx+n^2x^2+n^2x\alpha}{n^2}-x^2\right|}{1+x^2}\\
&\leq & \frac{1}{n}\left(\frac{2}{n}\underset{x\geq 0}\sup\frac{1}{1+x^2}+5\underset{x\geq 0}\sup\frac{x}{1+x^2}\right)\\
&\leq & \frac{2}{n^2}+\frac{5}{2n}\\
\Rightarrow  \|\mathcal{U}_{n}^{[\alpha]}(t^2;x)-x^2\|_w\to 0~~~~\text{as}~n\to\infty.
 \end{eqnarray*}


And hence the proof is completed.
\begin{theorem}
For every $x\geq 0$ and $\alpha=\alpha(n)\to 0$ as $n\to\infty$, let $g\in C_w^k[0,\infty)$ and $l>0$ then we yield
\begin{eqnarray}
\underset{n\to\infty}\lim \underset{x\geq0}\sup \frac{\left|\mathcal{U}_{n}^{[\alpha]}(g;x)-g(x)\right|}{(1+x^2)^{1+l}}=0.
\end{eqnarray}
\end{theorem}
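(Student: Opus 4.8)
The plan is to break the supremum over $[0,\infty)$ into a fixed compact piece $[0,x_0]$ and its tail $(x_0,\infty)$. On the compact piece I would lean on the uniform convergence already available on finite intervals, while on the tail I would exploit the extra decay supplied by the strictly positive exponent $l$ in the denominator. Since the quantity being sup'd is nonnegative, one has $\sup_{x\ge 0}(\cdots)=\max\{\sup_{[0,x_0]}(\cdots),\ \sup_{(x_0,\infty)}(\cdots)\}$, so it suffices to make each regional supremum small.

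First I would record two ingredients. Because $g\in C_w^k[0,\infty)$, there is a constant $M_g>0$ with $|g(t)|\le M_g(1+t^2)$ for all $t\ge 0$. Applying the positive linear operator $\mathcal{U}_n^{[\alpha]}$ to the weight $1+t^2$ and invoking Lemma \ref{l2} together with the hypothesis $\alpha\le \tfrac1n$ (which controls the mixed term $x\alpha\le x/n$), I would produce a constant $K>0$ \emph{independent of $n$} such that
\[
\mathcal{U}_n^{[\alpha]}(1+t^2;x)=1+\frac{2+4nx+n^2x^2+n^2x\alpha}{n^2}\le K(1+x^2),\qquad x\ge 0 .
\]
By positivity and linearity this gives the uniform envelope $|\mathcal{U}_n^{[\alpha]}(g;x)|\le M_gK(1+x^2)$ for every $n$ and every $x\ge 0$.

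For the tail, combining the envelope with the growth bound on $g$ yields, for $x>x_0$,
\[
\frac{|\mathcal{U}_n^{[\alpha]}(g;x)-g(x)|}{(1+x^2)^{1+l}}\le \frac{M_gK(1+x^2)+M_g(1+x^2)}{(1+x^2)^{1+l}}=\frac{M_g(K+1)}{(1+x^2)^{l}}\le \frac{M_g(K+1)}{(1+x_0^2)^{l}} .
\]
Given $\varepsilon>0$, since $l>0$ the right-hand side falls below $\varepsilon/2$ once $x_0$ is taken large enough, and this bound holds uniformly in $n$. For the compact piece I would use $(1+x^2)^{1+l}\ge 1$ to reduce to the plain sup norm, and then invoke uniform convergence on finite intervals: every $g\in C_w^k[0,\infty)$ has polynomial growth, hence is of exponential type and lies in $C_A[0,\infty)$, so Theorem \ref{th1} (equivalently, the Korovkin test on $1,t,t^2$ via Lemma \ref{l2} as $\alpha\to0$) gives
\[
\sup_{0\le x\le x_0}\frac{|\mathcal{U}_n^{[\alpha]}(g;x)-g(x)|}{(1+x^2)^{1+l}}\le \sup_{0\le x\le x_0}|\mathcal{U}_n^{[\alpha]}(g;x)-g(x)|\longrightarrow 0 .
\]
Thus for fixed $x_0$ this term is below $\varepsilon/2$ for all $n\ge N$, and taking the maximum of the two regional estimates closes the argument.

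The delicate point is the uniform-in-$n$ envelope $\mathcal{U}_n^{[\alpha]}(1+t^2;x)\le K(1+x^2)$: it is precisely here that the restriction $\alpha\le \tfrac1n$ is essential, for it is what keeps the term $x\alpha$ from destroying the bound and, in turn, what makes the tail estimate independent of $n$ so that $x_0$ can be chosen before $N$. Everything else is routine once this bound and the compact-interval convergence of Theorem \ref{th1} are in hand.
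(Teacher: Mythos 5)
Your proof is correct and follows essentially the same route as the paper: split the supremum at a large fixed $x_0$, control the tail via the growth bound $|g(t)|\le M_g(1+t^2)$ combined with the moment estimate $\mathcal{U}_n^{[\alpha]}(1+t^2;x)\le K(1+x^2)$, and invoke Theorem \ref{th1} for uniform convergence on the compact part. If anything, your uniform-in-$n$ envelope constant $K$ is handled more carefully than the paper's corresponding step (its $L_2$ bound), but the decomposition and key ingredients are identical.
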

\begin{proof}
Consider $x_0$ be a fixed point, then we can right as
\begin{eqnarray}\label{eq1}
\nonumber\underset{x\geq0}\sup \frac{\left|\mathcal{U}_{n}^{[\alpha]}(g;x)-g(x)\right|}{(1+x^2)^{1+l}}&\leq & \underset{x\leq x_0}\sup \frac{\left|\mathcal{U}_{n}^{[\alpha]}(g;x)-g(x)\right|}{(1+x^2)^{1+l}}+\underset{x>x_0}\sup \frac{\left|\mathcal{U}_{n}^{[\alpha]}(g;x)-f(x)\right|}{(1+x^2)^{1+l}}\\
\nonumber &\leq & \|\mathcal{U}_{n}^{[\alpha]}(g;x)-g(x)\|+\|f\|_{w} \underset{x>x_0}\sup \frac{\left|\mathcal{U}_{n}^{[\alpha]}((1+t^2);x)\right|}{(1+x^2)^{1+l}} +\underset{x>x_0}\sup\frac{|g|}{(1+x^2)^{1+l}}\\
&=& L_1+L_2+L_3~(say). 
\end{eqnarray}
Here,
\begin{eqnarray*}
L_3=\underset{x>x_0}\sup\frac{|g|}{(1+x^2)^{1+l}}\leq \frac{\|g\|_{w}}{(1+x_0^2)^l},~~\text{(as~$|g(x)|\leq M(1+x^2))$}
\end{eqnarray*}
so, for large value of $x_0$, we can consider an arbitrary $\epsilon>0$ such that 
\begin{eqnarray}\label{eq2}
L_3=\frac{\|g\|_{w}}{(1+x_0^2)^l}\leq \frac{\epsilon}{3}.
\end{eqnarray}
Since, $\underset{n\to\infty}\lim \underset{x>x_0}\sup \frac{\left|\mathcal{U}_{n}^{[\alpha]}((1+t^2);x)\right|}{(1+x^2)}=1$, so let us consider for any arbitrary $\epsilon>0$, there exist $n_1\in\mathbb{N}$, such that 
\begin{eqnarray}\label{eq3}
L_2=\| g\|_w\underset{x>x_0}\sup \frac{\left|\mathcal{U}_{n}^{[\alpha]}((1+t^2);x)\right|}{(1+x^2)^{1+l}}\leq \frac{\| g\|_w}{(1+x^2)^l}\leq \frac{\| g\|_w}{(1+x_0^2)^l}<\frac{\epsilon}{3}.
\end{eqnarray}
Applying the theorem \ref{th1}, we can have
\begin{eqnarray}\label{eq4}
L_1=\|\mathcal{U}_{n}^{[\alpha]}(g;x)-g(x)\|_{C[0,x_0]}\|\leq \frac{\epsilon}{3}.
\end{eqnarray}
Combining (\ref{eq2}-\ref{eq4}) and using in (\ref{eq1}), we obtain our required result.
\end{proof}
\begin{theorem}
For $g\in C_w[0,\infty)$, one can obtain
\begin{eqnarray*}
|\mathcal{U}_{n}^{[\alpha]}(g;x)-g(x)|\leq 4\mathcal{N}_f(1+x^2) \Theta_{n,2}^{[\alpha]}(x)+2\omega_{l+1}\left(g;\sqrt{\Theta_{n,2}^{[\alpha]}}\right).
\end{eqnarray*}
\end{theorem}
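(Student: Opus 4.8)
The plan is to combine the positivity and normalisation of $\mathcal{U}_{n}^{[\alpha]}$ with a pointwise decomposition of the deviation $|g(t)-g(x)|$ that treats small and large displacements $|t-x|$ separately. Since $g\in C_w[0,\infty)$ with $w(x)=1+x^2$, there is a constant $\mathcal{N}_f>0$ such that $|g(x)|\leq\mathcal{N}_f(1+x^2)$ for all $x\geq 0$, and I would use the weighted modulus $\omega_{l+1}(g;\cdot)$ to quantify the oscillation of $g$ on a scale $\delta>0$ to be chosen at the end.

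First, because $\mathcal{U}_{n}^{[\alpha]}$ is a positive linear operator with $\mathcal{U}_{n}^{[\alpha]}(1;x)=1$ (Lemma \ref{l2}), moving the absolute value inside the operator in (\ref{O1}) gives
\begin{eqnarray*}
\left|\mathcal{U}_{n}^{[\alpha]}(g;x)-g(x)\right|\leq \mathcal{U}_{n}^{[\alpha]}\left(|g(t)-g(x)|;x\right).
\end{eqnarray*}
The heart of the argument is the pointwise bound
\begin{eqnarray*}
|g(t)-g(x)|\leq 4\mathcal{N}_f(1+x^2)(t-x)^2+\left(1+\frac{|t-x|}{\delta}\right)\omega_{l+1}(g;\delta),
\end{eqnarray*}
in which the first summand absorbs the contribution of large displacements through the growth estimate $|g(t)|,|g(x)|\leq\mathcal{N}_f(1+\cdot^{\,2})$, while the second summand is the standard displacement inequality for the weighted modulus $\omega_{l+1}$. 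Applying $\mathcal{U}_{n}^{[\alpha]}$ to this bound and using linearity, I would evaluate $\mathcal{U}_{n}^{[\alpha]}((t-x)^2;x)=\Theta_{n,2}^{[\alpha]}(x)$ directly from Lemma \ref{l4}, and estimate $\mathcal{U}_{n}^{[\alpha]}(|t-x|;x)\leq\bigl(\Theta_{n,2}^{[\alpha]}(x)\bigr)^{1/2}$ via the Cauchy--Schwarz inequality together with $\mathcal{U}_{n}^{[\alpha]}(1;x)=1$.

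Finally, choosing $\delta=\sqrt{\Theta_{n,2}^{[\alpha]}(x)}$ (legitimately depending on the fixed point $x$) turns the factor $1+\mathcal{U}_{n}^{[\alpha]}(|t-x|;x)/\delta$ into at most $2$, so the second summand contributes $2\,\omega_{l+1}(g;\sqrt{\Theta_{n,2}^{[\alpha]}})$ and the first contributes $4\mathcal{N}_f(1+x^2)\Theta_{n,2}^{[\alpha]}(x)$, which is exactly the asserted inequality. The main obstacle I anticipate is establishing the pointwise decomposition with the correct constant $4$: one has to split according to whether $|t-x|<\delta$ or $|t-x|\geq\delta$, invoke the monotonicity and growth properties of $\omega_{l+1}$, and control cross terms of the form $(t-x)^2(1+t^2)$ without raising the weight $(1+x^2)$ to a higher power. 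Keeping every weight factor linear in $1+x^2$ is precisely what forces the careful use of Cauchy--Schwarz rather than cruder bounds, and it is also where the fourth central moment $\Theta_{n,4}^{[\alpha]}(x)$ of Lemma \ref{l4} must be checked to remain comparable to $(1+x^2)\Theta_{n,2}^{[\alpha]}(x)$.
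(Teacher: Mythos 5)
Your proposal is correct and follows essentially the same route as the paper: the identical pointwise bound $|g(t)-g(x)|\leq 4\mathcal{N}_f(1+x^2)(t-x)^2+\bigl(1+|t-x|/\delta\bigr)\omega_{l+1}(g;\delta)$, followed by positivity, Cauchy--Schwarz for the first absolute moment, and the choice $\delta=\sqrt{\Theta_{n,2}^{[\alpha]}(x)}$. The only difference is that the paper simply cites this inequality from Ibikli--Gadjieva \cite{IEG} rather than proving it, so the case analysis you anticipate (and your worry about $\Theta_{n,4}^{[\alpha]}$, which is in fact never needed) is subsumed by that reference.
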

\begin{proof}
From \cite{IEG}, for $0\leq x\leq l$ and $u\geq 0$, it holds
\begin{eqnarray*}
|g(u)-g(x)|\leq 4\mathcal{N}_f(1+x^2)(u-x)^2+\left(1+\frac{|u-x|}{\theta}\right)\omega_{l+1}(g;\theta),~~\theta>0. 
\end{eqnarray*} 
Now, applying the operators defined by (\ref{O1}) and with the help of Cauchy-Schwarz inequality, we can obtain
\begin{eqnarray*}
|\mathcal{U}_{n}^{[\alpha]}(g;x)-g(x)|&\leq & 4\mathcal{N}_f(1+x^2) \Theta_{n,2}^{[\alpha]}+\left(1+\frac{\mathcal{U}_{n}^{[\alpha]}(|u-x|)}{\theta}\right)\omega_{l+1}(g;\theta)\\
&\leq & 4\mathcal{N}_f(1+x^2) \Theta_{n,2}^{[\alpha]}(x)+\left(1+\frac{\sqrt{\Theta_{n,2}^{[\alpha]}} }{\theta}\right)\omega_{l+1}(g;\theta)\\
&\leq & 4\mathcal{N}_f(1+x^2) \Theta_{n,2}^{[\alpha]}(x)+\left(1 +1\right)\omega_{l+1}\left( g;\sqrt{\Theta_{n,2}^{[\alpha]}}\right)\\
&=& 4\mathcal{N}_f(1+x^2) \Theta_{n,2}^{[\alpha]}(x)+2\omega_{l+1}(g;\sqrt{\Theta_{n,2}^{[\alpha]}}).
\end{eqnarray*}

\end{proof}

\section{Quantitative Approximation}\label{sec4}
For estimations of the degree of approximation in the weighted space $C_w^k[0,\infty)$, Ispir \cite{IN1} proposed the weighted modulus of continuity $\Delta(g;\xi)$  for any $\xi>0$, 
as follows:

\begin{eqnarray}
\Delta(g;\xi)=\underset{0\leq h\leq\xi,~0\leq x\leq\infty}\sup \frac{|g(x+h)-g(x)|}{(1+h^2)(1+x^2)},~~~~~~~g\in C_w^k[0,\infty). 
\end{eqnarray} 

 \begin{remark}
For $g\in C_w^k[0,\infty)$
 \begin{eqnarray*}
  \underset{\xi\to 0}\lim\Delta(g;\xi)=0.
 \end{eqnarray*}
 \end{remark}
 On can obtains as, $\Delta(f;\lambda\xi)\leq2(1+\xi^2)(1+\lambda)\Delta(f;\xi),~~\lambda>0$. 
Using the  weighted modulus of continuity and defined inequality, one can show that
 \begin{eqnarray}
 \nonumber|g(t)-g(x)|&\leq &(1+x^2)(1+(t-x)^2)\Delta(g;|t-x|)\\
 &\leq & 2\left(1+\frac{|t-x|}{\xi}\right)(1+\xi^2)(1+(t-x)^2)(1+x^2)\Delta(f;|t-x|),~\text{for every} f\in C_w^k[0,\infty).
 \end{eqnarray}
As the consequence of the weighted modulus of continuity, we determine the degree of approximation of the operators $\mathcal{U}_{n}^{[\alpha]}(g;x)$ in the weighted space $C_w^k[0,\infty)$.
\subsection{Quantitative Voronovskaya type theorem}
\begin{theorem}
Let $\mathrm{g}', \mathrm{g}''\in C_w^k[0,\infty)$ and for sufficiently large value of $n\in\mathbb{N}$, then for each $x\geq0$, we yield
\begin{eqnarray*}
n\left |\mathcal{U}_{n}^{[\alpha]}(\mathrm{g};x)-\mathrm{g}(x)-\mathrm{g}'(x)\Theta_{n,1}^{[\alpha]}-\frac{\mathrm{g}''(x)}{2!}\Theta_{n,2}^{[\alpha]}\right|=O(1)\Lambda\left(f;\sqrt{\frac{1}{n}}\right).
\end{eqnarray*}
\end{theorem}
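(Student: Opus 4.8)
The plan is to proceed via a Taylor expansion of $\mathrm{g}$ to second order and to identify the quantity inside the absolute value with the operator acting on the Peano remainder. I would write
\[
\mathrm{g}(t)=\mathrm{g}(x)+\mathrm{g}'(x)(t-x)+\frac{\mathrm{g}''(x)}{2!}(t-x)^2+\Psi(t,x)(t-x)^2,
\]
where $\Psi(t,x)\to 0$ as $t\to x$ and, by the mean value form of the remainder, $\Psi(t,x)=\tfrac{1}{2}\bigl(\mathrm{g}''(\psi)-\mathrm{g}''(x)\bigr)$ for some $\psi$ lying between $t$ and $x$. Applying $\mathcal{U}_{n}^{[\alpha]}$ to both sides and using linearity together with $\Theta_{n,1}^{[\alpha]}=\mathcal{U}_{n}^{[\alpha]}((t-x);x)$ and $\Theta_{n,2}^{[\alpha]}=\mathcal{U}_{n}^{[\alpha]}((t-x)^2;x)$, the left-hand side collapses exactly to
\[
n\,\bigl|\mathcal{U}_{n}^{[\alpha]}\bigl(\Psi(t,x)(t-x)^2;x\bigr)\bigr|,
\]
so the entire theorem becomes an estimate of this single remainder term.

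Next I would bound $\Psi$ by the weighted modulus of continuity of $\mathrm{g}''$. Since $|\psi-x|\leq|t-x|$, the inequality for $\Delta$ recorded just before the theorem yields, for any $\delta>0$,
\[
|\mathrm{g}''(\psi)-\mathrm{g}''(x)|\leq (1+x^2)\bigl(1+(t-x)^2\bigr)\Bigl(1+\tfrac{|t-x|}{\delta}\Bigr)\Delta(\mathrm{g}'';\delta).
\]
Multiplying by $\tfrac{1}{2}(t-x)^2$ and applying $\mathcal{U}_{n}^{[\alpha]}$ reduces the problem to estimating the operator on the majorant $\bigl(1+(t-x)^2\bigr)\bigl(1+|t-x|/\delta\bigr)(t-x)^2$. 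Expanding this product produces central moments of orders $2$ and $4$ together with odd-power terms of orders $3$ and $5$; the even orders are supplied directly by Lemma \ref{l4}, while the odd orders are controlled through the Cauchy--Schwarz inequality, for instance $\mathcal{U}_{n}^{[\alpha]}(|t-x|^3;x)\leq\bigl(\Theta_{n,2}^{[\alpha]}\bigr)^{1/2}\bigl(\Theta_{n,4}^{[\alpha]}\bigr)^{1/2}$ and $\mathcal{U}_{n}^{[\alpha]}(|t-x|^5;x)\leq\bigl(\Theta_{n,4}^{[\alpha]}\bigr)^{1/2}\bigl(\Theta_{n,6}^{[\alpha]}\bigr)^{1/2}$, so that every contributing term is bounded by powers of $\Theta_{n,2}^{[\alpha]},\Theta_{n,4}^{[\alpha]}$ and $\Theta_{n,6}^{[\alpha]}$.

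The decisive step is the calibration $\delta=1/\sqrt{n}$ together with the asymptotic orders of Lemma \ref{l1}, namely $n\Theta_{n,2}^{[\alpha]}\to 3x$, $n^2\Theta_{n,4}^{[\alpha]}\to 27x^2$ and $n^3\Theta_{n,6}^{[\alpha]}\to 405x^3$. With this choice the two dominant contributions after multiplying by $n$ are $n\Theta_{n,2}^{[\alpha]}=O(1)$ and $n\,\delta^{-1}\bigl(\Theta_{n,2}^{[\alpha]}\Theta_{n,4}^{[\alpha]}\bigr)^{1/2}=O(1)$, while the fourth-moment term and the fifth-moment term each carry an additional negative power of $n$ and hence vanish in the limit. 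Collecting all the $x$-dependent constants into the $O(1)$ factor then gives the bound $O(1)\,\Delta(\mathrm{g}'';1/\sqrt{n})$, which is the assertion. The main obstacle is precisely this last bookkeeping: one must verify that no term grows faster than $n^{-1}$, and closing the Cauchy--Schwarz estimate on the highest-order term is exactly where the sixth-moment order $n^3\Theta_{n,6}^{[\alpha]}\to 405x^3$ from Lemma \ref{l1} becomes indispensable.
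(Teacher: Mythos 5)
Your proposal is correct, and its skeleton coincides with the paper's: Taylor expansion with mean-value remainder $\tfrac{1}{2}\bigl(\mathrm{g}''(\psi)-\mathrm{g}''(x)\bigr)(t-x)^2$, reduction to $n\,\mathcal{U}_{n}^{[\alpha]}(|\zeta(t,x)|;x)$, a bound via the weighted modulus $\Delta(\mathrm{g}'';\cdot)$, and the calibration $\delta=1/\sqrt{n}$. Where you genuinely diverge is in how the majorant is processed. The paper performs a case discussion ($|t-x|<\xi$ versus $|t-x|\geq\xi$) on the factor $\bigl(1+|t-x|/\xi\bigr)\bigl(1+(t-x)^2\bigr)$, replacing it for $\xi\in(0,1)$ by a constant multiple of $1+(t-x)^4/\xi^4$; after multiplication by $(t-x)^2$ this leaves only the \emph{even} central moments of orders $2$ and $6$, which Lemma \ref{l1} supplies directly as $O(1/n)$ and $O(1/n^3)$, and no Cauchy--Schwarz step is needed. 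You instead expand the product, which produces the moments of orders $2$ and $4$ plus the odd powers $|t-x|^3/\delta$ and $|t-x|^5/\delta$, and you control the odd ones by Cauchy--Schwarz, $\mathcal{U}_{n}^{[\alpha]}(|t-x|^3;x)\leq\bigl(\Theta_{n,2}^{[\alpha]}\Theta_{n,4}^{[\alpha]}\bigr)^{1/2}$ and $\mathcal{U}_{n}^{[\alpha]}(|t-x|^5;x)\leq\bigl(\Theta_{n,4}^{[\alpha]}\Theta_{n,6}^{[\alpha]}\bigr)^{1/2}$; your bookkeeping ($n\Theta_{n,2}^{[\alpha]}=O(1)$, $n\delta^{-1}(\Theta_{n,2}^{[\alpha]}\Theta_{n,4}^{[\alpha]})^{1/2}=O(1)$, the order-$4$ and order-$5$ terms vanishing like $1/n$) is accurate. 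The trade-off: the paper's case-splitting is the standard trick that keeps everything in even moments and avoids Cauchy--Schwarz, while your route needs the fourth-moment asymptotics $n^2\Theta_{n,4}^{[\alpha]}\to 27x^2$ in an essential way (the paper never uses $\Theta_{n,4}^{[\alpha]}$ here) but dispenses with the case analysis; both rest on the same sixth-moment order $n^3\Theta_{n,6}^{[\alpha]}\to 405x^3$ and yield the same conclusion $O(1)\,\Delta\bigl(\mathrm{g}'';1/\sqrt{n}\bigr)$. One cosmetic point: the inequality you quote for $\Delta$ omits the factor $2(1+\delta^2)$ appearing in the paper's version; since $\delta\leq 1$ this is a harmless absolute constant absorbed into $O(1)$, but it should be carried along for completeness.
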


\begin{proof}
By Taylor's expansion, one can obtain
\begin{eqnarray}
\mathrm{g}(t)-\mathrm{g}(x)=\mathrm{g}'(x)(t-x)+\frac{\mathrm{g}''(x)}{2}(t-x)^2+\zeta(t,x),
\end{eqnarray}
where $\zeta(t,x)=\frac{\mathrm{g}''(\theta)-\mathrm{g}''(x)}{2!}(\theta-x)^2$ and $\zeta\in (t,x)$.
Applying operators (\ref{O1}) on both sides to above expansion, then one can obtains 
\begin{eqnarray}\label{n1}
n\left|\mathcal{U}_{n}^{[\alpha]}(\mathrm{g};x)-\mathrm{g}(x)-\mathrm{g}'(x)\Theta_{n,1}^{[\alpha]}-\frac{\mathrm{g}''(x)}{2}\Theta_{n,2}^{[\alpha]}\right|\leq n\mathcal{U}_{n}^{[\alpha]}(|\eta(t,x)|;x).
\end{eqnarray}
Now using the property of weighted modulus of continuity, we get
\begin{eqnarray*}
\frac{\mathrm{g}''(\theta)-\mathrm{g}''(x)}{2} 
&\leq & \left(1+\frac{|t-x|}{\xi} \right)(1+\xi^2)(1+(t-x)^2)(1+x^2)\Delta(f'',\xi)
\end{eqnarray*}
and also 
\begin{eqnarray}
\left|\frac{\mathrm{g}''(\theta)-\mathrm{g}''(x)}{2}\right| &\leq &  
\begin{cases}
    2(1+\xi^2)^2(1+x^2)\Delta(\mathrm{g}'',\xi),& |t-x|<\xi,\\
    2(1+\xi^2)^2(1+x^2)\frac{(t-x)^4}{\xi^4}\Delta(\mathrm{g}'',\xi),& |t-x|\geq\xi.
\end{cases} 
\end{eqnarray}
Now for $\xi\in(0,1)$, we get
\begin{eqnarray}
\left|\frac{\mathrm{g}''(\theta)-\mathrm{g}''(x)}{2}\right| &\leq & 8(1+x^2)\left(1+\frac{(t-x)^4}{\xi^4}\right)\Delta(\mathrm{g}'',\xi). 
\end{eqnarray}
Hence, $$(|\zeta(t,x)|;x)\leq 8(1+x^2)\left((t-x)^2+\frac{(t-x)^6}{\xi^4}\right)\Delta(\mathrm{g}'',\xi).$$
Thus, applying the Lemma \ref{l1}
\begin{eqnarray*}
\mathcal{U}_{n}^{[\alpha]}(| \zeta(t,x)|;x)&\leq & 8(1+x^2)\Delta(\mathrm{g}'',\xi)\left(\mathcal{U}_{n}^{[\alpha]}((t-x)^2;x)+\frac{\mathcal{U}_{n}^{[\alpha]}((t-x)^6;x)}{\xi^4}\right) \\
&\leq & 8(1+x^2)\Delta(\mathrm{g}'',\xi) \left(O\left(\frac{1}{n} \right)+\frac{1}{\xi^4} O\left(\frac{1}{n^3} \right) \right),~~\text{as}~n\to\infty.
\end{eqnarray*}
Choose, $\xi=\sqrt{\frac{1}{n}}$, then
\begin{eqnarray}
\mathcal{U}_{n}^{[\alpha]}(| \zeta(t,x)|;x)\leq 8 O\left(\sqrt{\frac{1}{n}} \right)\Delta\left(\mathrm{g}'',\sqrt{\frac{1}{n}}\right)(1+x^2).
\end{eqnarray}
Hence, we reach on 
\begin{eqnarray}\label{n2}
n\mathcal{U}_{n}^{[\alpha]}(|\zeta(t,x)|;x)=O(1)\Delta\left(\mathrm{g}'', \sqrt{\frac{1}{n}}\right).
\end{eqnarray}
By (\ref{n1}) and (\ref{n2}), we obtain the required result.
\end{proof}

\subsection{Gr$\ddot{\text{u}}$ss Voronovskaya type theorem}
\begin{theorem}\label{th2}
Let $\mathsf{f}\in C_w^k[0,\infty)$ then for $\mathsf{f}', \mathsf{f}'', \mathsf{g}', \mathsf{g}''\in C_w^k[0,\infty)$, it holds
\begin{eqnarray*}
\underset{n\to\infty}\lim n\left(\mathcal{U}_{n}^{[\alpha]}(\mathsf{f}\mathsf{g};x)-\mathcal{U}_{n}^{[\alpha]}(\mathsf{f};x)\mathcal{U}_{n}^{[\alpha]}(\mathsf{g};x) \right)= 2x \mathsf{f}'(x)\mathsf{g}'(x). 
\end{eqnarray*}
\end{theorem}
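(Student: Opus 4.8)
The plan is to deduce this Gr\"uss--Voronovskaya identity directly from the quantitative Voronovskaya type theorem proved just above, combined with the central-moment asymptotics of Lemma \ref{l4}. The key observation is that for any $h$ with $h',h''\in C_w^k[0,\infty)$, that theorem furnishes the pointwise expansion
\begin{equation*}
\mathcal{U}_{n}^{[\alpha]}(h;x)=h(x)+h'(x)\,\Theta_{n,1}^{[\alpha]}(x)+\tfrac{1}{2}\,h''(x)\,\Theta_{n,2}^{[\alpha]}(x)+\varepsilon_n^{h}(x),
\end{equation*}
whose remainder satisfies $n\,\varepsilon_n^{h}(x)=O(1)\,\Delta\!\left(h'',\sqrt{1/n}\right)\to 0$, because $\Delta(h'',\xi)\to 0$ as $\xi\to 0$. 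First I would record this expansion for the three functions $\mathsf{f}$, $\mathsf{g}$ and their product $\mathsf{f}\mathsf{g}$ (the hypotheses guarantee that $\mathsf{f}',\mathsf{f}'',\mathsf{g}',\mathsf{g}''$ lie in $C_w^k[0,\infty)$), so that each error term is $o(1/n)$ after multiplication by $n$.

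Next I would multiply the expansions of $\mathcal{U}_{n}^{[\alpha]}(\mathsf{f};x)$ and $\mathcal{U}_{n}^{[\alpha]}(\mathsf{g};x)$. Since $\Theta_{n,1}^{[\alpha]}=1/n$ and $\Theta_{n,2}^{[\alpha]}=O(1/n)$, every product of two of the quantities $\{\Theta_{n,1}^{[\alpha]},\Theta_{n,2}^{[\alpha]},\varepsilon_n^{\mathsf{f}},\varepsilon_n^{\mathsf{g}}\}$ is $o(1/n)$, and the terms $\mathsf{f}\,\varepsilon_n^{\mathsf{g}}$, $\mathsf{g}\,\varepsilon_n^{\mathsf{f}}$ are likewise $o(1/n)$. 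Hence the only surviving contributions are
\begin{equation*}
\mathcal{U}_{n}^{[\alpha]}(\mathsf{f};x)\,\mathcal{U}_{n}^{[\alpha]}(\mathsf{g};x)=\mathsf{f}\mathsf{g}+(\mathsf{f}'\mathsf{g}+\mathsf{f}\mathsf{g}')\,\Theta_{n,1}^{[\alpha]}+\tfrac{1}{2}(\mathsf{f}''\mathsf{g}+\mathsf{f}\mathsf{g}'')\,\Theta_{n,2}^{[\alpha]}+o\!\left(\tfrac{1}{n}\right).
\end{equation*}
On the other hand, applying the Voronovskaya expansion to $\mathsf{f}\mathsf{g}$ and using $(\mathsf{f}\mathsf{g})''=\mathsf{f}''\mathsf{g}+2\mathsf{f}'\mathsf{g}'+\mathsf{f}\mathsf{g}''$ gives
\begin{equation*}
\mathcal{U}_{n}^{[\alpha]}(\mathsf{f}\mathsf{g};x)=\mathsf{f}\mathsf{g}+(\mathsf{f}'\mathsf{g}+\mathsf{f}\mathsf{g}')\,\Theta_{n,1}^{[\alpha]}+\tfrac{1}{2}(\mathsf{f}''\mathsf{g}+2\mathsf{f}'\mathsf{g}'+\mathsf{f}\mathsf{g}'')\,\Theta_{n,2}^{[\alpha]}+o\!\left(\tfrac{1}{n}\right).
\end{equation*}
Subtracting, the zeroth-order term $\mathsf{f}\mathsf{g}$, the $\Theta_{n,1}^{[\alpha]}$ terms, and the $\tfrac{1}{2}(\mathsf{f}''\mathsf{g}+\mathsf{f}\mathsf{g}'')\Theta_{n,2}^{[\alpha]}$ terms all cancel exactly, leaving
\begin{equation*}
\mathcal{U}_{n}^{[\alpha]}(\mathsf{f}\mathsf{g};x)-\mathcal{U}_{n}^{[\alpha]}(\mathsf{f};x)\,\mathcal{U}_{n}^{[\alpha]}(\mathsf{g};x)=\mathsf{f}'(x)\,\mathsf{g}'(x)\,\Theta_{n,2}^{[\alpha]}(x)+o\!\left(\tfrac{1}{n}\right).
\end{equation*}

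Finally I would multiply by $n$ and pass to the limit. Using the moment asymptotic $\lim_{n\to\infty}n\,\Theta_{n,2}^{[\alpha]}(x)=2x$ (valid in the regime $\alpha=\alpha(n)\to 0$ with $n\alpha\to 0$, which is forced by the target constant $2x$ through the formula $n\Theta_{n,2}^{[\alpha]}=n\alpha x+2x+2/n$ of Lemma \ref{l4}), and the fact that the collected error is $o(1/n)$, I obtain
\begin{equation*}
\lim_{n\to\infty} n\left(\mathcal{U}_{n}^{[\alpha]}(\mathsf{f}\mathsf{g};x)-\mathcal{U}_{n}^{[\alpha]}(\mathsf{f};x)\,\mathcal{U}_{n}^{[\alpha]}(\mathsf{g};x)\right)=\mathsf{f}'(x)\,\mathsf{g}'(x)\cdot 2x=2x\,\mathsf{f}'(x)\,\mathsf{g}'(x),
\end{equation*}
as required. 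The hard part will be the bookkeeping in the second step: one must be sure that every cross term in the product, as well as each Voronovskaya remainder, is genuinely $o(1/n)$ after multiplication by $n$, and that the product $\mathsf{f}\mathsf{g}$ together with its first two derivatives satisfies the weighted growth condition needed for the Voronovskaya expansion to hold at the fixed point $x$; since the statement is pointwise, working at a fixed $x\ge 0$ (so that $\mathsf{f}(x),\mathsf{g}(x)$ and their derivatives act as bounded constants) removes the apparent difficulty that products of $C_w^k$-functions may grow like $(1+x^2)^2$.
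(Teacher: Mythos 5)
Your proposal is correct and follows essentially the same route as the paper: the paper's own proof also reduces the Gr\"uss--Voronovskaya statement to three applications of the quantitative Voronovskaya theorem (to $\mathsf{f}$, $\mathsf{g}$, and $\mathsf{f}\mathsf{g}$, using $(\mathsf{f}\mathsf{g})''=\mathsf{f}''\mathsf{g}+2\mathsf{f}'\mathsf{g}'+\mathsf{f}\mathsf{g}''$) together with the limits $\mathcal{U}_{n}^{[\alpha]}(\mathsf{f};x)\to\mathsf{f}(x)$ and $n\Theta_{n,2}^{[\alpha]}(x)\to 2x$, the only difference being that the paper organizes the cancellation through an exact algebraic identity whose pieces are the three Voronovskaya remainders plus correction terms, rather than by multiplying the two expansions and collecting $o(1/n)$ terms as you do. Your explicit observation that the constant $2x$ forces the regime $n\alpha\to 0$ (the paper only says ``$\alpha\to 0$,'' even though its Lemma~\ref{l1} with $\alpha=1/n$ would give $n\Theta_{n,2}^{[\alpha]}(x)\to 3x$), and your flagging of the growth issue for $\mathsf{f}\mathsf{g}$ in the weighted space, make explicit two points the paper glosses over.
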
 
\begin{proof}
By making suitable arrangement and using well known properties of derivative of multiplication of two functions, we get
\begin{eqnarray*}
 n\left(\mathcal{U}_{n}^{[\alpha]}(\mathsf{f}\mathsf{g};x)-\mathcal{U}_{n}^{[\alpha]}(\mathsf{f};x)\mathcal{U}_{n}^{[\alpha]}(\mathsf{g};x) \right)&=& n\Bigg\{\Bigg(\mathcal{U}_{n}^{[\alpha]}(\mathsf{f}\mathsf{g};x)-\mathsf{f}(x)\mathsf{g}(x)-(\mathsf{f}\mathsf{g})'\Theta_{n,1}^{[\alpha]}\\
 &&-\frac{(\mathsf{f}\mathsf{g})''}{2!}\Theta_{n,2}^{[\alpha]}\Bigg)-\mathsf{g}(x)\Bigg(\mathcal{U}_{n}^{[\alpha]}(\mathsf{f};x)-\mathsf{f}(x)\\
 &&-\mathsf{f}'(x)\Theta_{n,1}^{[\alpha]}-\frac{\mathsf{f}''(x)}{2!}\Theta_{n,2}^{[\alpha]} \Bigg)\\
 &&-\mathcal{U}_{n}^{[\alpha]}(\mathsf{f};x)\Bigg(\mathcal{U}_{n}^{[\alpha]}(\mathsf{g};x)-\mathsf{g}(x)-\mathsf{g}'(x)\Theta_{n,1}^{[\alpha]}\\
 &&-\frac{\mathsf{g}''(x)}{2!}\Theta_{n,2}^{[\alpha]} \Bigg)+\frac{\mathsf{g}''(x)}{2!}\mathcal{U}_{n}^{[\alpha]}((t-x)^2;x)\\
 &&\times \left(\mathsf{f}- \mathcal{U}_{n}^{[\alpha]}(\mathsf{f};x)\right)+\mathsf{f}'(x)\mathsf{g}'(x)\Theta_{n,2}^{[\alpha]}\\
 &&+ \mathsf{g}'(x)\Theta_{n,1}^{[\alpha]}\left(\mathsf{f}- \mathcal{U}_{n}^{[\alpha]}(\mathsf{f};x)\right) \Bigg\}.
\end{eqnarray*}
For sufficiently large value of $n$, i.e. for $n\to\infty$, $\alpha\to 0$. So with the help of Theorem \ref{th1} and \ref{th2} and talking the limit on both side of the above equation, we obtain the required result.  
\end{proof}

\section{Graphical and Numerical representation}\label{sec5}
This section consists, the graphical approcah and numerical analysis for the convergence of the operators to the function. 

\begin{example}
Consider the function is $g(x)=x^2\sin{\pi x}$ with $x\in[0,2]$ and choosing $\alpha=\frac{1}{60}$. Then the corresponding operators for $n=15,~35$ are  $\mathcal{U}_{15}^{[\alpha]}(g;x)$(green), $\mathcal{U}_{35}^{[\alpha]}(g;x)$(blue) respectively. One can observe that the better  rate of convergence can be obtained by graphical representation \ref{F1}, which is given below.  
\begin{figure}[h!]
    \centering 
    \includegraphics[width=.52\textwidth]{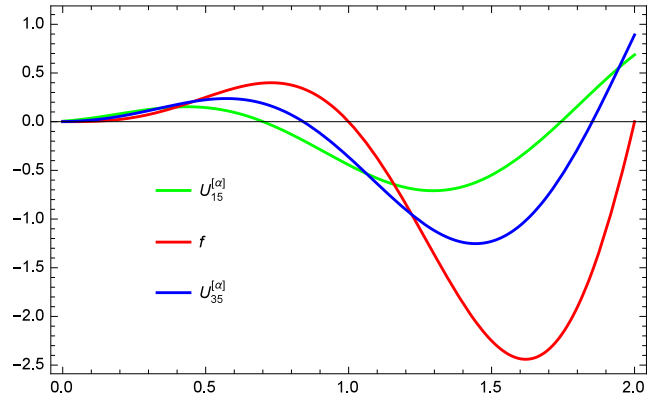}   
    \caption[Description in LOF, taken from~\cite{source}]{The convergence of the operators $\mathcal{U}_{n}^{[\alpha]}(g;x)$ to the function $f(x)(red)$.}
    \label{F1}
\end{figure}
\end{example}

\pagebreak
\begin{example}
Let us consider the function $f(x)=te^{-7x}$ (red) for which, the rate of convergence of the defined operators (\ref{O1}) is  discussed by taking different values of $n\in\mathbb{N}$. Choosing $n=5,10,20,25,35,40,45$, then the corresponding operators are represented by blue, green, cyan, brown, yellow, magenta, purple colors          receptively in the given Figure \ref{F2}. Here we take $\alpha=\frac{1}{45}$. 

\begin{figure}[h!]
    \centering 
    \includegraphics[width=.52\textwidth]{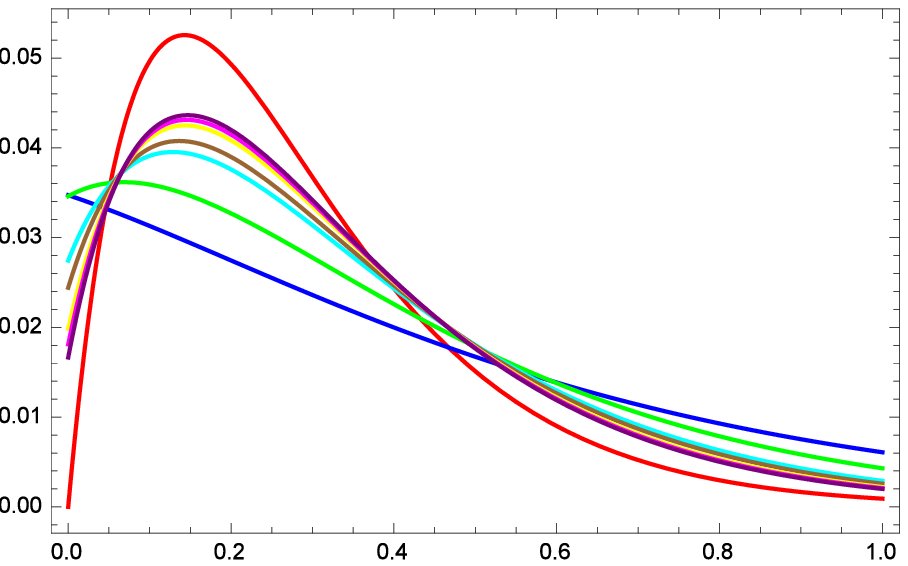}   
    \caption[Description in LOF, taken from~\cite{source}]{The convergence of the operators $\mathcal{U}_{n}^{[\alpha]}(f;x)$ to the function $f(x)(red)$.}
    \label{F2}
\end{figure}
\end{example}

\begin{example}
For the same function, which has been taken in the above example, we observe by changing the different values of $\alpha$ i.e. choosing  $\alpha=\frac{1}{10},\frac{1}{20},\frac{1}{40},\frac{1}{60},\frac{1}{80}$ for which the the corresponding operators for the fixed value of $n=10$ are represented by black, orange, pink, blue, green colors in given Figure \ref{F3}.
\begin{figure}[h!]
    \centering 
    \includegraphics[width=.52\textwidth]{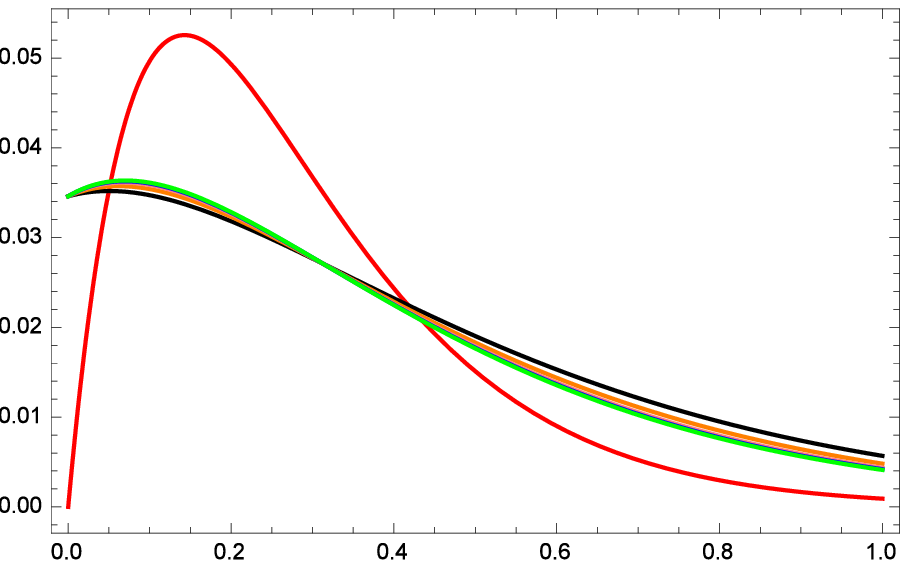}   
    \caption[Description in LOF, taken from~\cite{source}]{The convergence of the operators $\mathcal{U}_{n}^{[\alpha]}(f;x)$ to the function $f(x)(red)$.}
    \label{F3}
\end{figure}
\end{example}
\textbf{Concluding Remark:}
1. By the above Figures (\ref{F1},\ref{F2}), one can observe that as the value of $n$ is increased, the approximation is going to better while on taking the particular value of $\alpha$. i.e. by the suitable choice of $\alpha$, we can show the better approximation by taking large value of $n$. But in Figure \ref{F3}, convergence can be seen, when the  value of $n$ is fixed and the value of $\alpha$ is decreased.  \\

2. On choosing appropriate values of $\alpha$ and $n$, we can find the better approximation.

\begin{example}
Let the function  $f=(x^2+1)e^x$, take $$n=5,10,20,25,30,40,50,70,90,130,150,190,240,250,400,500$$
and $\alpha=\frac{1}{5},\frac{1}{10},\frac{1}{20},\frac{1}{30},\frac{1}{50},\frac{1}{100},\frac{1}{150},\frac{1}{200},\frac{1}{250},\frac{1}{500}$ then we obtain the approximation by given table.
\end{example}
\pagebreak
\begin{table}[ht]
\centering
\begin{tabular}{|c|c|c|c|c|c|c|c|c|c|c|}
\hline 
$n\downarrow$, $\alpha\to$ & $\frac{1}{5}$ & $\frac{1}{10}$ & $\frac{1}{20}$ & $\frac{1}{30}$ & $\frac{1}{50}$ & $\frac{1}{100}$ & $\frac{1}{150}$ & $\frac{1}{200}$ & $\frac{1}{250}$ & $\frac{1}{500}$\\  
\hline 
5 & 2.01244 & 1.85335 & 1.7963 & 1.77966 & 1.7671 & 1.75808 & 1.75515 & 1.7537 & 1.75283 & 1.7511 \\ 
\hline 
10 & - & 1.30625 & 1.28073 & 1.2732 & 1.26749 & 1.26338 & 1.26204 & 1.26137 & 1.26098 & 1.26019 \\ 
\hline 
20 & - & - & 1.13228 & 1.12711 & 1.12318 & 1.12034 & 1.11941 & 1.11896 & 1.11868 & 1.11814 \\ 
\hline
25 & - & - & - & 1.1034 & 1.09975 & 1.09711 & 1.09625 & 1.09582 & 1.09557 & 1.09506 \\ 
\hline 
30 & - & - & - & 1.08846 & 1.08497 & 1.08246 & 1.08164 & 1.08124 & 1.08099 & 1.08051 \\ 
\hline 
40 & - & - & - & - & 1.0674 & 1.06504 & 1.06427 & 1.06388 & 1.06366 & 1.0632 \\ 
\hline 
50 & - & - & - & - & 1.05732 & 1.05504 & 1.05429 & 1.05392 & 1.0537 & 1.05326 \\ 
\hline 
70 & - & - & - & - & - & 1.044 & 1.04328 & 1.04293 & 1.04272 & 1.0423 \\ 
\hline 
90 & - & - & - & - & - & 1.03804 & 1.03734 & 1.037 & 1.03679 & 1.03638 \\ 
\hline 
130 & - & - & - & - & - & - & 1.03108 & 1.03074 & 1.03054 & 1.03014 \\ 
\hline 
150 & - & - & - & - & - & - & 1.02923 & 1.0289 & 1.0287 & 1.0283 \\ 
\hline 
190 & - & - & - & - & - & - & - & 1.02639 & 1.02619 & 1.02579 \\ 
\hline 
240 & - & - & - & - & - & - & - & - & 1.02424 & 1.02385 \\ 
\hline 
250 & - & - & - & - & - & - & - & - & 1.02395 & 1.02356 \\ 
\hline
400 & - & - & - & - & - & - & - & - & - & 1.02093 \\ 
\hline  
500 & - & - & - & - & - & - & - & - & - & 1.02006 \\ 
\hline 
\end{tabular} 
\caption{Effect of $\alpha$ in the convergence of the operators $\mathcal{U}_{n}^{[\alpha]}(f;x)$}\label{t1}
\end{table}

\textbf{Observation:} By the above Table \ref{t1}, we can observe that as the value of $\alpha$ is decreased, the error is going to least for particular value of $n$. And the same time, if we see the table, we can observe that on increasing the values of $n$, the errors are decreased for particular value of $\alpha$ (excluding the `dash-'). These process is running on a particular point of $x$.

\section{A-Statistical Convergence of the defined operators}\label{sec6}
This section contains statistical convergence theorem. We establish some approximation property to study statistical convergence.  The basic idea of statistical convergence was first introduced by Fast \cite{FH} even though the first publication related to statistical convergence was in 1935 (published in Warsaw) and credit goes to Zygmund in his monograph also independently work is seen in paper Steinhaus \cite{SH} in 1951. In paper \cite{SIJ}, Schoenberg reintroduced the statistical convergence and now a days it has become an area of active research in approximation theory. \\

In 2002, it has been seen the use of statistical convergence in Approximation theory by Gadjiev \cite{GAD1}. We recall the symbol from \cite{q6}, as $A_{ni}=\{a_{ni}\}$ is infinite non-negative infinite  summability matrix. Here we denote $A$-transform of matrix $A_{ni}$ for a given sequence $\{x_i\}$ by
\begin{eqnarray*}
A_{ni} x_i=\sum\limits_{i=0}^\infty  a_{ni}x_i,
\end{eqnarray*}
provided $(A_{ni} x_i)$ converges for each $n\in\mathbb{N}$. Now if 
$\underset{n\to\infty}\lim(A_{ni} x_i)=\sigma$ whenever $\lim x_i=\sigma$ \cite{GH} then $A_{ni}$ is said to be regular. And also, $\underset{n\to\infty}\lim a_{ni}=0,~\forall~i\in\mathbb{N}$. In this case, 
$\{x_i\}$ is said to be  $A$-statistically convergent, i.e. for every $\epsilon>0$, $\underset{n\to\infty}\lim \sum_{\{i:||x_i-\sigma|\geq\epsilon\}}a_{ni}=0$  and it is written as $st_A-\lim x_i=0$. For more information, we refer to reader to see \cite{CJ,FAS,FJA,KE,MHI}.

\begin{theorem}
Consider $A_{ni}=\{a_{ni}\}$ be non-negative regular summability matrix and for each $f\in C_w^k[0,\infty)$ then for every $x\in[0,\infty)$, we have
\begin{eqnarray*}
st_A-\underset{n\to\infty}\lim \|\mathcal{U}_{n}^{[\alpha]}(f;x)-f(x)\|_{w(x)}=0.
\end{eqnarray*}
\end{theorem}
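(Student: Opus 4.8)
The plan is to deduce the statement from the $A$-statistical analogue of the weighted Korovkin theorem (the statistical counterpart of the weighted result of Gadjiev recalled earlier in this section; see \cite{GAD1,q6}). That theorem asserts that, for a sequence of positive linear operators $\mathcal{L}_n:C_w[0,\infty)\to B_w[0,\infty)$, the conclusion $st_A-\underset{n\to\infty}\lim\|\mathcal{L}_n(f;x)-f(x)\|_w=0$ holds for every $f\in C_w^k[0,\infty)$ as soon as the three test-function conditions
\begin{eqnarray*}
st_A-\underset{n\to\infty}\lim\|\mathcal{U}_{n}^{[\alpha]}(t^r;x)-x^r\|_w=0,\qquad r=0,1,2,
\end{eqnarray*}
are verified. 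Thus the whole proof reduces to checking these three conditions, and the explicit moments from Lemma \ref{l2} are exactly what is needed.

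First I would dispose of $r=0$: since $\mathcal{U}_{n}^{[\alpha]}(1;x)=1$ by Lemma \ref{l2}, the difference vanishes identically and the condition is trivial. For $r=1$, Lemma \ref{l2} gives $\mathcal{U}_{n}^{[\alpha]}(t;x)-x=\frac{1}{n}$, whence
\begin{eqnarray*}
\|\mathcal{U}_{n}^{[\alpha]}(t;x)-x\|_w=\frac{1}{n}\,\underset{x\geq 0}\sup\frac{1}{1+x^2}\leq\frac{1}{n}.
\end{eqnarray*}
For $r=2$, I would write $\mathcal{U}_{n}^{[\alpha]}(t^2;x)-x^2=\alpha x+\frac{4x}{n}+\frac{2}{n^2}$ and use $\underset{x\geq 0}\sup\frac{x}{1+x^2}=\frac{1}{2}$ together with $\underset{x\geq 0}\sup\frac{1}{1+x^2}=1$ to obtain
\begin{eqnarray*}
\|\mathcal{U}_{n}^{[\alpha]}(t^2;x)-x^2\|_w\leq\frac{\alpha}{2}+\frac{2}{n}+\frac{2}{n^2}.
\end{eqnarray*}

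It then remains to argue the $A$-statistical nullity of the right-hand sides. The sequences $\frac{1}{n},\frac{1}{n^2}$ converge to zero in the ordinary sense, and since $A_{ni}=\{a_{ni}\}$ is a non-negative regular summability matrix, ordinary convergence implies $A$-statistical convergence to the same limit, so these contributions are $st_A$-null. The genuinely statistical ingredient is the term $\frac{\alpha}{2}=\frac{\alpha_n}{2}$: under the standing hypothesis that $\alpha_n$ is $A$-statistically null (the natural statistical relaxation of $0\leq\alpha_n\leq\frac{1}{n}$), one has $st_A-\underset{n\to\infty}\lim\frac{\alpha_n}{2}=0$. Because a finite linear combination of $A$-statistically null sequences is again $A$-statistically null, and because the nonnegative sequence $\|\mathcal{U}_{n}^{[\alpha]}(t^2;x)-x^2\|_w$ is dominated by $\frac{\alpha_n}{2}+\frac{2}{n}+\frac{2}{n^2}$, the comparison (squeeze) property of $A$-statistical convergence forces its $st_A$-limit to be zero. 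This verifies all three test-function conditions, and the weighted $A$-statistical Korovkin theorem yields the claim.

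The main obstacle is concentrated in the $r=2$ case: one must correctly set up the statistical framework, i.e. have available the $A$-statistical weighted Korovkin theorem on $C_w^k[0,\infty)$ and invoke it cleanly, and one must control the factor $\alpha_n$ statistically rather than pointwise, using the stability of $A$-statistical limits under finite sums and under domination. Everything else reduces to the routine supremum estimates supplied by Lemma \ref{l2}.
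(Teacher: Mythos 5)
Your proposal is correct and follows essentially the same route as the paper: reduce to the three Korovkin test functions via the weighted $A$-statistical Korovkin theorem, bound $\|\mathcal{U}_{n}^{[\alpha]}(t^r;x)-x^r\|_w$ using Lemma \ref{l2}, and conclude by domination of the error by a null sequence. The only cosmetic difference is in the $r=2$ case: the paper simply uses the standing hypothesis $0\leq\alpha_n\leq\frac{1}{n}$ to absorb $\alpha x$ into the deterministic bound $\frac{2}{n^2}+\frac{5}{2n}$ (and then compares index sets directly), whereas you keep $\frac{\alpha_n}{2}$ as a separate term and invoke statistical nullity of $\alpha_n$ --- which is automatic under the paper's assumption, so nothing is lost.
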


\begin{proof}
If we show
\begin{eqnarray}\label{eq6}
st_A-\underset{n\to\infty}\lim \|\mathcal{U}_{n}^{[\alpha]}(t^r;x)-x^r\|_{w(x)}=0~~~\text{for}~~r=0,1,2
\end{eqnarray} 
then we obtain the required result i.e. the proof will be done.

It is clear that
\begin{eqnarray*}
st_A-\underset{n\to\infty}\lim \|\mathcal{U}_{n}^{[\alpha]}(1;x)-1\|_{w(x)}=0.
\end{eqnarray*}
Also
\begin{eqnarray*}
 \|\mathcal{U}_{n}^{[\alpha]}(t;x)-x\|_{w(x)}&=&\underset{x\geq0}\sup\frac{1}{1+x^2} \frac{1}{n} \\
 &\leq & \frac{1}{n}.
\end{eqnarray*}
So we define the following sets for given $\epsilon>0$, as
\begin{eqnarray*}
\mathcal{V}_1&=&\{n:\|\mathcal{U}_{n}^{[\alpha]}(t;x)-x\|\geq \epsilon\}\\
\mathcal{V}_2&=&\left\{n:\frac{1}{n}\geq \epsilon\right\}.
\end{eqnarray*}
Obviously, $\mathcal{V}_1\subset\mathcal{V}_2$ and hence, $\sum_{i\in \mathcal{V}_1}a_{ni}\leq \sum_{i\in \mathcal{V}_2}a_{ni}$. Therefore
\begin{eqnarray*}
st_A-\underset{n\to\infty}\lim \|\mathcal{U}_{n}^{[\alpha]}(t;x)-x\|_{w(x)}=0.
\end{eqnarray*}
Further,
\begin{eqnarray*}
 \|\mathcal{U}_{n}^{[\alpha]}(t^2;x)-x^2\|_{w(x)}&=&\underset{x\geq0}\sup\frac{1}{1+x^2} \left(\frac{2+4nx+n^2x^2+n^2x\alpha}{n^2}-x^2 \right)\\
 &\leq &  \left(\frac{2}{n^2}+\frac{5}{2n} \right).
\end{eqnarray*}
Since R.H.S. of above inequality tends to zero as $n\to\infty$. So for given $\epsilon>0$, we can consider following sets, which show as
\begin{eqnarray*}
\mathcal{W}_1&=&\{n:\|\mathcal{U}_{n}^{[\alpha]}(t^2;x)-x^2\|\geq \epsilon\}\\
\mathcal{W}_2&=& \left\{n:\frac{2}{n^2}\geq \frac{\epsilon}{2}\right\}\\
\mathcal{W}_3&=&\left\{n:\frac{5}{2n}\geq \frac{\epsilon}{2}\right\},
\end{eqnarray*}
which implies that $\mathcal{W}_1\subset\mathcal{W}_2\cup\mathcal{W}_3 $, and hence $\sum_{i\in \mathcal{W}_1}a_{ni}\leq \sum_{i\in \mathcal{W}_2}a_{ni}+\sum_{i\in \mathcal{W}_3}a_{ni}$. Therefore
\begin{eqnarray*}
st_A-\underset{n\to\infty}\lim \|\mathcal{U}_{n}^{[\alpha]}(t^2;x)-x^2\|_{w(x)}=0.
\end{eqnarray*}
Thus the the proof is completed.
\end{proof}

\begin{corollary}
Let $w_\zeta(x)\geq 1$ be a continuous function such that $\underset{|x|\to\infty}\lim\frac{w(x)}{w_\zeta(x)}\to 0$ then for each $f\in C_w^k[0,\infty)$ with $\{a_{ni}\}$ be non-negative regular summability matrix, we have
\begin{eqnarray*}
st_A-\underset{n\to\infty}\lim \|\mathcal{U}_{n}^{[\alpha]}(t^2;x)-x^2\|_{w_\zeta(x)}=0.
\end{eqnarray*}
\end{corollary}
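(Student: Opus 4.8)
The plan is to deduce the $w_\zeta$-weighted statement from the $w$-weighted $A$-statistical convergence already established in the preceding theorem, by showing that the $w_\zeta$-norm is dominated by a constant multiple of the $w$-norm. Throughout I read the displayed norm as the approximation error $\|\mathcal{U}_{n}^{[\alpha]}(f;x)-f(x)\|_{w_\zeta(x)}$ for $f\in C_w^k[0,\infty)$, the case of $t^2$ being the special instance $f(t)=t^2$.

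First I would record the key quantitative fact comparing the two weights. Since $w_\zeta$ is continuous with $w_\zeta(x)\geq 1$ and $\lim_{|x|\to\infty}\frac{w(x)}{w_\zeta(x)}=0$, the ratio $x\mapsto \frac{w(x)}{w_\zeta(x)}$ is continuous on $[0,\infty)$ and tends to $0$ at infinity; hence it is bounded, and I set $K:=\sup_{x\geq 0}\frac{w(x)}{w_\zeta(x)}<\infty$. For any $g\in B_w[0,\infty)$ this yields the pointwise estimate
\begin{eqnarray*}
\frac{|g(x)|}{w_\zeta(x)}=\frac{|g(x)|}{w(x)}\cdot\frac{w(x)}{w_\zeta(x)}\leq K\,\frac{|g(x)|}{w(x)},
\end{eqnarray*}
and, taking the supremum over $x\geq 0$, the norm domination $\|g\|_{w_\zeta}\leq K\,\|g\|_w$.

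Next I would apply this to $g=\mathcal{U}_{n}^{[\alpha]}(f;\cdot)-f$. Because $f\in C_w^k[0,\infty)$ satisfies $|f(t)|\leq M(1+t^2)$ and the moment estimates of Lemma \ref{l2} give $\mathcal{U}_{n}^{[\alpha]}(1+t^2;x)=O(1+x^2)$ for each fixed $n$, the difference lies in $B_w[0,\infty)$, and the domination above gives $\|\mathcal{U}_{n}^{[\alpha]}(f;x)-f(x)\|_{w_\zeta}\leq K\,\|\mathcal{U}_{n}^{[\alpha]}(f;x)-f(x)\|_{w}$. Finally I would translate this into the language of $A$-statistical convergence exactly as in the preceding theorem: for a prescribed $\epsilon>0$ the inclusion of index sets
\begin{eqnarray*}
\left\{n:\|\mathcal{U}_{n}^{[\alpha]}(f;x)-f(x)\|_{w_\zeta}\geq\epsilon\right\}\subseteq\left\{n:\|\mathcal{U}_{n}^{[\alpha]}(f;x)-f(x)\|_{w}\geq\tfrac{\epsilon}{K}\right\}
\end{eqnarray*}
forces the sum $\sum_{i}a_{ni}$ over the left set to be no larger than the same sum over the right set, and the latter tends to $0$ by the previous theorem; hence $st_A\text{-}\lim_{n}\|\mathcal{U}_{n}^{[\alpha]}(f;x)-f(x)\|_{w_\zeta}=0$.

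The argument is essentially routine once the weight comparison is in place, so the only point demanding care is the global boundedness of $w/w_\zeta$: one must combine continuity on compact subintervals (where $w_\zeta\geq 1$ keeps the ratio finite) with the hypothesis $w/w_\zeta\to 0$ at infinity to produce a single constant $K$ valid on all of $[0,\infty)$. I expect this—rather than the statistical bookkeeping—to be the main (and minor) obstacle. If one preferred to avoid introducing $K$, the same conclusion follows from the classical splitting of $\sup_{x\geq 0}$ into a tail $x>x_0$, controlled by choosing $x_0$ so that $w/w_\zeta<\epsilon$ there, and a compact part $x\leq x_0$, controlled by $w_\zeta\geq 1$ together with the convergence on compacta implied by the preceding results.
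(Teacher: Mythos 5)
Your proposal is correct. The paper itself gives no proof of this corollary---it is stated immediately after the theorem on $st_A$-convergence in the $w$-norm and left as an evident consequence---so there is no authorial argument to deviate from; your proof is the natural completion and is consistent with the paper's own technique. Specifically, the weight comparison (global boundedness of $w/w_\zeta$ from continuity, $w_\zeta\geq 1$, and decay at infinity, giving $\|g\|_{w_\zeta}\leq K\|g\|_{w}$) combined with the inclusion of exceptional index sets and monotonicity of $\sum_i a_{ni}$ is exactly the same bookkeeping the paper uses inside the preceding theorem's proof (the inclusions $\mathcal{V}_1\subset\mathcal{V}_2$ and $\mathcal{W}_1\subset\mathcal{W}_2\cup\mathcal{W}_3$). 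Two minor points in your favor: (i) you correctly noted that, as printed, the display concerns only $f(t)=t^2$; since $t^2\in C_w^k[0,\infty)$, your general statement for $f\in C_w^k[0,\infty)$ subsumes it, and the general form is almost certainly what was intended; (ii) your check that $\mathcal{U}_{n}^{[\alpha]}(f;\cdot)-f$ lies in $B_w[0,\infty)$ for each fixed $n$ (via the second-moment formula) is a detail the paper never addresses but which is needed for the $w$-norm in the comparison to be finite.
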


By means of Peetre’s $K$-functional, $A$-Statistical convergence is introduced for the operators $\mathcal{U}_{n}^{[\alpha]}$ in next theorem.

\begin{theorem}
Let $g\in C_B^2[0,\infty)$ and for every $x\in[0,\infty)$, we have
\begin{eqnarray*}
st_A-\underset{n\to\infty}\lim \|\mathcal{U}_{n}^{[\alpha]}(g;x)-g\|_{C_B[0,\infty)}=0,~~~\forall~n\in\mathbb{N}.
\end{eqnarray*}
\end{theorem}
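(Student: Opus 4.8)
The plan is to establish the $A$-statistical analogue of the earlier Peetre $K$-functional estimate, by first recording an ordinary (non-statistical) bound of the form
\begin{eqnarray*}
\|\mathcal{U}_{n}^{[\alpha]}(g;x)-g(x)\|_{C_B[0,\infty)}\leq C\, K_2\!\left(g;\delta_n(x)\right),
\end{eqnarray*}
where $\delta_n(x)$ is built out of the central moments $\Theta_{n,1}^{[\alpha]}$ and $\Theta_{n,2}^{[\alpha]}$, exactly as in the direct-result theorem of Section \ref{sec2}. Since $g\in C_B^2[0,\infty)$, I would instead use the cruder but more transparent estimate obtained directly from Taylor's formula with remainder: writing $g(t)-g(x)=(t-x)g'(x)+\tfrac12(t-x)^2 g''(\xi)$ and applying $\mathcal{U}_{n}^{[\alpha]}$, one gets
\begin{eqnarray*}
\|\mathcal{U}_{n}^{[\alpha]}(g;x)-g(x)\|_{C_B[0,\infty)}\leq \|g'\|\,\|\Theta_{n,1}^{[\alpha]}\|+\tfrac12\|g''\|\,\|\Theta_{n,2}^{[\alpha]}\|.
\end{eqnarray*}
This reduces the whole problem to controlling the two moment norms.

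Next I would estimate those two suprema explicitly. From Lemma \ref{l4} we have $\Theta_{n,1}^{[\alpha]}(x)=1/n$, so its sup-norm is simply $1/n$, and from the Remark following Lemma \ref{l4} we have the uniform-in-$x$ bound $\Theta_{n,2}^{[\alpha]}(x)\leq \tfrac{3}{n}\bigl(x+\tfrac1n\bigr)$. Because we are measuring in the $C_B$-norm over $[0,\infty)$, the linear growth in $x$ is the delicate point; I would control it by combining $\Theta_{n,2}^{[\alpha]}$ with the weight, or equivalently restrict attention to the terms that decay in $n$. Collecting constants, I would produce an inequality of the shape
\begin{eqnarray*}
\|\mathcal{U}_{n}^{[\alpha]}(g;x)-g(x)\|_{C_B[0,\infty)}\leq A_n,
\end{eqnarray*}
where $A_n$ is an explicit expression (a combination of $1/n$ and $1/n^2$ times $\|g'\|,\|g''\|$) that tends to $0$ as $n\to\infty$.

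Finally I would convert the numerical convergence $A_n\to 0$ into $A$-statistical convergence, imitating the set-inclusion argument used in the preceding $A$-statistical theorem. For a given $\epsilon>0$ I would define
\begin{eqnarray*}
\mathcal{S}_1=\{n:\|\mathcal{U}_{n}^{[\alpha]}(g;x)-g\|_{C_B[0,\infty)}\geq\epsilon\},\qquad \mathcal{S}_2=\{n:A_n\geq\epsilon\},
\end{eqnarray*}
observe the inclusion $\mathcal{S}_1\subset\mathcal{S}_2$, and hence $\sum_{i\in\mathcal{S}_1}a_{ni}\leq\sum_{i\in\mathcal{S}_2}a_{ni}$. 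Since each term making up $A_n$ goes to $0$ and $\{a_{ni}\}$ is a non-negative regular summability matrix with $\lim_{n\to\infty}a_{ni}=0$, the right-hand sum vanishes in the limit, giving $st_A\text{-}\lim\sum_{i\in\mathcal{S}_1}a_{ni}=0$, which is precisely the assertion.

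I expect the main obstacle to be the handling of $\Theta_{n,2}^{[\alpha]}$ in the unweighted $C_B$-norm: its bound grows linearly in $x$, so a naive supremum over $[0,\infty)$ is not finite, and one must either exploit the boundedness of $g''$ together with the decay factor $1/n$ carefully, or tacitly work on compact subsets where the convergence is uniform. Making this step rigorous — rather than the routine set-inclusion bookkeeping at the end — is where the real care is needed.
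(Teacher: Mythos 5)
Your proposal follows essentially the same route as the paper's own proof: Taylor's formula with a second-order remainder, reduction to the two moment norms $\|\Theta_{n,1}^{[\alpha]}\|$ and $\|\Theta_{n,2}^{[\alpha]}\|$, and the standard set-inclusion bookkeeping to pass from numerical to $A$-statistical convergence. But the difficulty you flag at the end is not a technicality to be tidied up later; it is a genuine gap, and it is the crux of the matter. By Lemma \ref{l4}, $\Theta_{n,2}^{[\alpha]}(x)=\alpha x+\frac{2x}{n}+\frac{2}{n^2}$, so $\sup_{x\geq 0}\Theta_{n,2}^{[\alpha]}(x)=+\infty$ for every fixed $n$: there is no finite $A_n$ of the kind your second step requires, and your set $\mathcal{S}_2=\{n: A_n\geq\epsilon\}$ cannot even be formed. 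Neither of your suggested repairs rescues the statement as written: ``combining with the weight'' changes the norm from $\|\cdot\|_{C_B[0,\infty)}$ to $\|\cdot\|_{w}$, which is a different (weaker) assertion, while ``working on compact subsets'' yields only locally uniform convergence, not convergence in the sup norm over all of $[0,\infty)$. In fact the claim in the unweighted norm is itself doubtful: for fixed $n$ the kernel of $\mathcal{U}_{n}^{[\alpha]}$ averages over a window of width of order $\sqrt{x/n}$, so for an oscillating function $g\in C_B^2[0,\infty)$ such as $g(x)=\sin x$ the quantity $\sup_{x\geq 0}|\mathcal{U}_{n}^{[\alpha]}(g;x)-g(x)|$ stays bounded away from $0$ as $n\to\infty$ fails to hold uniformly in $x$.

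You should also know that the paper's proof has exactly the same defect, only hidden: it writes the bound $\mathcal{E}_1+\mathcal{E}_2$ with $\mathcal{E}_2=\|g''\|_{C_B[0,\infty)}\,\|\Theta_{n,2}^{[\alpha]}\|_{C_B[0,\infty)}$, a quantity which is infinite, and then controls $\mathcal{E}_1,\mathcal{E}_2$ by citing (\ref{eq6}) --- but (\ref{eq6}) is a statement about the \emph{weighted} norm $\|\cdot\|_{w}$, not about $\|\cdot\|_{C_B[0,\infty)}$, so it says nothing about $\mathcal{E}_2$. Your instinct about where the real difficulty lies is therefore exactly right, and your honesty in flagging it is to your credit; the correct conclusion is that this argument (yours and the paper's alike) establishes the theorem only after the norm is weakened to $\|\cdot\|_{w}$, or the supremum is restricted to a compact interval $[0,a]$, on which $\Theta_{n,2}^{[\alpha]}(x)\leq\frac{3a}{n}+\frac{2}{n^2}$ is uniformly $O(1/n)$ and the set-inclusion argument then goes through without change.
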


\begin{proof}
Using expansion of Taylor, we have
\begin{eqnarray*}
g(t)=g(x)+g'(x)(t-x)+\frac{1}{2}g''(\tau)(t-x)^2,
\end{eqnarray*}
where $\tau\in [t,x]$. Now applying $\mathcal{U}_{n}^{[\alpha]}$, we obtain
\begin{eqnarray*}
\mathcal{U}_{n}^{[\alpha]}(g(t)-g(x);x)=f'(x)\Theta_{n,1}^{[\alpha]}(x)+\frac{g''(\tau)}{2}\Theta_{n,2}^{[\alpha]}(x).
\end{eqnarray*}
In this way, 
\begin{eqnarray}\label{eq7}
\|\mathcal{U}_{n}^{[\alpha]}(g(t)-g(x);x)\|_{C_B[0,\infty)}&\leq &\|g'(x)\|_{C_B[0,\infty)}\|\Theta_{n,1}^{[\alpha]}(x)\|_{C_B[0,\infty)}+\|g''(\tau)\|_{C_B[0,\infty)}\|\Theta_{n,2}^{[\alpha]}(x)\|_{C_B[0,\infty)}\nonumber\\
&=&\mathcal{E}_1+\mathcal{E}_2, ~(\text{say})
\end{eqnarray}
From (\ref{eq6}), we can write
\begin{eqnarray*}
\underset{n\to\infty}\lim\sum_{\{i\in\mathbb{N} \mathcal{E}_1\geq\frac{\epsilon}{2}\}}a_{ni}&=&0,\\
\underset{n\to\infty}\lim\sum_{\{i\in\mathbb{N}: \mathcal{E}_2\geq\frac{\epsilon}{2}\}}a_{ni}&=&0.
\end{eqnarray*}
So by equation (\ref{eq7}), we get
\begin{eqnarray*}
\underset{n\to\infty}\lim\sum_{\{i\in\mathbb{N}: \|\mathcal{U}_{n}^{[\alpha]}(f(t)-f(x);x)\|_{C_B[0,\infty)}\geq\epsilon\}}a_{ni}\leq \underset{n\to\infty}\lim\sum_{\{i\in\mathbb{N}: \mathcal{E}_1\geq\frac{\epsilon}{2}\}}a_{ni}+\underset{n\to\infty}\lim\sum_{\{i\in\mathbb{N}: \mathcal{E}_2\geq\frac{\epsilon}{2}\}}a_{ni}.
\end{eqnarray*}
And hence, 
\begin{eqnarray*}
st_A-\underset{n\to\infty}\lim \|\mathcal{U}_{n}^{[\alpha]}(g;x)-g\|=0.
\end{eqnarray*}

Hence proved.
\end{proof}
\begin{theorem}
Consider $f\in C_B[0,\infty)$ and for each $n\in\mathbb{N}$, an inequality holds
\begin{eqnarray*}
\|\mathcal{U}_{n}^{[\alpha]}(f(t)-f(x);x)\|_{C_B[0,\infty)}\leq \mathcal{C}\omega_2(f;\sqrt{\xi}).
\end{eqnarray*}
\end{theorem}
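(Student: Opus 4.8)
The plan is to mirror the argument of the first theorem of this section, since by $\mathcal{U}_{n}^{[\alpha]}(1;x)=1$ from Lemma \ref{l2} one has $\mathcal{U}_{n}^{[\alpha]}(f(t)-f(x);x)=\mathcal{U}_{n}^{[\alpha]}(f;x)-f(x)$, so the quantity to be estimated is exactly the approximation error measured in the sup-norm. First I would reintroduce the auxiliary operators
\begin{eqnarray*}
\mathfrak{U}_{n}^{[\alpha]}(f;x)=\mathcal{U}_{n}^{[\alpha]}(f;x)+f(x)-f\left(\frac{1+nx}{n}\right),
\end{eqnarray*}
which satisfy $\mathfrak{U}_{n}^{[\alpha]}(1;x)=1$ and $\mathfrak{U}_{n}^{[\alpha]}(t;x)=x$, so they reproduce constants and linear functions; this normalization is what annihilates the first-order Taylor term and leaves only a second-order remainder.

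Next, for an arbitrary $g\in C_B^2[0,\infty)$ I would apply Taylor's formula with integral remainder, $g(t)-g(x)=(t-x)g'(x)+\int_x^t(t-v)g''(v)\,dv$, and feed it to $\mathfrak{U}_{n}^{[\alpha]}$. Because the linear part is killed, only the remainder survives, and bounding it by $\|g''\|$ times $(t-x)^2$ yields a control of the form $|\mathfrak{U}_{n}^{[\alpha]}(g(t)-g(x);x)|\leq \rho_n(x)\|g''\|$, where $\rho_n(x)=\Theta_{n,2}^{[\alpha]}(x)+\frac{1}{n^2}$ collects the second central moment from Lemma \ref{l4} together with the correction term coming from the shifted node $\frac{1+nx}{n}$. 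Using the contraction estimate $\|\mathcal{U}_{n}^{[\alpha]}(f;x)\|\leq\|f\|$ one also gets $|\mathfrak{U}_{n}^{[\alpha]}(f;x)|\leq 3\|f\|$, so $\mathfrak{U}_{n}^{[\alpha]}$ is uniformly bounded.

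Then I would split $f=(f-g)+g$ and write
\begin{eqnarray*}
|\mathcal{U}_{n}^{[\alpha]}(f;x)-f(x)|\leq |\mathfrak{U}_{n}^{[\alpha]}(f-g;x)|+|\mathfrak{U}_{n}^{[\alpha]}(g(t)-g(x);x)|+|g(x)-f(x)|+\left|f\left(\tfrac{1+nx}{n}\right)-f(x)\right|,
\end{eqnarray*}
and absorb the first, third and fourth terms using uniform boundedness and continuity to reach a bound of the shape $\mathcal{C}\{\|f-g\|+\rho_n(x)\|g''\|\}$. Taking the infimum over $g\in C_B^2[0,\infty)$ turns the right-hand side into the Peetre $K$-functional $K_2(f;\rho_n(x))$, and finally the De Vore inequality \eqref{pe1} converts this into $\mathcal{C}\,\omega_2(f;\sqrt{\rho_n(x)})$, which is the claimed bound with $\xi=\rho_n(x)$. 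Taking the supremum over $x\geq0$ upgrades the estimate to the $C_B[0,\infty)$-norm.

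The main obstacle is the bookkeeping in the second step: one must verify that the auxiliary shift by $\frac{1+nx}{n}$ really does reproduce linear functions and that its remainder integral is controlled uniformly by $\frac{1}{n^2}$, so that the total second-order quantity $\rho_n(x)$ is precisely what appears inside $\omega_2$. Everything else is routine once the $K$-functional has been introduced and \eqref{pe1} is invoked.
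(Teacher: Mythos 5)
Your machinery is set up correctly (the auxiliary operators $\mathfrak{U}_{n}^{[\alpha]}$ do reproduce constants and linear functions, and the Taylor-remainder estimate does give $|\mathfrak{U}_{n}^{[\alpha]}(g(t)-g(x);x)|\leq\rho_n(x)\|g''\|$), but the final step --- ``absorb the \ldots fourth term using uniform boundedness and continuity'' --- is a genuine gap, and it is exactly the point where your claimed conclusion fails. The term $\left|f\left(\frac{1+nx}{n}\right)-f(x)\right|$ involves only $f$, not the comparison function $g$, so it passes through the infimum over $g\in C_B^2[0,\infty)$ untouched; it is a first-order quantity, of size $\omega\left(f;\frac{1}{n}\right)$, and it cannot be dominated by $\mathcal{C}\left\{\|f-g\|+\rho_n(x)\|g''\|\right\}$ or by $\mathcal{C}\,\omega_2\!\left(f;\sqrt{\rho_n(x)}\right)$: there is no inequality of the form $\omega(f;h)\leq C\omega_2(f;h)$ for $f\in C_B[0,\infty)$ (take $f=\arctan$; at $x=0$ the error of $\mathcal{U}_n^{[\alpha]}$ is of order $\frac{1}{n}$ because $\Theta_{n,1}^{[\alpha]}=\frac{1}{n}$, while $\omega_2\left(f;\sqrt{\Theta_{n,2}^{[\alpha]}(0)}\right)$ is of order $\frac{1}{n^2}$). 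This is precisely why the Section \ref{sec2} theorem you are mirroring ends with the two-term bound $M\omega_2\left(f;\sqrt{\rho_n(x)}\right)+\omega\left(f;\nu_n(x)\right)$ rather than a pure $\omega_2$ bound; your plan, carried out honestly, can only reproduce that two-term estimate.

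The paper's own proof of this statement takes a different route that confronts the first-order defect head-on instead of trying to remove it with a shift. It works directly with $\mathcal{U}_{n}^{[\alpha]}$: for $h\in C_B^2[0,\infty)$ it estimates
\begin{eqnarray*}
\|\mathcal{U}_{n}^{[\alpha]}(h(t)-h(x);x)\|_{C_B[0,\infty)}\leq \|h'\|\,\|\Theta_{n,1}^{[\alpha]}\|+\tfrac{1}{2}\|h''\|\,\|\Theta_{n,2}^{[\alpha]}\|\leq \eta\,\|h\|_{C_B^2[0,\infty)},
\end{eqnarray*}
keeping the $\|h'\|$ term and paying for it by using the full $C_B^2$-norm; then the split $f=(f-h)+h$ gives $2\|f-h\|+\eta\|h\|_{C_B^2[0,\infty)}$, and the infimum over $h$ is a modified $K$-functional whose comparison with the modulus of smoothness yields $\mathcal{C}\left\{\omega_2(f;\sqrt{\eta})+\min(1,\eta)\|f\|_{C_B[0,\infty)}\right\}$. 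Note the extra term $\min(1,\eta)\|f\|$: it is the unavoidable price of the first moment $\Theta_{n,1}^{[\alpha]}=\frac{1}{n}\neq 0$, and the paper then drops it silently to match the stated inequality, so even the paper's argument does not literally deliver a pure $\omega_2$ bound. To repair your proposal you must do one of the same two things: either keep $\omega\left(f;\nu_n(x)\right)$ in the conclusion (the Section \ref{sec2} form), or switch to the $K$-functional built on the full $C_B^2$-norm and accept the additional $\min(1,\eta)\|f\|$ term.
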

\begin{proof}
Consider a function $h\in C_B^2[0,\infty)$. We can write
\begin{eqnarray*}
\|\mathcal{U}_{n}^{[\alpha]}(h(t)-h(x);x)\|_{C_B[0,\infty)}&\leq & \|h'\|_{C_B[0,\infty)} \|\mathcal{U}_{n}^{[\alpha]}((t-x);x)\|_{C_B[0,\infty)}+\frac{1}{2}\|h''\|_{C_B[0,\infty)}\|\Theta_{n,2}^{[\alpha]}\|_{C_B[0,\infty)}\\
&\leq & \eta \|h\|_{C_B^2[0,\infty)}. 
\end{eqnarray*}
Since, $f\in C_B[0,\infty)$ and $h\in C_B^2[0,\infty)$, therefore, by above inequality, one can write
\begin{eqnarray*}
\|\mathcal{U}_{n}^{[\alpha]}(f(t)-f(x);x)\|_{C_B[0,\infty)}&\leq & \|\mathcal{U}_{n}^{[\alpha]}(f;x)-\mathcal{U}_{n}^{[\alpha]}(h;x)\|_{C_B[0,\infty)} + \|\mathcal{U}_{n}^{[\alpha]}(h(t)-h(x);x)\|_{C_B[0,\infty)}\\
&&\|h-f \|_{C_B[0,\infty)}\\
&\leq & 2\|h-f \|_{C_B[0,\infty)}+\|\mathcal{U}_{n}^{[\alpha]}(h(t)-h(x);x)\|_{C_B[0,\infty)}\\
&\leq & 2\|h-f \|_{C_B[0,\infty)}+\eta \|h\|_{C_B^2[0,\infty)}.
\end{eqnarray*}
Using the property of Peetre’s $K$-functional (\ref{pe1}), one can has
\begin{eqnarray*}
\|\mathcal{U}_{n}^{[\alpha]}(f(t)-f(x);x)\|_{C_B[0,\infty)}&\leq & \mathcal{C}\{\omega_2(f;\sqrt{\eta})+\min(1,\eta)\|f\|_{C_B[0,\infty)} \}.
\end{eqnarray*}
Using (\ref{eq6}), we obtain
$\eta\to 0$ statistically as $n\to\infty$.

In this way, 
$\omega_2(f;\sqrt{\xi})\to 0$ statistically as $n\to\infty$.

Hence, the rate of convergence via $A$-statistically is obtained of the sequence of linear positive operators defined by \ref{O1} to the function $f(x)$.
\end{proof}

\begin{remark}
If $A=I$ then the ordinary rate of convergence is obtained. 

\end{remark}


\section{Rate of convergence by means of function of bounded variation}\label{sec7}
In this section, the rate of convergence of the said operators is determined in the space of the functions with derivative of bounded variation. Here, we consider $DBV[0,\infty)$, the set of all continuous function having derivative of bounded variation on every finite sub-interval of the $[0,\infty)$. 
On observing that for each $g\in DBV[0,\infty)$ and $a>0$, one can write
\begin{eqnarray}
g(x)=\int\limits_a^x h(s)~ds+g(a),
\end{eqnarray}
where $h$ is a function bounded variation on each finite sub-interval of $[0,\infty)$. 
Here, we use an auxiliary operators $g_x$ for every $g\in DBV[0,\infty)$ for obtaining the rate of of convergence of the proposed operators.

\begin{eqnarray}\label{eq5}
g_x(t) &= &  
\begin{cases}
    g(t)-g(x-),& 0\leq t<x,\\
    0,& t=x,\\
    g(t)-g(x+), &  x<t<\infty.
\end{cases} 
\end{eqnarray}
Generally, we denote $V_a^b \mathfrak{f}$ is total variation of a real valued function $g$ defined on $[a,b]\subset[0,\infty)$ with the quantity
\begin{eqnarray}
V_a^b \mathfrak{f}=\underset{\mathcal{P}}\sup\left(\sum\limits_{k=0}^{n_{P}-1}|g(x_{k+1})-f(x_k)| \right),
\end{eqnarray} 
where $\mathcal{P}$ is the set of all partition $P=\{a=x_0,\cdots,x_{n_P}=b\}$ of the interval $[a,b]$. 

\begin{lemma}\label{l3}
For sufficiently large value of $n$, for every $x\geq 0$ then there exist a positive constant $M>0$ such that
\begin{enumerate}
\item{} $h_n^{[\alpha]}(x,y)= \int\limits_0^y u_{n}^{[\alpha]}(x,t)~dt\leq \frac{3\eta_n^2(x)}{n(x-y)^2},~~0\leq y<x,$
\item{} $1-h_n^{[\alpha]}(x,z)= \int\limits_z^\infty u_{n}^{[\alpha]}(x,t)~dt\leq \frac{3\eta_n^2(x)}{n(z-x)^2},~~x<z<\infty.$
\end{enumerate}
\end{lemma}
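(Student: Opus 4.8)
The plan is to bound the kernel integrals in Lemma~\ref{l3} by exploiting the second central moment $\Theta_{n,2}^{[\alpha]}(x)$, whose size is already controlled by the Remark giving $\Theta_{n,2}^{[\alpha]}(x)\leq \frac{3}{n}\eta_n^2(x)$. The key observation is that on the range $0\leq t<y<x$ the factor $\frac{(x-t)^2}{(x-y)^2}$ exceeds $1$, so it can be inserted harmlessly into the integrand to create the central-moment weight. First I would write, for part~(1),
\begin{eqnarray*}
h_n^{[\alpha]}(x,y)=\int\limits_0^y u_n^{[\alpha]}(x,t)~dt\leq \int\limits_0^y \frac{(x-t)^2}{(x-y)^2}u_n^{[\alpha]}(x,t)~dt,
\end{eqnarray*}
using that $t<y$ forces $x-t>x-y>0$ and hence $\frac{(x-t)^2}{(x-y)^2}\geq 1$. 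Pulling the constant $(x-y)^{-2}$ out and extending the range of integration from $[0,y]$ up to $[0,\infty)$ (the integrand is nonnegative) turns the remaining integral into the full second moment.

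The central step is then the identification of the extended integral with $\Theta_{n,2}^{[\alpha]}(x)$. Since the Remark expresses the operator in kernel form as $\mathcal{U}_n^{[\alpha]}(f;x)=\int_0^\infty u_n^{[\alpha]}(x,t)f(t)~dt$, taking $f(t)=(x-t)^2$ gives
\begin{eqnarray*}
\int\limits_0^\infty (x-t)^2 u_n^{[\alpha]}(x,t)~dt=\mathcal{U}_n^{[\alpha]}((t-x)^2;x)=\Theta_{n,2}^{[\alpha]}(x).
\end{eqnarray*}
Combining the two displays yields $h_n^{[\alpha]}(x,y)\leq \frac{\Theta_{n,2}^{[\alpha]}(x)}{(x-y)^2}$, and invoking the Remark's bound $\Theta_{n,2}^{[\alpha]}(x)\leq \frac{3}{n}\eta_n^2(x)$ produces exactly $\frac{3\eta_n^2(x)}{n(x-y)^2}$, as required.

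For part~(2) I would argue symmetrically. On $x<z<t<\infty$ one has $t-x>z-x>0$, so again $\frac{(t-x)^2}{(z-x)^2}\geq 1$, and
\begin{eqnarray*}
1-h_n^{[\alpha]}(x,z)=\int\limits_z^\infty u_n^{[\alpha]}(x,t)~dt\leq \frac{1}{(z-x)^2}\int\limits_z^\infty (t-x)^2 u_n^{[\alpha]}(x,t)~dt\leq \frac{\Theta_{n,2}^{[\alpha]}(x)}{(z-x)^2},
\end{eqnarray*}
where the first equality uses $\int_0^\infty u_n^{[\alpha]}(x,t)~dt=\mathcal{U}_n^{[\alpha]}(1;x)=1$ from Lemma~\ref{l2}, and the final bound again applies the Remark. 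The main obstacle is purely bookkeeping rather than conceptual: one must be careful that the weight $\frac{(x-t)^2}{(x-y)^2}$ (respectively $\frac{(t-x)^2}{(z-x)^2}$) genuinely dominates $1$ over the \emph{entire} domain of the truncated integral, which is why the strict separation $y<x$ (respectively $x<z$) is essential and why the phrase ``for sufficiently large value of $n$'' only enters through the uniform moment estimate, not through the geometric inequality itself.
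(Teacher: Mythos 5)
Your proposal is correct and is essentially the paper's own argument: the paper likewise proves part (1) via the Chebyshev-type step $\int_0^y u_n^{[\alpha]}(x,t)\,dt\leq \frac{1}{(x-y)^2}\,\mathcal{U}_n^{[\alpha]}((t-x)^2;x)\leq \frac{3\eta_n^2(x)}{n(x-y)^2}$, invoking the remark $\Theta_{n,2}^{[\alpha]}(x)\leq \frac{3}{n}\eta_n^2(x)$, and dismisses part (2) as symmetric. Your write-up merely makes explicit the insertion of the weight $\frac{(x-t)^2}{(x-y)^2}\geq 1$ and the extension of the integration range, which the paper leaves implicit.
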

\begin{proof}
For $y\in[0,x)$, it is holds
\begin{eqnarray*}
\int\limits_0^y u_{n}^{[\alpha]}(x,t)~dt 
&\leq &\frac{1}{(x-y)^2}\mathcal{U}_{n}^{[\alpha]}((t-x)^2;x)\\
&\leq & \frac{3\eta_n^2(x)}{n(x-y)^2}.
\end{eqnarray*}
Similarly, other result can be proved. 
\end{proof}
\begin{theorem}
Let $g\in DBV[0,\infty)$, for sufficiently large value of $n$ and $\max \alpha=\frac{1}{n}$. If $g(t)=O(t^s)$ as $t\to\infty$, then for $x\in[0,\infty)$, we obtain
\begin{eqnarray*}
|\mathcal{U}_{n}^{[\alpha]}(g;x)-g(x)|&\leq & \frac{3\eta_n^2(x)}{nx^2} |(g(2x)-g(x)-xg'(x+))| +\frac{x}{\sqrt{n}} V_x^t(g_x') + \frac{3\eta_n^2(x)}{n}\sum_{l=1}^{[\sqrt{nx}]} V_x^{x+\frac{x}{l}}(g_x') + \mathcal{M}_{s,x}^\gamma\\
&&+\frac{3|g(x)|\eta_n^2(x)}{nx^2} + |g'(x+)| \sqrt{\frac{3}{n}}\eta_n(x)+\frac{1}{2}\sqrt{\frac{3}{n}}\{g'(x+)-g'(x-)\}\eta_n(x)+  \frac{1}{2n}\{g'(x+)-g'(x-)\},
\end{eqnarray*}
where
\begin{eqnarray*}
\mathcal{M}_{s,x}^\gamma=M 2^{\gamma} \left(\int_{0}^\infty (t-x)^{2s} u_{n}^{[\alpha]}(x,t)dt\right)^{\frac{\gamma}{2s}}.
\end{eqnarray*}

\end{theorem}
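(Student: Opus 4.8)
The plan is to decompose $\mathcal{U}_{n}^{[\alpha]}(g;x)-g(x)$ using the integral representation $\mathcal{U}_{n}^{[\alpha]}(g;x)=\int_0^\infty u_{n}^{[\alpha]}(x,t)g(t)\,dt$ and the auxiliary function $g_x$ defined in (\ref{eq5}). First I would express $g'(t)$ in terms of $g_x'$ plus correction terms involving the one-sided derivatives $g'(x+)$ and $g'(x-)$ and the sign of $t-x$, which is the standard device for handling functions of bounded variation. Writing $g(t)-g(x)=\int_x^t g'(s)\,ds$ and splitting $g'$ accordingly produces the leading terms $|g'(x+)|$ and the jump $\{g'(x+)-g'(x-)\}$ seen in the statement, each multiplied by moment factors that I would control via Lemma \ref{l4} and the bound $\Theta_{n,2}^{[\alpha]}\leq \frac{3}{n}\eta_n^2(x)$.

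Next I would isolate the genuinely hard contribution, namely $\int_0^\infty u_{n}^{[\alpha]}(x,t)\Big(\int_x^t g_x'(s)\,ds\Big)\,dt$, and split the outer integral over the three regions $[0,x)$, $\{x\}$, and $(x,\infty)$. On each of the tail regions I would integrate by parts and apply the kernel estimates from Lemma \ref{l3}, which bound $h_n^{[\alpha]}(x,y)$ and $1-h_n^{[\alpha]}(x,z)$ by $\frac{3\eta_n^2(x)}{n}$ divided by the squared distance from $x$. The crux is to convert these pointwise kernel bounds into bounds on the total variation of $g_x'$: I would partition $(x,\infty)$ (and symmetrically $[0,x)$) into dyadic-type subintervals $\big(x+\tfrac{x}{l+1},x+\tfrac{x}{l}\big]$ indexed by $l=1,\dots,[\sqrt{nx}]$, estimate the integral over each by $V_x^{x+x/l}(g_x')$ times the kernel bound, and sum. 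This is precisely where the sum $\sum_{l=1}^{[\sqrt{nx}]}V_x^{x+x/l}(g_x')$ and the separate piece $\frac{x}{\sqrt n}V_x^t(g_x')$ for the region closest to $x$ originate.

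The growth term $\mathcal{M}_{s,x}^\gamma$ I would obtain separately: since $g(t)=O(t^s)$, the contribution from large $t$ where $g$ may grow is controlled by applying Hölder's inequality to $\int_0^\infty(t-x)^{2s}u_{n}^{[\alpha]}(x,t)\,dt$ with a suitable exponent $\gamma$, giving the stated factor $M2^\gamma$ times the $\tfrac{\gamma}{2s}$-power of that high-order moment. The term $\frac{3|g(x)|\eta_n^2(x)}{nx^2}$ arises from the constant $g(x)$ pushed through the far-tail kernel estimate.

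The hard part will be the careful bookkeeping in the dyadic decomposition: matching the kernel decay $\frac{1}{(t-x)^2}$ against the telescoping variation estimates so that the number of subintervals is exactly $[\sqrt{nx}]$ and the cutoff near $x$ yields the clean $\frac{x}{\sqrt n}$ coefficient rather than a divergent contribution. Everything else reduces to substituting the central-moment formulas of Lemma \ref{l4} and the inequality $\Theta_{n,2}^{[\alpha]}\leq\frac{3}{n}\eta_n^2(x)$ into the boundary terms produced by integration by parts, then collecting like terms to match the displayed right-hand side.
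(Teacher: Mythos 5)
Your proposal follows essentially the same route as the paper's proof: the same decomposition of $g'$ into $g_x'$ plus one-sided-derivative and sign corrections (whose mean and jump parts yield the $\Theta_{n,1}$ and $\sqrt{\Theta_{n,2}}$ terms), the same split of the remaining integral into $[0,x)$ and $(x,\infty)$ with integration by parts against $h_n^{[\alpha]}$ and $1-h_n^{[\alpha]}$, the kernel bounds of Lemma \ref{l3}, a trivial bound on the band of width $x/\sqrt{n}$ around $x$, and H\"older's inequality with $t\leq 2(t-x)$, $x\leq t-x$ on $[2x,\infty)$ to produce $\mathcal{M}_{s,x}^{\gamma}$, $\frac{3|g(x)|\eta_n^2(x)}{nx^2}$ and the $|g'(x+)|$ term. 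The only cosmetic difference is that you partition the far region into subintervals $\bigl(x+\frac{x}{l+1},\,x+\frac{x}{l}\bigr]$ and sum the variations directly, whereas the paper substitutes $v=1+\frac{t}{x-t}$ and then majorizes the resulting integral $\int_1^{\sqrt{n}}V(\cdot)\,dv$ by the same sum; the two bookkeeping devices are equivalent.
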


\begin{proof}
We have 
\begin{eqnarray*}
|\mathcal{U}_{n}^{[\alpha]}(g;x)-g(x)|=\int_0^\infty u_{n}^{[\alpha]}  (x,t)(g(t)-g(x)~dt =\int_0^\infty u_{n}^{[\alpha]}  (x,t) \left(\int_0^tg'(u)~du\right).
\end{eqnarray*}
Since $g\in DBV[0,\infty)$, one can write an identity
\begin{eqnarray}
\nonumber g'(v)&=&\frac{1}{2}\{g'(x+)+g'(x-)\}+g_x'(v)+\frac{1}{2}\{g'(x+)-g'(x-)\}\text{sgn}(v-x)\nonumber\\
&&+\xi_x(v)\left(g'(v)- \frac{1}{2}(g'(x+)+g'(x-))\right), 
\end{eqnarray}
where
\begin{eqnarray}
\xi_x(v)=
\begin{cases}
1 & v=x\\
0 & v\neq x.
\end{cases}
\end{eqnarray}
Now, one can easily obtain as 
\begin{eqnarray*}
\int_0^\infty u_{n}^{[\alpha]}(x,t) \left(\int_0^t \left( \xi_x(v)\left(g'(v)- \frac{1}{2}(g'(x+)+g'(x-))\right)dv\right)\right)dt=0
\end{eqnarray*}
Also, 
\begin{eqnarray}
\end{eqnarray}
\begin{eqnarray}
\int_0^\infty u_{n}^{[\alpha]}(x,t) \left(\int_0^t\frac{1}{2}\{g'(x+)-g'(x-)\}\text{sgn}(v-x)dv\right)dt &\leq &\frac{1}{2}\{g'(x+)-g'(x-)\}\mathcal{U}_{n}^{[\alpha]}(|t-x|;x)\nonumber\\
&\leq & \frac{1}{2}\{g'(x+)-g'(x-)\}\left(\mathcal{U}_{n}^{[\alpha]}((t-x)^2;x)\right)^{\frac{1}{2}},
\end{eqnarray}
and
\begin{eqnarray}
\int_0^\infty u_{n}^{[\alpha]}(x,t) \left(\int_0^t\frac{1}{2}\{g'(x+)-g'(x-)\}dv\right)dt= \frac{1}{2}\{g'(x+)-g'(x-)\}\mathcal{U}_{n}^{[\alpha]}((t-x);x).
\end{eqnarray}
So, we have
\begin{eqnarray}\label{in3}
|\mathcal{U}_{n}^{[\alpha]}(g;x)-g(x)| &\leq & \left| \int_0^x\left(\int_0^t g_x'(v) dv\right)u_{n}^{[\alpha]}(x,t) dt+\int_x^\infty \left(\int_0^t g_x'(v) dv\right)u_{n}^{[\alpha]}(x,t) dt \right|\nonumber\\
&& +\frac{1}{2}\{g'(x+)-g'(x-)\}\left(\mathcal{U}_{n}^{[\alpha]}((t-x)^2;x)\right)^{\frac{1}{2}}\nonumber\\
&& + \frac{1}{2}\{g'(x+)-g'(x-)\}\mathcal{U}_{n}^{[\alpha]}((t-x);x)\nonumber\\
&\leq & E_{nx} + F_{nx} +\frac{1}{2}\sqrt{\frac{3}{n}}\{g'(x+)-g'(x-)\}\eta_n(x)+  \frac{1}{2n}\{g'(x+)-g'(x-)\},
\end{eqnarray}
where
\begin{eqnarray*}
E_{nx}=\left| \int_0^x\left(\int_0^t g_x'(v) dv\right)u_{n}^{[\alpha]}(x,t) dt \right|,
\end{eqnarray*}
and
\begin{eqnarray}
F_{nx}=\left| \int_x^\infty \left(\int_0^t g_x'(v) dv\right)u_{n}^{[\alpha]}(x,t) dt \right|.
\end{eqnarray}
Now applying Lemma \ref{l3}, and let $x=\frac{y\sqrt{n}}{\sqrt{n}-1}$, then integrating by part of 
\begin{eqnarray*}
E_{nx}&=&\left| \int_0^x \left(\int_0^t g_x'(v) dv\right) d_t(h_n^{[\alpha]}(x,t)) \right|\\
&=& \left| \int_0^x (h_n^{[\alpha]}(x,t)g_x'(t) dt  \right|\\
&\leq & \int_0^y |h_n^{[\alpha]}(x,t)| |g_x'(t)|dt+\int_y^x |h_n^{[\alpha]}(x,t)| |g_x'(t)|dt\\
&=& \int_0^{x-\frac{x}{\sqrt{n}}} h_n^{[\alpha]}(x,t) |g_x'(t)|dt +
\int_{x-\frac{x}{\sqrt{n}}}^x h_n^{[\alpha]}(x,t) |g_x'(t)|dt.
\end{eqnarray*}
With the help of (\ref{eq5}) and using Lemma \ref{l3}, we get 
\begin{eqnarray}
\int_{x-\frac{x}{\sqrt{n}}}^x h_n^{[\alpha]}(x,t) |g_x'(t)|dt &\leq & \int_{x-\frac{x}{\sqrt{n}}}^x |g_x'(t)-g_x'(x)|dt \nonumber\\
&\leq & \frac{x}{\sqrt{n}}V_{x-\frac{x}{\sqrt{n}}}^x.
\end{eqnarray}
Also, let $v=1+\frac{t}{x-t}$ and using Lemma \ref{l3}, we obtain 
\begin{eqnarray}
\int_0^{x-\frac{x}{\sqrt{n}}} h_n^{[\alpha]}(x,t) |g_x'(t)|dt &=& \frac{3\eta_n^2(x)}{n}\int_0^{x-\frac{x}{\sqrt{n}}}\frac{|g_x'(t)|}{(x-t)^2} dt= \frac{3\eta_n^2(x)}{n}\int_1^{\sqrt{n}} V_{x-\frac{x}{v}}^x (g_x') dv \nonumber\\&\leq & \frac{3\eta_n^2(x)}{n}\sum_{l=1}^{[\sqrt{n}]}V_{x-\frac{x}{l}}^x (g_x').
\end{eqnarray}
Hence
\begin{eqnarray}
E_{nx}\leq \frac{x}{\sqrt{n}}V_{x-\frac{x}{\sqrt{n}}}^x+\frac{3\eta_n^2(x)}{n}\sum_{l=1}^{[\sqrt{n}]}V_{x-\frac{x}{l}}^x (f_x').
\end{eqnarray}
Now another part can be written as 
\begin{eqnarray}
F_{nx}&=& \left| \int_x^\infty \left(\int_0^t g_x'(v) dv\right)u_{n}^{[\alpha]}(x,t) dt \right|\nonumber\\
&\leq & \left|\int_x^{2x} \left(\int_0^t g_x'(v) dv\right) d_t(1-h_n^{[\alpha]}(x,t))\right|+\left| \int_{2x}^\infty \left(\int_0^t g_x'(v) dv\right)u_{n}^{[\alpha]}(x,t) dt \right| \nonumber\\
&=& \left| \int_x^{2x} g_x'(v) dv (1-h_n^{[\alpha]}(x,2x))-\int_x^{2x} g_x'(t)(1-h_n^{[\alpha]}(x,t)) dt \right|\nonumber\\
&&+\left| \int_{2x}^\infty \left(\int_0^t (g'(v)-g'(x+)) dv\right)u_{n}^{[\alpha]}(x,t) dt \right|\nonumber\\
&\leq & \left| \int_x^{2x} (g'(v)-g'(x+)) dv (1-h_n^{[\alpha]}(x,2x))\right|+\left| \int_x^{2x} g_x'(t)(1-h_n^{[\alpha]}(x,t)) dt \right|\nonumber\\
&&+ \int_{2x}^\infty |(g(t)-g(x))| u_{n}^{[\alpha]}(x,t)dt +\int_{2x}^\infty |g'(x+)||(t-x)|u_{n}^{[\alpha]}(x,t)dt \nonumber\\
&\leq & \frac{3\eta_n^2(x)}{nx^2} |(g(2x)-g(x)-xg'(x+))|+\int_{x}^{x+\frac{x}{\sqrt{n}}}|g_x'(t)||(1-h_n^{[\alpha]}(x,t))| dt\nonumber\\
&& + \int_{x+\frac{x}{\sqrt{n}}}^{2x}|g_x'(t)||(1-h_n^{[\alpha]}(x,t))| dt + \int_{2x}^\infty M t^\gamma u_{n}^{[\alpha]}(x,t)dt\nonumber\\
&&+\int_{2x}^\infty |g(x)| u_{n}^{[\alpha]}(x,t)dt+ |g'(x+)| \int_{0}^\infty u_{n}^{[\alpha]}(x,t)|t-x| dt \nonumber\\
&\leq & \frac{3\eta_n^2(x)}{nx^2} |(g(2x)-g(x)-xg'(x+))|+ \int_{x}^{x+\frac{x}{\sqrt{n}}} V_x^t(g_x') dt+ \frac{3\eta_n^2(x)}{n} \int_{x+\frac{x}{\sqrt{n}}}^{2x} \frac{V_x^t(g_x')}{(x-t)^2}dt\nonumber\\
&&+M \int_{2x}^\infty t^\gamma u_{n}^{[\alpha]}(x,t)dt + |g(x)|\int_{2x}^\infty  u_{n}^{[\alpha]}(x,t)dt+ |g'(x+)| \left(\int_{0}^\infty u_{n}^{[\alpha]}(x,t)(t-x)^2 dt\right)^{\frac{1}{2}}\nonumber\\
&&\times \left(\int_{0}^\infty u_{n}^{[\alpha]}(x,t) dt\right)^{\frac{1}{2}}\nonumber\\
&\leq &  \frac{3\eta_n^2(x)}{nx^2} |(g(2x)-g(x)-xf'(x+))|+ \frac{x}{\sqrt{n}} V_x^t(f_x')+ \frac{3\eta_n^2(x)}{n} \int_{x+\frac{x}{\sqrt{n}}}^{2x} \frac{V_x^t(f_x')}{(x-t)^2}dt\nonumber\\
&&+M \int_{2x}^\infty t^\gamma u_{n}^{[\alpha]}(x,t)dt + |f(x)|\int_{2x}^\infty  u_{n}^{[\alpha]}(x,t)dt+ |f'(x+)| \sqrt{\frac{3}{n}}\eta_n(x) \nonumber\\
&\leq & \frac{3\eta_n^2(x)}{nx^2} |(g(2x)-g(x)-xg'(x+))| +\frac{x}{\sqrt{n}} V_x^t(g_x') + \frac{3\eta_n^2(x)}{n}\sum_{l=1}^{[\sqrt{nx}]} V_x^{x+\frac{x}{l}}(g_x')\nonumber\\
&&+M \int_{2x}^\infty t^\gamma u_{n}^{[\alpha]}(x,t)dt + |g(x)|\int_{2x}^\infty  u_{n}^{[\alpha]}(x,t)dt+ |g'(x+)| \sqrt{\frac{3}{n}}\eta_n(x).
\end{eqnarray}
Here, it is arising a case $t\leq 2(t-x)$ and $x\leq t-x$, when $t\geq 2x$, then by using H$\ddot{\text{o}}$lder inequality, we can obtain
\begin{eqnarray*}
F_{nx}&\leq &\frac{3\eta_n^2(x)}{nx^2} |(g(2x)-g(x)-xg'(x+))| +\frac{x}{\sqrt{n}} V_x^t(g_x') + \frac{3\eta_n^2(x)}{n}\sum_{l=1}^{[\sqrt{nx}]} V_x^{x+\frac{x}{l}}(g_x') \\
&& + M 2^{\gamma} \left(\int_{0}^\infty (t-x)^{2s} u_{n}^{[\alpha]}(x,t)dt\right)^{\frac{\gamma}{2s}}+ \frac{3|g(x)|\eta_n^2(x)}{nx^2} + |g'(x+)| \sqrt{\frac{3}{n}}\eta_n(x)\\
&=& \frac{3\eta_n^2(x)}{nx^2} |(g(2x)-g(x)-xg'(x+))| +\frac{x}{\sqrt{n}} V_x^t(g_x') + \frac{3\eta_n^2(x)}{n}\sum_{l=1}^{[\sqrt{nx}]} V_x^{x+\frac{x}{l}}(g_x') + \mathcal{M}_{s,x}^\gamma\\
&&+\frac{3|g(x)|\eta_n^2(x)}{nx^2} + |g'(x+)| \sqrt{\frac{3}{n}}\eta_n(x).
\end{eqnarray*}
Using the values of $E_{nx}$ and $F_{nx}$ in equation (\ref{in3}), we get our required result.
\end{proof}

\section{Conclusion and result discussion}
After whole discussions of the present article, it can be seen that the results are good regarding approximations for the defined operators. We discussed those properties which define the order of approximation in terms of modulus of continuity also by means of modified  Lipschitz type space. Steklov function is also one of the best useful function by which the present article deals the approximation property. Some results have been discussed in the weighted spaces which enrich the quality of our works. Quantitative approximation have been studied and asymptotic behavior of the operators, Gr$\ddot{\text{u}}$ss Voronovskaya type theorem have also been discussed quantitatively. As for supporting of the proof of convergence of the said operators, examples took place. An important property which is statistical convergence of the operators, has been established and statistical rate of convergence obtained. At last, the very beautiful property has been discussed and that is the rate of convergence the term of function with derivative of bounded variation. This research article may found to be useful in the area of analysis for the researchers. And results can be applicable in literature on Mathematical Analysis, Applied Mathematics.

\section{Applications}
As an application, the defined operators can be used in Quantum Calculus, Mathematical physics. Moreover, it can be generalized into complex number, also it can be obtained the better rate of convergence by using King's approach. A consequential research topic is approximation of the function by positive linear operators in general mathematics and it withal provides potent implements to application in areas of CAGD, numerical analysis, and solutions of differential equations. These operators can be generalized by considering hypergeometric function. It can be quite effective for the rate of convergence  while using $q$-integer in quantum calculus.

\end{document}